\def\R{{\rm\vrule depth0ex width.4pt\kern-.08em R}}
\def\reals{{\rm\vrule depth0ex width.4pt\kern-.08em R}}
\def\Prob{{\rm\vrule depth0ex width.4pt\kern-.08em P}}
\def\fpt{{\cal F}}
\newcommand{\btheta}{\mathbf{\theta}}
\newcommand{\mymap}{\psi}
\newcommand{\boldv}{v}
\newtheorem{theorem}{Theorem}
\newtheorem{lemma}[theorem]{Lemma}
\newtheorem{corollary}[theorem]{Corollary}
\newtheorem{proposition}[theorem]{Proposition}
\newtheorem{conjecture}[theorem]{Conjecture}
\newtheorem{remark}{Remark}
\newtheorem{definition}{Definition}
\newtheorem{condition}{Condition}
\begin{document}

\begin{center}
\Large{\textbf{Optimal Paths in Large Deviations of Symmetric \\  Reflected Brownian Motion in the Octant}}
\end{center}

\begin{center}
Ziyu Liang and John J. Hasenbein \\
 Graduate Program in Operations Research and Industrial Engineering \\
 Department of Mechanical Engineering \\
 University of Texas at Austin, Austin, Texas, 78712 \\
 \emph{liangzy@mail.utexas.edu, jhas@mail.utexas.edu}
 \end{center}

\begin{abstract}
We study the variational problem that arises from consideration
of large deviations for semimartingale reflected Brownian motion
(SRBM) in $\reals^3_+$. Due to the difficulty of the general problem,
we consider the case in which the SRBM has \emph{rotationally
symmetric} parameters. In this case, we are able to obtain conditions
under which the optimal solutions to the variational problem are
paths that are gradual (moving through faces of strictly increasing
dimension) or that spiral around the boundary of the octant.
Furthermore, these results allow us to provide an example for which
it can be verified that a spiral path is optimal. For rotationally symmetric
SRBM's, our results facilitate the simplification of computational methods
for determining optimal solutions to variational problems and give insight
into large deviations behavior of these processes.
\end{abstract}

\vspace{0.5cm}

\begin{center}
January 30, 2013
\end{center}

\section{Introduction and Main Results}

In this paper, we analyze the variational problem associated with the large deviations principle
for semimartingale reflected Brownian motion (SRBM) in the octant. The SRBM processes of interest arise from heavy traffic
limits of queueing network processes. Understanding the tail asymptotics of the SRBM's can aid in computing
their stationary distribution, which in turn gives insight into the behavior of the pre-limit
queueing processes.

The typical analysis of large deviations for any process can often be divided into two steps: (1) proving a
large deviations principle (LDP) and (2) analyzing the resulting variational problem. For particularly complex
variational problems, one might further subdivide step (2) into: (2a) characterizing optimal paths and
(2b) optimal path computations. Our primary interest in this paper is in step (2a), especially for SRBM's in $\reals^3_+$. To gain understanding of the difficulties of the overall investigation,
we briefly review some previous results. First, with
respect to step (1), an LDP for SRBM's in $\reals^d_+$ has only been established for special cases. For a general dimension $d$, Majewski examined the special cases of SRBM's arising from feed-forward
queueing networks \cite{maj98a} and SRBM's whose
reflection matrix is an $M$-matrix (the so-called Harrison-Reiman case)
\cite{maj98b}. Dupuis and Ramanan  \cite{dupram02} obtained an LDP for
a generalization of the Harrison-Reiman case. It should be noted that these results
still leave the LDP for $d=2$ unresolved for some parameter cases (see \cite{harhas09} for a summary). More recently, Dai and Miyazawa \cite{daimiy11} obtained exact asymptotics
for SRBM in two dimensions using moment generating functions and techniques
from complex analysis. However, the results are limited to asymptotic behavior along a ray
of the quadrant. In a related follow-up paper, Dai and Miyazawa \cite{daimiy12} provide
new insights into the results of Avram et al.\ \cite{adh99} and derive exact asymptotics for the boundary
measures of SRBM in two dimensions. 

The tasks outlined for step (2) are best explained by examining the case in two dimensions. In this setting,
Avram et al.\ \cite{adh99} and Harrison and Hasenbein \cite{harhas09} gave a complete analytical solution to the variational problem for any SRBM of interest (e.g., those
possessing a stationary distribution). The analysis was carried out in a few steps. First, three general properties
of optimal paths were established: convexity, scaling, and merging. Second, these properties were used
to conclude that only three types of optimal paths are possible. Finally, these path properties allow
the development of a complete algebraic description of the optimal paths in two dimensions.
Unfortunately, the situation in three dimensions is considerably more difficult. While the properties of convexity,
scaling, and merging still apply, they are nowhere near sufficient to characterize the possible optimal
paths. In order to attack the higher dimensional problem, we examine a special set of SRBM cases
and develop new techniques for restricting the types of paths which must be examined.

More specifically, in this paper we investigate the variational problem associated with SRBM in the positive orthant for $d=3$ in the case
in which the SRBM has either \emph{rotationally symmetric} or \emph{mirror symmetric} data (the latter is a special case of
the former). Note that neither symmetry case we analyze coincides with the much studied case of \emph{skew-symmetric} SRBM's, which
have tractable product form stationary distributions.

Our first contribution is to use the Bramson, Dai, and Harrison \cite{bdh08}
stability results to derive an appealingly simple set of stability conditions
for rotationally symmetric SRBM (see Theorem \ref{srbm_stable} in Section
\ref{sec:stable}). However, the main contribution of the paper is to clarify the
nature of optimal paths in three-dimensional variational problems and to provide
new techniques to achieve this analysis.
To best elucidate our contribution, we present
our main result now:
\begin{theorem} \label{thm:main}
Consider a rotationally symmetric variational problem, as given
in Definition \ref{def:rsvp},  arising from SRBM in $\reals_+^3$.
Suppose $\Gamma=I$ and $\theta< 0$.
Under Condition 1 in Section \ref{sec:finite}, there always exists an optimal path which is either
(a) a gradual path (a path which moves through faces of strictly increasing
dimension) or (b) a classic spiral path.
\end{theorem}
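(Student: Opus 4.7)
The plan is to begin with any optimal path $\xpath$ --- whose existence under Condition 1 is assumed --- and to transform it, without increasing its cost, into a canonical representative that is either gradual or a classic spiral. The transformation will repeatedly exploit the three-fold rotational symmetry of the data together with the convexity, scaling, and merging properties that were established in two dimensions in \cite{adh99,harhas09} and which, as noted in the introduction, still apply in the three-dimensional setting.

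First, I would record the sequence $F_1, F_2, \ldots, F_m$ of boundary faces visited by $\xpath$ and the corresponding segments between breakpoints. With $\Gamma = I$ and $\theta < 0$, the convexity property together with the Euler-Lagrange equations on each face force each segment to be a straight line whose direction is determined by the drift and the reflection vectors of that face. This reduces the optimal path to a finite combinatorial object: a face sequence together with a list of breakpoints. The merging property then allows me to prune any excursion that revisits a face, so without loss of generality each face in the pruned sequence is distinct.

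Second, I would dichotomize on the dimension profile $d_1, d_2, \ldots, d_m$ of the pruned face sequence. If the profile is strictly increasing, the path is gradual and we are done. Otherwise the path traverses at least two distinct 2-faces, and transitions between them can only occur through the shared 1-dimensional edges. The rotational symmetry of the problem constrains these transitions: the 2-face transitions must proceed in the cyclic order determined by the $120^\circ$ symmetry action, because any other pattern could be reflected through a symmetry plane to produce a strictly cheaper path by merging. Applying a symmetry rotation --- which preserves cost --- aligns these transitions into the standard winding pattern of a classic spiral.

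I expect the principal obstacle to be ruling out hybrid paths that begin by spiraling around the boundary and later detach into the interior through a non-gradual transition, or that interleave spiral and gradual segments. To handle these, I plan to invoke Condition 1 of Section \ref{sec:finite} to ensure that each segment cost is finite and strictly positive, and then to use a strict convexity or exchange argument to show that any such hybrid can be cost-improved either by completing the spiral all the way to the origin or by truncating it into a gradual tail. The delicate step is identifying the precise angular thresholds --- set by the drift, the reflection directions, and the symmetry axis --- at which a spiral segment becomes preferable to, or worse than, escape into the interior; this case analysis is where the real technical work of the proof resides.
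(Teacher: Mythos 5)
Your plan has genuine gaps at the points where the real work of the paper lies. First, the ``pruning'' step is not valid: the merging property is only a concatenation/feasibility tool, and it does not let you delete excursions that revisit a face without increasing cost. Worse, insisting that the pruned face sequence contain each face at most once is incompatible with conclusion (b): a classic spiral necessarily revisits the three faces (and axes) infinitely often, so a finite sequence of distinct faces can never produce the spiral alternative you are trying to reach. The paper's actual dichotomy is on the number of linear segments: for finitely many segments it shows (Theorems \ref{thm:gradual} and \ref{thm:supergradual}) that a gradual optimal path exists, by classifying endpoint sequences over the alphabet $\{SF,DF,A,O\}$ and eliminating the bad ones via the Bad Faces Theorem \ref{theorem:shortest}, the Switchback Lemma \ref{lem:switchback}, Lemma \ref{axis_eliminate} and Lemma \ref{DFO}; for countably infinitely many segments it shows (Theorem \ref{thm:ex}) that only the classic spiral survives, using the parallel-segment Lemma \ref{parallel}, Lemma \ref{thm:newspiral}, and a self-similarity/patching argument. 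None of these eliminations follows from ``convexity, scaling, and merging'' alone, and your symmetry argument cannot substitute for them: rotationally symmetric data admits only the cyclic $120^\circ$ rotations, not reflections through a symmetry plane (that would require $r_1=r_2$, the mirror symmetric case), so the claim that any non-cyclic transition pattern ``could be reflected \dots to produce a strictly cheaper path'' is unavailable.

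Second, your use of Condition 1 is misdirected. Condition 1 is not about finiteness or positivity of segment costs; it is the specific algebraic inequality in $(r_1,r_2)$ that makes Lemma \ref{lem:case3}(a) work, i.e., that the one-piece reflected cost to a point $v=(v_1,0,\bar v_3)$ in a face dominates the reflected cost to a point on the axis, which is exactly what lets Lemma \ref{DFO} rule out optimal paths whose last boundary piece emanates from a ``different face'' (the $(DF,O)$ endings). Without that input the finite-segment classification collapses, and your closing paragraph essentially concedes that the hybrid/exotic cases --- which are the content of Section \ref{sec:exotic} --- are left unproved. So while your high-level intention (normalize an optimal path using symmetry, convexity, scaling, merging, then dichotomize) points in the right general direction, the decomposition you chose (distinct-face pruning plus symmetry alignment) would fail, and the steps that actually distinguish gradual paths and classic spirals from switchbacks, $(F,DF,A,O)$-type paths, and exotic spirals are missing.
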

Theorem \ref{thm:supergradual} establishes part (a)
and Theorem \ref{thm:ex} establishes part (b).
 An important consequence of this result is the following:
\begin{corollary}
For the variational problem arising from SRBM in the octant, there exists
an example of an optimal spiral path.
\end{corollary}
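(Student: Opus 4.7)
The plan is to deduce the corollary as an immediate consequence of Theorem \ref{thm:main}: since that theorem provides a dichotomy between gradual and classic spiral optima (under Condition~1), to exhibit an example with a spiral optimum it suffices to construct a single rotationally symmetric SRBM satisfying the hypotheses of Theorem \ref{thm:main}, together with a target point $x \in \reals^3_+$, such that every gradual path to $x$ has strictly larger variational cost than some spiral path to $x$. The two horns of the dichotomy then collapse to the spiral one, giving existence.

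Concretely I would proceed in three steps. First, fix a one-parameter family of rotationally symmetric data --- for instance a drift $\theta = -\lambda(1,1,1)^{\top}$ with $\lambda>0$ and a reflection matrix in the normalized form that yields $\Gamma=I$ --- and use the simple criterion of Theorem \ref{srbm_stable} to pin down the range of the single free reflection parameter for which stability, $\theta<0$, and Condition~1 all hold. Second, pick a target point on the main diagonal of the octant so that the rotational symmetry acts trivially; a classic spiral path to that target is then determined by a single pitch parameter, and its variational cost reduces to a one-dimensional minimization yielding an explicit value $C_s$. Third, enumerate the finitely many face-sequence types available to a gradual path in three dimensions (reduced further by the threefold symmetry) and, using the convexity, scaling, and merging properties recalled in the introduction, reduce each type to a finite-dimensional convex minimization; the minimum across these types gives an explicit gradual cost $C_g$ at the same target.

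The main obstacle lies in the final comparison: tuning the reflection parameter so that $C_s < C_g$ strictly, and doing so globally, in the sense of beating every face-sequence type and every choice of intermediate points, not just a generic candidate. I expect this to require pushing into a regime in which the reflection vectors press strongly into the interior, so that motion along the lower-dimensional boundary faces is cheap relative to motion in the interior, rewarding a path that hugs the boundary and spirals before reaching $x$. Once such a parameter is identified and the inequality $C_s < C_g$ is verified in closed form, Theorem \ref{thm:main} immediately forces an optimal path to be a spiral, yielding the desired example. Note that the structural work has already been carried out by Theorem \ref{thm:main}; what remains is purely quantitative, but it is precisely this quantitative comparison, carried out uniformly over all gradual competitors, that I expect to be the most delicate piece of the argument.
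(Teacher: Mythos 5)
Your overall strategy---exhibit one concrete RSVP satisfying the hypotheses of Theorem \ref{thm:main} and show the gradual alternative is strictly beaten, so the dichotomy forces a spiral optimum---is the same as the paper's, but your choice of terminal point creates a genuine gap. You propose a target in the \emph{interior} of the octant on the main diagonal. Two problems follow. First, a classic spiral path terminates on an axis, and the machinery behind Theorem \ref{thm:main} for paths with infinitely many segments (Theorem \ref{thm:ex}) is proved for axis terminal points; for an interior target the ``spiral'' horn of the dichotomy is not literally a spiral ending at your point, so beating every gradual path there does not by itself deliver ``an optimal spiral path'' without further argument. Second, the diagonal point is the worst place to look: by symmetry it sits squarely where the direct interior path is strongest, and your plan then requires a global comparison of a spiral-plus-continuation cost against \emph{every} gradual face-sequence type (one-piece direct, two-piece via a face, three-piece via axis--face--interior), each minimized over its intermediate points, with parameters tuned so the strict inequality holds uniformly. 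You correctly flag this comparison as the delicate part, but it is exactly the work the paper's proof is designed to avoid.

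The paper's route: take the terminal point \emph{on an axis}, namely $e_3$. Since a gradual path moves through faces of strictly increasing dimension, the only gradual candidate to an axis point is the single segment along that axis (Theorem \ref{thm:gradual}(i)), so the entire ``uniform over all gradual competitors'' problem collapses to one inequality. For the explicit data $\theta=(-1,-1,-1)'$, $\Gamma=I$, $r_1=3/2$, $r_2=0$ (Condition 1 holds because $r_2=0$ reduces it to $(1+r_1^2)^2\ge 0$), one checks via the formulas of El Kharroubi et al.\ that the axis is genuinely reflective (both reflection vectors are used, so the axis-path cost formula applies), computes $\tilde{\mathcal{I}}_{1,2}(e_3)\approx 0.4211$, and exhibits the feasible two-piece competitor through $u=(0.5,0,0)$ with cost $\approx 0.3317$. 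This single strict inequality rules out the gradual horn, so Theorem \ref{thm:main} forces the optimum at $e_3$ to be a classic spiral, which is then computed explicitly (shrink factor $k^*\approx 0.5363$, cost $\approx 0.2384$). To repair your proposal, replace the diagonal interior target by an axis point and note the degeneration of the gradual class there; the reflectivity check and the one numerical comparison are then all that remain.
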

This follows from Theorem \ref{thm:main} and the calculations in
Section \ref{sec:optspiral}.
To the best of our knowledge, this is the first time a spiral path has been shown to be
optimal for this type of variational problem. Condition 1 is somewhat complicated
and we discuss it in detail later. The most important restriction it imposes is
that the SRBM must have
reflection vectors which point ``outward.'' We believe that this condition can be relaxed. Note that the negativity condition
on the drift $\theta$ is not restrictive, since it is equivalent in our case to requiring
stability of the associated SRBM. 

An implication of these results is that they  provide the basis for tractable numerical methods for
computing optimal paths in three dimensions. Complementary to our work is a recent paper by
El Kharroubi et al.\ \cite{kytb11}, which provides some
algebraic results for paths in three dimensions. However, most of the results in \cite{kytb11} require \emph{a priori} elimination
of certain optimal path types, which we are able to provide. 
Important related computational
methodology appeared in Majewski \cite{maj98b}. If one fixes the maximum number of segments
in the search for an optimal path, Majewski's branch-and-bound algorithm can efficiently produce
the desired path.
Finally, Farlow \cite{far13} also investigates variational problems arising from
rotationally symmetric SRBM. In particular, she provides evidence that 
spiral paths cannot be optimal unless $r_1 > 1$ or $r_2> 1$. Furthermore, 
her arguments, when combined with our results, indicate that there are always spiral optimal paths in such cases. 

Our hope is that the path properties we establish can be extended beyond the symmetry cases. However, it should be noted
that the general case in three dimensions is already known to be fairly complex
and in fact our main results do not hold for all parameter cases in three dimensions.
 In \cite{dupram02}, the authors show that an optimal path to a point in the interior of the octant
may have up to five linear pieces, implying that a simple characterization of paths in the general $d=3$ case is
non-trivial. This five-piece path is depicted in Figure \ref{fig:kavita} (in the figures in this paper, dotted lines
indicate a segment contained in the interior of the octant).
\begin{figure}
\begin{centering}
\includegraphics[width=7cm]{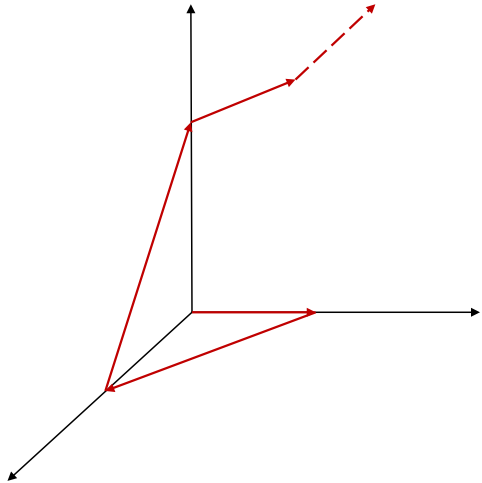}\\
\caption{An Optimal Path with Five Pieces} \label{fig:kavita}
\end{centering}
\end{figure}
Nonetheless, we believe that deriving new properties of optimal paths for special cases
is a necessary building block for solving other variational problems, and provides for a better understanding of the tail asymptotics for SRBM.

This paper is structured as follows. In Section \ref{sec:vp} we introduce the
variational problem (VP) analyzed throughout the paper. This problem arises from
studying large deviations of SRBM
in the orthant, concepts that are described in Section \ref{sec:srbm}.
Section \ref{sec:rsvp} introduces the symmetric cases of the SRBM
and VP which are of interest to us. In Section \ref{sec:stable}, we use the
framework of Bramson et al.~\cite{bdh08} to derive the stability conditions
of symmetric SRBM. The next three sections
characterize the nature of (piecewise linear) optimal paths with a finite
number of segments. Section \ref{sec:exotic} is devoted to paths with
an infinite number of pieces and it is demonstrated that only classic spiral
paths can be optimal. Finally, in Section \ref{sec:optspiral} we provide
an example of a spiral path that is indeed optimal.

\section{The Variational Problem} \label{sec:vp}

In this section, we define the variational problem of interest in this paper.
First, we give notation and definitions which follow as closely as possible
to those given in Avram et al. \cite{adh99}.

Let $d\ge 1$ be an integer and $\theta$ a constant vector in $\R^d$.
Also, $\Gamma$ is a $d\times d$
symmetric and strictly positive definite matrix,
and $R$ is a $d\times d$ matrix. The triple $(\theta, \Gamma, R)$ provides
the data to variational problems and, as described later, associated reflected
Brownian motion processes.
Throughout the paper, all vector inequalities should
be interpreted componentwise and all vectors are assumed to be column
vectors. Finally, for vectors $v\in\R^d$ and $w\in \R^d$ we define the inner
product
$$ \langle v, w \rangle = v'\Gamma^{-1} w$$
and the associated norm $||v||=\sqrt{\langle v, v \rangle}$.

In order to more easily define the
VP, we first introduce the Skorohod problem associated with the matrix $R$.
Thus, let $C([0,\infty), \reals^d)$ be
the set of continuous functions  $x: t\in [0,\infty)\to x(t)\in
\reals^d$. A function $x\in C([0,\infty), \reals^d)$ is called a path
and is often denoted by $x(\cdot)$.
%The space $C([0,\infty),
%\reals^d)$ is endowed with a topology in which convergence means
%uniform convergence in each finite interval.
We now define the Skorohod problem associated with a reflection matrix $R$.

\begin{definition}[The Skorohod Problem]
  Let $x$ be a path. An $R$-regulation of $x$ is a pair of paths
$(z, y)\in C([0,\infty), \reals^d)\times C([0,\infty), \reals^d)$
such that
\begin{eqnarray}
  \label{eqn:s1}
 &&  z(t) = x(t) + R \, y(t), \quad t\ge 0, \\
 &&  z(t) \ge 0, \quad t\ge 0, \\
 && y(\cdot) \mbox{ is non-decreasing}, \quad y(0) = 0, \label{eqn:s3}\\
 && \int_0^{\infty} z_i(s) \, dy_i(s) = 0,
\quad i = 1, \ldots, d.
\end{eqnarray}
\end{definition}

When the $R$-regulation $(y, z)$ of $x$ is unique for each $x\in
C([0,\infty), \reals^d)$, the mapping $$\psi: x\to \psi(x)=z$$
is called the reflection mapping from $C([0, \infty), \reals^d)$ to
$C([0, \infty), \reals^d_+)$.
%% When the $R$-regulation of $x$ is not
%% unique, we use $\psi(x)$ to denote the set of all $z$ which are
%% components of an $R$-regulation $(y,z)$ of $x$.
When the triple
$(x, y, z)$ is used, it is implicitly assumed that $(y, z)$ is an
$R$-regulation of $x$.

An important issue when defining the Skorohod problem is whether a solution exists
for any given path $x$. If the reflection matrix $R$ is completely-$\cal{S}$,
as defined below, then indeed there is a solution for every $x$ with
$x(0)\ge 0$ (see Bernard and El Kharroubi~\cite{berelk90}).

\begin{definition}
\label{def:completely-S}
A $d\times d$ matrix   $R$ is said to be an \emph{$\mathcal{S}$-matrix} if
there exists  a
$u > 0$ such that $Ru>0$. The matrix $R$ is \emph{completely-$\mathcal{S}$}
if each principal submatrix of $R$ is an $\mathcal{S}$-matrix.
\end{definition}

The class of $\mathcal{P}$-matrices, defined below, also plays an important role in the
development of SRBM theory and associated variational problems.

\begin{definition}
\label{def:p-matrix}
A $d\times d$ matrix   $R$ is said to be a \emph{$\mathcal{P}$-matrix} if
all of its principal minors are positive.
\end{definition}

In addition to the issue of existence of solutions to the Skorohod problem,
there is also the matter of the uniqueness of the solution, for a given
path $x$. It is useful when defining the VP to have a notational
convention which applies when solutions are not unique. To this end,
we assume that if the Skorohod
problem is non-unique, then $\mymap(x)$ represents a set of paths (solutions)
corresponding to $x$. Furthermore, the expression $$\mymap(x) (T) = \boldv$$
indicates that there exists a $z\in \mymap(x)$ such that
$z(T) = \boldv $.

We now define the variational problem studied in this paper.
\begin{definition}[The Variational Problem]
\begin{equation} \label{eqn:varprob}
 I(\boldv) \equiv \inf_{T\ge 0} \inf_{x \in \mathcal{H}^d,
\mymap(x(\cdot))
(T) = \boldv} \; \frac{1}{2}
\int_0^T || \dot{x}(t) - \btheta||^2 \, dt
\end{equation}
where $\mathcal{H}^d$ is the space of all absolutely
continuous functions $ x(\cdot) : [0,\infty) \rightarrow
\reals^d$ which have square integrable derivatives
on bounded intervals and have $x(0)=0$.
\end{definition}

\begin{definition} \label{def:vp}
Let $v\in \R^d_+$.
If a given triple of paths $(x,y,z)$ is such that the triple satisfies
the Skorohod problem,
$z(T)=v$ for some $T\ge 0$, and
$$
\frac{1}{2} \int_0^T || \dot x(t) - \theta|| ^2\, dt = I(v),$$
then
we will call $(x,y,z)$ an \emph{optimal triple}, for VP
(\ref{eqn:varprob}), with optimal value
$I(v)$. The function
% $x$ is called an \emph{optimal path} if it is the
%first member of an optimal triple and
$z$ is called an \emph{optimal path} if it is the last member of an optimal triple.
Such a triple $(x, y, z)$ is also  sometimes referred to as a \emph{solution} to
the VP (\ref{eqn:varprob}). $T$ is called the \emph{optimal time} for
such a solution.
\end{definition}

\section{SRBM and Large Deviations Background}  \label{sec:srbm}

\subsection{Semi-martingale Reflected Brownian Motion}

We now define the semi-martingale reflected Brownian motion (SRBM) on the
positive orthant associated
with the data $(\theta, \Gamma, R)$. Let $\mathcal{B}$ denotes the $\sigma$-algebra of Borel subsets
of $\R^d_+$.  A triple
$(\Omega , \fpt , \{\fpt_t\} )$ is called a \emph{filtered space}
if $ \Omega $ is a set, $ \fpt $ is a $\sigma$-field of subsets of
$\Omega $, and $\{\fpt _t\} \equiv \{ \fpt _t, t \geq 0\}$ is an
increasing family of sub-$\sigma$-fields of $\fpt $, i.e., a
filtration. If, in addition, $\Prob$ is a probability measure on $
(\Omega, \fpt )$, then $ (\Omega, \fpt , \{ \fpt _t \} , \Prob )$ is
called a filtered probability space.

\begin{definition}[SRBM] \label{def:srbm}
Given a probability measure $\nu$ on $(\R^d_+, \mathcal{B})$,
 a {\em semi-martingale reflecting Brownian motion\/}
   associated with the data
 $(\theta,\Gamma,R, \nu)$
 is an $\{{\cal F}_t\}$-adapted,
  $d$-di\-men\-sion\-al process $Z$ defined on some filtered
  probability space $(\Omega, {\cal F}, \{{\cal F}_t\}, \Prob_\nu)$ such
  that
\begin{enumerate}[ (i)]
\item $\Prob_\nu$-a.s., $Z$ has continuous paths and $Z(t)\in \R^d_+$ for
  all $t\ge 0$,
\item $Z=X+RY$, $\Prob_\nu$-a.s.,
\item under $\Prob_\nu$,
  \begin{enumerate}[ (a)]
  \item $X$ is a $d$-dimensional Brownian motion with drift vector
    $\theta$, covariance matrix $\Gamma$ and $X(0)$ has distribution
    $\nu$,
  \item $\{X(t)-X(0)-\theta t, \mathcal{F}_t, t\ge 0\}$ is a
    martingale,
  \end{enumerate}
\item $Y$ is an $\{\mathcal{F}_t\}$-adapted, $d$-dimensional process
  such that $\Prob_\nu$-a.s.\ for each $j=1, \ldots, d$,
  \begin{enumerate}[ (a)]
  \item $Y_j(0)=0$,
  \item $Y_j$ is continuous and non-decreasing,
  \item $Y_j$ can increase only  when $Z$ is on the face $F_j\equiv
    \{x\in\R^d_+: x_j=0\}$, \\i.e., $\int_0^\infty Z_j(s)\, dY_j(s)=0$.
  \end{enumerate}
\end{enumerate}
An SRBM associated with the data $(\R^d_+, \theta, \Gamma
,R)$ is an $\{{ \cal F }_t\}$-adapted, $d$-dimensional process $Z$
together with a family of probability measures $\{ \Prob_x, x\in \R^d_+\}$
defined on some filtered space $(\Omega, \fpt , \{\fpt_t\})$ such that,
for each $x\in \R^d_+$, (i)-(iv) hold
with $P_{\nu} = P_x$ and $\nu$ being the point distribution at $x$.
\end{definition}

Recall that the parameters $\theta$, $\Gamma$ and $R$ are
called the \emph{drift vector}, \emph{covariance matrix}
and  \emph{reflection matrix} of the SRBM, respectively.
The results of Reiman and Williams~\cite{reiwil88} and
Taylor and Williams~\cite{taywil93} imply that the necessary
and sufficient conditions for the existence the SRBM is
that $R$ is completely-$\mathcal{S}$.

The measure $\nu$ on $(\R^d_+, \mathcal{B})$ is a stationary distribution for an
SRBM $Z$ if for each $A\in \mathcal{B}$,
\begin{equation}
  \label{eq:sd}
  \nu(A) = \int_{\R^d_+} \Prob_x\{ Z(t)\in A \} \, \nu(dx) \quad
  \mbox{for each } t\ge 0.
\end{equation}
When $\nu$ is a stationary distribution, the process $Z$ is stationary
under the probability measure $\Prob_\nu$. In our discussion below, we are
concerned only with the (unique) stationary distribution for the SRBM
with data $(\theta, R, \Gamma)$ and therefore we drop the $\nu$ notation.

\subsection{Large Deviations}  \label{sec:ldp}

The motivation for studying the variational problem
introduced in Section \ref{sec:vp} comes from the theory of
large deviations.
For SRBM's in $\reals_+^d$, we have the
following statement of the large deviations principle,
which has only been established for some special cases,
as noted in the introduction.

\begin{conjecture}[General Large Deviations Principle] \label{thm:gldp}
Consider an SRBM $Z$ with data
$(\theta, \Gamma, R)$. Suppose that $R$ is a
completely-$\mathcal{S}$ matrix and that there exists a
probability measure $\Prob_{\pi}$ under which
$Z$ is stationary. Then for every measurable $A \subset \reals^d_+$
\begin{equation} \label{eqn:conj1}
 \limsup_{u \to \infty} \frac{1}{u} \log
\Prob_\pi(Z(0) / u \in A) \; \leq \; - \inf_{v\in A^c} I(v)
\end{equation}
and
\begin{equation} \label{eqn:conj2}
\liminf_{u \to \infty} \frac{1}{u} \log
\Prob_\pi(Z(0) / u \in A) \; \geq \; - \inf_{v\in A^o} I(v)
\end{equation}
where $A^c$ and $A^o$ are respectively the closure
and interior of $A$.
\end{conjecture}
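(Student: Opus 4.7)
\noindent The plan is to deduce this stationary LDP from a sample-path LDP for the SRBM, following the Freidlin--Wentzell programme combined with the Skorohod reflection map. First I would establish, for each fixed starting point $x\in\R^d_+$, that the rescaled process $Z_u(\cdot):=Z(u\,\cdot)/u$ satisfies a large deviations principle on $C([0,T], \R^d_+)$ under $\Prob_x$, with candidate rate function
\[
J_T(z) = \inf\left\{ \tfrac{1}{2}\int_0^T \|\dot{x}(t) - \btheta\|^2\, dt \ :\ (x,y,z) \text{ solves the Skorohod problem on } [0,T] \right\},
\]
and $J_T(z)=+\infty$ when no such $x$ exists. When $R$ is completely-$\mathcal{S}$ and the Skorohod map $\mymap$ is single-valued and Lipschitz (as in the Harrison--Reiman setting of \cite{maj98b} or the generalization of \cite{dupram02}), this step is essentially mechanical: apply Schilder's theorem to the driving Brownian motion with drift $\btheta$ and invoke the contraction principle through $\mymap$. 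For general completely-$\mathcal{S}$ $R$ one cannot proceed that cleanly, because $\mymap$ may be multi-valued or only H\"older continuous; in that case I would have to prove both bounds directly by patching together local exit/entrance time estimates.

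Next I would transfer the path-level LDP to the stationary distribution. Stationarity gives $Z(0) \stackrel{d}{=} Z(uT)$ under $\Prob_\pi$ for every $T\ge 0$, so
\[
\Prob_\pi\{Z(0)/u \in A\} = \int_{\R^d_+} \Prob_x\{ Z_u(T) \in A \}\, \pi(dx).
\]
Applying the path-level LDP to the integrand yields $\Prob_x\{Z_u(T)\in A\} \approx \exp\!\big(-u\, \inf_{z(0)=x,\,z(T)\in A} J_T(z)\big)$. Optimizing over $T\ge 0$, and using that $I$ vanishes at the origin and that $\pi$ itself is concentrated near $0$ in the large deviations sense, the dominant contribution comes from initial points in a shrinking neighbourhood of the origin, producing $\inf_{v\in A} I(v)$ on the exponential scale. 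Making this rigorous requires exponential tightness of $\pi$, which I would obtain from a quadratic Lyapunov function whose existence is guaranteed by the stability framework of Bramson, Dai, and Harrison \cite{bdh08} invoked in Section~\ref{sec:stable}.

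The hard part will be the sample-path LDP itself in the non-Lipschitz regime: when $\mymap$ is discontinuous or merely set-valued, the usual contraction argument fails, the candidate rate function must be shown to have compact level sets by hand, and the lower bound requires constructing near-optimal controls that respect the Skorohod conditions all the way to the boundary. A secondary but subtle obstacle is the ``zero-rate pinning'' used to replace the general stationary starting point $x$ by $x=0$; this needs uniform-in-$x$ control of $I$ near the origin, typically supplied by coupling or regeneration arguments rather than coming for free from exponential tightness. Both difficulties explain why the statement stands as a conjecture and why the remainder of the paper focuses on step (2a)---characterising optimal paths assuming the LDP---rather than on deriving the LDP itself.
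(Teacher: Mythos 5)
There is nothing to compare your proposal against: the statement is presented in the paper as Conjecture \ref{thm:gldp}, not a theorem, and the authors offer no proof. They explicitly note that the LDP has been established only in special cases (feed-forward networks and the Harrison--Reiman $M$-matrix case by Majewski \cite{maj98a,maj98b}, and the Dupuis--Ramanan generalization \cite{dupram02}), and that even for $d=2$ some parameter cases remain open. The rest of the paper takes the conjecture as motivation and works only on the variational problem.

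Judged on its own terms, your write-up is a programme rather than a proof, and the gaps sit exactly where you flag them, so they are genuine and not merely technical polish. The Schilder-plus-contraction step requires the reflection map $\mymap$ to be single-valued and continuous on path space; for a general completely-$\mathcal{S}$ matrix $R$ the Skorohod problem can have multiple solutions (the paper's own notational convention treats $\mymap(x)$ as a set), so the contraction principle simply does not apply, and proving the upper and lower bounds ``directly by patching together local exit/entrance estimates'' is precisely the unsolved part -- you assert it as a to-do item without an argument. The transfer to the stationary measure has a second unproved ingredient: the paper's rate function $I$ is defined with $x(0)=0$, so replacing the $\pi$-distributed starting point by the origin needs a uniform-in-$x$ comparison of costs near $0$ (your ``zero-rate pinning''), and exponential tightness of $\pi$ -- itself not established here for general stable SRBM data beyond citing stability -- does not supply that uniformity. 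Until those two steps are carried out, inequalities (\ref{eqn:conj1}) and (\ref{eqn:conj2}) remain conjectural, which is exactly the status the paper assigns them.
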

The specific connection between this LDP statement and the VP is that
the function $I(\cdot)$ appearing above is the same function which appears
in Definition \ref{def:vp}.

\section{Symmetric SRBM}   \label{sec:rsvp}

In this paper, we study solutions to the variational problem associated with the LDP
introduced in the previous section. The three-dimensional case is considerably more
difficult than the two-dimensional case and thus we confine our study to SRBM's with
some symmetry in the data. The special cases we study are called \emph{rotationally symmetric}
and \emph{mirror symmetric} and are defined below. These symmetries, while restrictive, provide
a considerable simplification of the analysis.

\begin{definition} \label{def:rsvp}
For $d=3$ the data $(\theta, \Gamma, R)$ is said to be \textbf{rotationally symmetric} if all of the following
three conditions hold:
\begin{enumerate}
\item $R$ has the form
$$ R=
                                  \left(
                                  \begin{array}{ccc}
                                   1     & r_{2} & r_{1}  \\
                                   r_{1} & 1     & r_{2}  \\
                                   r_{2} & r_{1} & 1
                                   \end{array}
                                   \right).$$
\item The drift has the form $\theta =(\theta_{0},\theta_{0},\theta_{0})'$.

\item The covariance matrix has the form
$$    \Gamma=              \left(\begin{array}{ccc}
                                                         \sigma^{2}     & \rho\sigma^{2} & \rho\sigma^{2} \\
                                                         \rho\sigma^{2} & \sigma^{2}     & \rho\sigma^{2} \\
                                                         \rho\sigma^{2} & \rho\sigma^{2} & \sigma^{2}
                                                         \end{array}
                                                         \right),
                                                         $$
where $-1 < \rho < 1$.
\end{enumerate}
\end{definition}
Some statements in the rest of the paper relate only to $R$ and in this case we call $R$ alone rotationally symmetric if and only if
$R$ has the form given in the definition above. We employ a similar convention for $\Gamma$.

\begin{definition}
For $d=3$ the data $(\theta, \Gamma, R)$ is said to be \textbf{mirror symmetric} if it is rotationally
symmetric and in addition $r_{1} = r_{2}$.
\end{definition}

Notice that the rotationally symmetric $\Gamma$ matrix also appears to be mirror symmetric.
Since covariance matrices are by definition symmetric (in the standard matrix algebra sense),
there is no sensible way to define a rotationally symmetric $\Gamma$ which is not also mirror
symmetric. For a rotationally symmetric $\Gamma$ we have the following result, proved in the
Appendix, which
will be used in demonstrating optimal path properties.
\begin{lemma} \label{lemma:matrix}
If $\Gamma$ is rotationally symmetric, then $$\Gamma^{-1}=\sigma^{-2}\left(
                                                                     \begin{array}{ccc}
                                                                       \gamma_{0} & \gamma_{1} & \gamma_{1} \\
                                                                       \gamma_{1} & \gamma_{0} & \gamma_{1} \\
                                                                       \gamma_{1} & \gamma_{1} & \gamma_{0} \\
                                                                     \end{array}
                                                                   \right),$$
with $\gamma_{0} > \gamma_{1}.$
\end{lemma}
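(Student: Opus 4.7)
The key observation is that the rotationally symmetric covariance matrix can be written as
$$\Gamma = \sigma^2\bigl[(1-\rho)I + \rho J\bigr],$$
where $J$ is the $3\times 3$ all-ones matrix. This decomposition is the basis of the whole argument: $\Gamma$ lies in the (commutative) algebra spanned by $I$ and $J$, and inverses within that algebra stay within that algebra. So my plan is to guess the form $\Gamma^{-1} = \sigma^{-2}(aI + bJ)$ and then identify $\gamma_0 = a+b$ and $\gamma_1 = b$ after reading off the diagonal and off-diagonal entries.

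First I would verify by direct multiplication that any matrix of the form $aI + bJ$ commutes with $\Gamma$, and that $J^2 = 3J$, so $(aI + bJ)(cI + dJ) = acI + (ad + bc + 3bd)J$. Setting this product equal to $I$ with $c = 1-\rho$ and $d = \rho$ yields a $2\times 2$ linear system in $(a,b)$: $a(1-\rho) = 1$ and $a\rho + b(1-\rho) + 3b\rho = 0$, giving $a = 1/(1-\rho)$ and $b = -\rho/[(1-\rho)(1+2\rho)]$. Hence
$$\gamma_0 = a+b = \frac{1+\rho}{(1-\rho)(1+2\rho)}, \qquad \gamma_1 = b = \frac{-\rho}{(1-\rho)(1+2\rho)}.$$
(Note that $1+2\rho > 0$ is required for $\Gamma$ to be strictly positive definite, so the denominator is nonzero and positive; this is consistent with the standing assumption that $\Gamma$ is positive definite.)

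Finally, to verify the inequality $\gamma_0 > \gamma_1$, I would simply subtract:
$$\gamma_0 - \gamma_1 = \frac{1+\rho}{(1-\rho)(1+2\rho)} + \frac{\rho}{(1-\rho)(1+2\rho)} = \frac{1+2\rho}{(1-\rho)(1+2\rho)} = \frac{1}{1-\rho},$$
which is strictly positive because $\rho < 1$. There is no serious obstacle here; the only subtlety is to confirm that the necessary positivity condition on $\rho$ (beyond the stated $-1 < \rho < 1$) is implied by strict positive definiteness of $\Gamma$, so that the closed-form expressions for $\gamma_0$ and $\gamma_1$ are well defined. Alternatively, one could bypass the algebra of $I$ and $J$ entirely by invoking the cyclic permutation symmetry of $\Gamma$: since $\Gamma$ is invariant under the permutation $(1,2,3)\to(2,3,1)$, so is $\Gamma^{-1}$, forcing the three diagonal entries to agree and the six off-diagonal entries to agree; then one determines the two unknown entries by equating the $(1,1)$ and $(1,2)$ entries of $\Gamma \Gamma^{-1}$ to $1$ and $0$ respectively.
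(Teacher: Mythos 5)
Your proposal is correct, and the computation checks out ($a=1/(1-\rho)$, $b=-\rho/[(1-\rho)(1+2\rho)]$, hence $\gamma_0-\gamma_1=a=1/(1-\rho)>0$), but it takes a genuinely different route from the paper. The paper does not compute $\Gamma^{-1}$ explicitly: it posits the symmetric form, observes that the nine equations in $\Gamma\Gamma^{-1}=I$ are then consistent and invokes uniqueness of the inverse (essentially your closing remark about permutation invariance), and then extracts only the two relations $\gamma_0+2\rho\gamma_1=1$ and $\gamma_1+\rho(\gamma_0+\gamma_1)=0$, from which it derives $2\gamma_1^2=(\gamma_0-1)(\gamma_0+\gamma_1)$ and rules out $\gamma_0\le\gamma_1$ by a four-case contradiction argument that uses only $\rho<1$. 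Your decomposition $\Gamma=\sigma^2[(1-\rho)I+\rho J]$ with $J^2=3J$ buys closed-form expressions for $\gamma_0$ and $\gamma_1$, reduces the inequality to a one-line subtraction, and even yields the quantitative identity $\gamma_0-\gamma_1=1/(1-\rho)$; the price is that your formulas need $1+2\rho>0$, which, as you correctly note, is not in the stated range $-1<\rho<1$ but is supplied by the standing strict positive definiteness of $\Gamma$ (indeed, at $\rho=-1/2$ the inverse does not exist, so this hypothesis is implicitly needed by the lemma in any case). The paper's implicit case analysis sidesteps the explicit dependence on $1+2\rho$ at the cost of a longer argument; your version is more direct and arguably more informative.
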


Some readers may also be familiar with the skew-symmetry condition (see \cite{harwil87a,harwil87b})
which is
\begin{equation}  \label{eqn:skewsym}
2\Gamma = R D^{-1} \Lambda + \Lambda D^{-1} R^{\prime},
\end{equation}
where $D = \mathrm{diag}(R)$ and $\Lambda = \mathrm{diag}(\Gamma)$.
This condition is necessary and sufficient for the stationary density function of the
SRBM to admit a separable, exponential form. Our notions of symmetry do not coincide in any meaningful
way with the notion of skew-symmetry. It can be checked that rotationally symmetric SRBM data
is also skew-symmetric if and only if $r_1+r_2 = 2 \rho$.

In subsequent sections, we provide results for both SRBM and the associated variational problems.
Thus, for an SRBM with rotationally symmetric data we use the abbreviation RS-SRBM. Similarly, for
an SRBM with mirror symmetric data we use MS-SRBM. The associated variational problems take the
same data and when stating results for VPs we use the abbreviations RSVP and MSVP.

\section{SRBM Stability Conditions} \label{sec:stable}

For SRBM in three dimensions Bramson et al.~\cite{bdh08} obtained results which, in addition to
previous results, give a complete characterization
of existence and stability of SRBM. This characterization is summarized in
Figure \ref{fig:stable}. The results of this section specialize their results for RS-SRBM. First, however,
we need to define a few terms appearing in the figure.

\begin{figure}
\begin{centering}
\includegraphics[width=9cm]{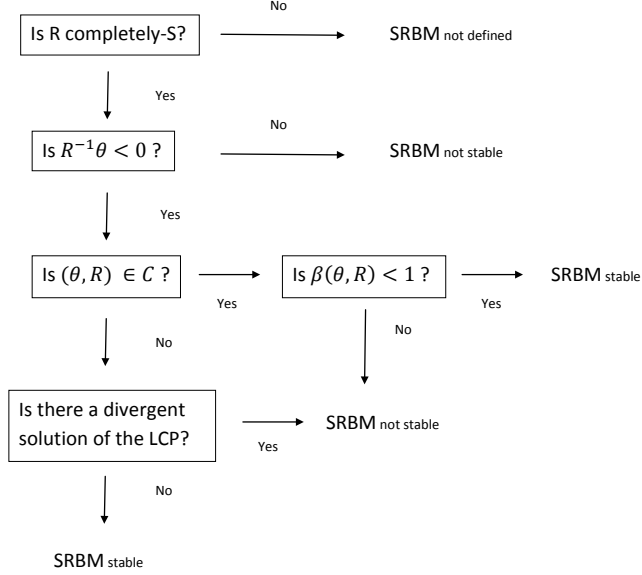}\\
\caption{Existence and Stability of SRBM in the Octant} \label{fig:stable}
\end{centering}
\end{figure}

We define the solutions to the \emph{linear complementarity problem} (LCP) in dimension $d$ as follows
(see \cite{cottle} for background). Vectors $u,v \in \reals^d$ comprise a solution to the LCP if
\begin{eqnarray*}
u, v & \ge & 0 \\
v & = & \theta + Ru \\
u \cdot v & = & 0.
\end{eqnarray*}

Using the terminology in \cite{bdh08} a solution $(u,v)$ to the LCP is called
\emph{stable} if $v=0$ and the solution is called \emph{divergent} otherwise.
The existence or non-existence of a solution to the LCP must be checked in the
bottom decision point in Figure \ref{fig:stable}.

Bramson et al.~\cite{bdh08} also define various subsets of the data pairs $(\theta, R)$
which relate to the third line of decision points in Figure \ref{fig:stable}. To avoid
overlapping notation we specialize their definitions now to the RS-SRBM case.

First, for a pair $(\theta, R)$,
\begin{eqnarray*}
C_1 & = & \{ (\theta_0, r_1, r_2) : \theta_0 < 0, r_1 < 1, r_2 >1 \} \qquad \mbox{and} \\
C_2 & = & \{ (\theta_0, r_1, r_2) : \theta_0 < 0, r_1 > 1, r_2 <1 \},
\end{eqnarray*}
with $C = C_1 \bigcup C_2$.

Next, for a data pair $(\theta, R) \in C_1$,
$$ \beta(\theta,R) = \left( \frac{1-r_2}{r_1-1} \right)^3$$
and for $(\theta, R) \in C_2$
$$ \beta(\theta,R) = \left( \frac{r_1-1}{1-r_2} \right)^3.$$
For general SRBM data $\beta(\theta,R)$ depends on $\theta$ but in the
rotationally symmetric case the dependence disappears. These definitions are
related to spiral piecewise linear solutions of the Skorohod problem.
Section 3 in \cite{bdh08} should be consulted for an in-depth explanation
of how these expressions arise.

We are now prepared to present a series of lemmas which lead to the main stability
result of this section.
The first lemma probably appears in a textbook somewhere, but we state it here and prove
it in the Appendix for completeness. For later use, note that the lemma implies that $a+b+c \not= 0$. All the results stated in this section apply to the three-dimensional case.
\begin{lemma} \label{lem:abc}
If a reflection matrix $R$ is non-singular and rotationally symmetric then its inverse must be of the form
 $$    R^{-1}=     \left(\begin{array}{ccc}
                                                                                      a & b & c \\
                                                                                      c & a & b \\
                                                                                      b & c & a
                                                                                     \end{array} \right),
                                                                                          $$
and $(a+b+c)(1+r_{1}+r_{2}) = 1.$
\end{lemma}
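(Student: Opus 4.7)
The plan is to exploit the circulant structure of $R$. Observe that the rows of $R$ are cyclic right-shifts of one another: row~2 is obtained from row~1 by shifting right by one position (with wrap-around), and likewise for row~3. Equivalently, if $P$ denotes the $3\times 3$ cyclic shift permutation matrix, one checks that $PR = RP$, i.e.\ $R$ commutes with $P$.

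Since $R$ is non-singular by hypothesis, its inverse also commutes with $P$: from $PR = RP$ we get $R^{-1}P = PR^{-1}$. Writing the commutation relation $R^{-1}P = PR^{-1}$ out entrywise forces every entry of $R^{-1}$ to be determined by its first row; namely, $R^{-1}$ must itself be a circulant matrix whose rows are cyclic shifts of its first row. Labeling the first row $(a,b,c)$ then gives exactly the displayed form for $R^{-1}$.

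For the product identity, I would use the all-ones vector $e=(1,1,1)'$, which is a common eigenvector of every $3\times 3$ circulant matrix. Direct computation gives
\[
Re = (1+r_1+r_2)\,e \quad\text{and}\quad R^{-1}e = (a+b+c)\,e.
\]
Applying $R^{-1}$ to the first identity (or $R$ to the second) and using $R^{-1}R = I$ yields
\[
(a+b+c)(1+r_1+r_2)\, e = e,
\]
so $(a+b+c)(1+r_1+r_2) = 1$. As a bonus, this also shows that non-singularity of $R$ forces $1+r_1+r_2 \ne 0$, hence $a+b+c \ne 0$ as well, which is the remark noted before the lemma.

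I do not expect any genuine obstacle here; the argument is a short piece of linear algebra. The only mildly delicate point is establishing that $R^{-1}$ is circulant. The commutation-with-$P$ argument above is the cleanest route; alternatively, one could simply solve $RR^{-1} = I$ entry by entry, assume a general $3\times 3$ inverse, and verify that the nine defining equations collapse to exactly three independent scalars arranged in the stated cyclic pattern, but this is more computational and less illuminating.
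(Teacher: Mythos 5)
Your proof is correct, but it follows a different route from the paper's. The paper does not derive the circulant form of $R^{-1}$ structurally: it simply posits the stated ansatz, observes that the nine scalar equations in $RR^{-1}=I$ are then consistent, and invokes uniqueness of the matrix inverse to conclude that $R^{-1}$ must have that form; the identity $(a+b+c)(1+r_1+r_2)=1$ is then obtained by writing out the three equations coming from one column of $RR^{-1}=I$ (namely $a+r_2c+r_1b=1$, $b+r_2a+r_1c=0$, $c+r_2b+r_1a=0$) and summing them. Your argument instead notes that $R$ commutes with the cyclic shift $P$ (indeed $R=I+r_1P+r_2P^2$), deduces $R^{-1}P=PR^{-1}$, and identifies the commutant of $P$ with circulants of the stated orientation --- this explains \emph{why} the inverse is circulant rather than verifying an ansatz, and it replaces the paper's ``it can be checked'' consistency step with a one-line structural fact. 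Your eigenvector computation with $e=(1,1,1)'$ is essentially the same row-sum observation as the paper's summing of the column equations, just packaged as $Re=(1+r_1+r_2)e$ and $R^{-1}e=(a+b+c)e$; it also yields the remark preceding the lemma, that $a+b+c\neq 0$, exactly as the paper intends. In short: the paper's proof is shorter to state but computational, yours is slightly longer but more transparent and generalizes immediately to rotationally symmetric matrices in any dimension.
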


Since existence of an SRBM requires that $R$ is completely-$\cal{S}$, our first
task is derive a simple condition to insure that this characterization holds.

\begin{lemma} \label{comps}
Suppose a matrix $R$ is rotationally symmetric. Then $R$ being
completely-$\cal{S}$ is equivalent to $1+r_{1}+r_{2} > 0.$
\end{lemma}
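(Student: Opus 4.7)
The plan is to prove both directions of the equivalence by exploiting the cyclic structure of $R$.

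For the forward direction, if $R$ is completely-$\mathcal{S}$, then in particular $R$ is an $\mathcal{S}$-matrix, so there exists $u > 0$ with $Ru > 0$. The key observation is that every column of a rotationally symmetric $R$ sums to $1+r_1+r_2$, so $\mathbf{1}^\top R = (1+r_1+r_2)\mathbf{1}^\top$, where $\mathbf{1}=(1,1,1)^\top$. Premultiplying $Ru > 0$ by $\mathbf{1}^\top$ yields
\[
0 \; < \; \mathbf{1}^\top R u \;=\; (1+r_1+r_2)(u_1+u_2+u_3),
\]
and since $u_1+u_2+u_3 > 0$, this forces $1+r_1+r_2 > 0$.

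For the converse, assume $1+r_1+r_2 > 0$ and verify each principal submatrix is $\mathcal{S}$. The $1\times 1$ submatrices are just $(1)$, trivially $\mathcal{S}$. For $R$ itself, the witness $u=\mathbf{1}$ gives $R\mathbf{1}=(1+r_1+r_2)\mathbf{1} > 0$ by assumption. The remaining work is the $2\times 2$ principal submatrices, each of which is of the form $M=\bigl(\begin{smallmatrix}1 & a \\ b & 1\end{smallmatrix}\bigr)$ with $\{a,b\}\subset\{r_1,r_2\}$. I would split into cases: if $a\ge 0$ or $b\ge 0$ (say $a \ge 0$), then taking $u=(\epsilon,1)^\top$ with $\epsilon > 0$ small works since $1 + a\cdot 1 > 0$ and $b\epsilon+1 > 0$. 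If both $a,b < 0$, I would search for $u=(1,t)^\top$ and note that the two inequalities $1 + at > 0$ and $b + t > 0$ are simultaneously satisfiable iff $|b| < 1/|a|$, i.e.\ $ab < 1$.

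Thus the one remaining step is to deduce $r_1 r_2 < 1$ from the hypothesis in the case $r_1, r_2 < 0$. Here $1+r_1+r_2 > 0$ gives $|r_1|+|r_2| < 1$, and by AM--GM
\[
\sqrt{|r_1 r_2|} \;\le\; \frac{|r_1|+|r_2|}{2} \;<\; \frac{1}{2},
\]
so $r_1 r_2 < 1/4 < 1$. The main (minor) obstacle is really just being careful about which $2\times 2$ submatrices arise under the cyclic labelling, but by the symmetry of $R$ they all fall under the same template above, so the AM--GM argument handles every case uniformly.
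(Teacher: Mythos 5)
Your proof is correct and follows essentially the same route as the paper: summing the inequalities of $Ru>0$ (i.e.\ using that all column sums equal $1+r_1+r_2$) for the forward direction, taking $u=(1,1,1)'$ for the full matrix, and a sign case analysis on the $2\times 2$ principal submatrices in which the only nontrivial case, $r_1,r_2<0$, reduces to $r_1r_2<1$. The paper gets $r_1r_2<1$ directly from $r_1,r_2>-1$, whereas you route it through AM--GM, a harmless (if slightly stronger than needed) variation.
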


\begin{proof}
First suppose $R$ is completely-$\cal{S}$ and $1+r_{1}+r_{2} \le 0$.
We derive a contradiction.
Since $R$ is completely-$\cal{S}$, there exists a vector
$u\equiv(u_{1}, u_{2}, u_{3})'>0$ such that $Ru>0.$ Summing the equations
in $Ru>0$ we have
\begin{equation} \label{ru}
(1+r_{1}+r_{2})(u_{1}+u_{2}+u_{3}) > 0.
\end{equation}
But if  $1+r_{1}+r_{2} \le 0$, then there is no $u>0$ satisfying (\ref{ru}),
which is a contradiction. So, we have proven that completely-$\cal{S}$ implies
$1+r_{1}+r_{2} > 0$ which is one direction of the equivalence.

Now assume that $1+r_{1}+r_{2} > 0$. Then note that
$u=(1,1,1)'$ satisfies $Ru>0$. This implies that $R$ is an $\cal{S}$-matrix.
We must now verify that the two-by-two principal submatrices are also $\cal{S}$-matrices.
These submatrices take the form
$$ S_1 = \left(
     \begin{array}{cc}
       1 & r_2 \\
       r_1 & 1 \\
     \end{array}
   \right)   \qquad \mbox{and} \qquad
S_2 = \left(
  \begin{array}{cc}
    1 & r_1 \\
    r_2 & 1 \\
  \end{array}
\right).$$
We prove the result for $S_1$ only, since the argument for $S_2$ is completely
analogous. Now, for $S_1$ to be an $\cal{S}$-matrix there must exist a $(u_1,u_2)'>0$
such that,
\begin{eqnarray}
u_1 + u_2r_2 & > & 0  \label{rua} \\
u_1r_1 + u_2 & > & 0. \label{rub}
\end{eqnarray}
First suppose $r_1,r_2 > 0$. Then any $(u_1,u_2)'>0$ satisfies (\ref{rua}) and (\ref{rub}). In the
cases $r_1 > 0$, $r_2 < 0$ and  $r_1 < 0$, $r_2 > 0$ it is clear that (\ref{rua}) and (\ref{rub})
for some positive $u$. In the last case, 
$r_1,r_2 < 0$, it can be checked that (\ref{rua}) and (\ref{rub}) holding
for some $u$ is equivalent to $r_1r_2<1$. This condition holds when
$r_1$ and $r_2$ are negative because $1+r_{1}+r_{2} > 0$
insures $r_1, r_2 > -1$.
Finally, if $r_1=0$ and/or $r_2=0$ then, for example $u=(1,1)$ satisfies (\ref{rua}) and (\ref{rub}).

\end{proof}

Having now dispatched with the first line in Figure \ref{fig:stable}, we present
a lemma relating to the second line.
\begin{lemma} \label{nec}
Let the data $(\theta, \Gamma, R)$ be rotationally symmetric and
let $R$ be non-singular and completely-$\cal{S}$. Then $R^{-1}\theta < 0$ is equivalent to $\theta_{0} < 0$.
\end{lemma}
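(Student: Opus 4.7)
The plan is to combine the explicit form of $R^{-1}$ from Lemma \ref{lem:abc} with the $\mathcal{S}$-characterization in Lemma \ref{comps}, and then just compute $R^{-1}\theta$ directly. Since the calculation is short, this is really more of an assembly of earlier facts than a stand-alone argument.

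First, I would invoke Lemma \ref{lem:abc} to write
$$R^{-1} = \begin{pmatrix} a & b & c \\ c & a & b \\ b & c & a \end{pmatrix},$$
together with the identity $(a+b+c)(1+r_1+r_2) = 1$. Since $\theta = \theta_0(1,1,1)'$ by rotational symmetry, the product $R^{-1}\theta$ is immediate: each row of $R^{-1}$ sums to $a+b+c$, so
$$R^{-1}\theta = \theta_0(a+b+c)\,(1,1,1)'.$$
Thus the three components of $R^{-1}\theta$ are equal, and they are simultaneously negative (or positive, or zero) exactly when $\theta_0(a+b+c)$ has the corresponding sign.

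Next, I would pin down the sign of $a+b+c$. By hypothesis $R$ is completely-$\mathcal{S}$ and rotationally symmetric, so Lemma \ref{comps} gives $1+r_1+r_2 > 0$. Combined with $(a+b+c)(1+r_1+r_2) = 1$, this forces $a+b+c = 1/(1+r_1+r_2) > 0$. Therefore the sign of each component of $R^{-1}\theta$ coincides with the sign of $\theta_0$, and in particular $R^{-1}\theta < 0$ if and only if $\theta_0 < 0$, proving the lemma.

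There is no real obstacle here: the nonsingularity of $R$ is used only to justify the application of Lemma \ref{lem:abc}, and the completely-$\mathcal{S}$ assumption is used only to get $a+b+c > 0$ via Lemma \ref{comps}. The whole proof fits in a few lines once those two earlier lemmas are in place.
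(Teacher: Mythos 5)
Your argument is correct and matches the paper's proof essentially step for step: both use Lemma \ref{lem:abc} to reduce $R^{-1}\theta$ to $\theta_0(a+b+c)(1,1,1)'$ with $(a+b+c)=1/(1+r_1+r_2)$, then invoke Lemma \ref{comps} to get $1+r_1+r_2>0$ and conclude the sign equivalence. No issues.
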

\begin{proof}
Using Lemma \ref{lem:abc} the condition $R^{-1}\theta < 0$ reduces to
$$ (a+b+c) \theta_0 < 0.$$ The second part of Lemma \ref{lem:abc}
states that $(a+b+c)= [1+r_{1}+r_{2}]^{-1}$. Hence, we can rewrite the condition
as
\begin{equation} \label{rtheta}
R^{-1}\theta  =  \frac{\theta_{0}}{1+r_{1}+r_{2}} < 0.
\end{equation}
By Lemma \ref{comps}, the completely-$\cal{S}$ condition is equivalent to $1+r_{1}+r_{2}>0$.
Given this inequality, (\ref{rtheta}) is clearly equivalent to
$\theta_0<0$.
\end{proof}

Now we proceed to results involving the last two lines in Figure \ref{fig:stable}.
\begin{lemma} \label{suf}
%% If $C(R)$ and $ \beta(\theta,R)$ follows the definition of M.Bramson, J.G.Dai and J.M.Harrison\footnote{M.Bramson, %%J.G.Dai and J.M.Harrison. Positive Recurrence of reflecting Brownian motion in three dimensions.}.
Let the data $(\theta, \Gamma, R)$ be rotationally symmetric. Suppose further
that $R$ is non-singular, completely-$\cal{S}$, and $R^{-1}\theta < 0$.
Then the SRBM associated with
$(\theta, \Gamma, R)$ is stable iff $r_{1}+r_{2} < 2$.
\end{lemma}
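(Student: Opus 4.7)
The plan is to apply the Bramson--Dai--Harrison stability framework summarized in Figure~\ref{fig:stable}. The hypotheses together with Lemmas~\ref{comps} and~\ref{nec} discharge the top two decision nodes of that tree, so it suffices to analyze the third node (whether $(\theta,R)\in C_1\cup C_2$) and the fourth node (existence of a divergent LCP solution). I split the proof according to these two cases.

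The spiral case is a direct algebraic calculation. If $(\theta,R)\in C_1$, so that $r_1<1<r_2$, then by~\cite{bdh08} stability is equivalent to $\beta(\theta,R)<1$. Both $1-r_2$ and $r_1-1$ are negative, so the base $(1-r_2)/(r_1-1)$ is positive, and monotonicity of the cube yields
\[
\beta<1 \iff \frac{1-r_2}{r_1-1}<1 \iff 1-r_2>r_1-1 \iff r_1+r_2<2,
\]
where the middle equivalence flips direction because $r_1-1<0$. The case $(\theta,R)\in C_2$ is completely symmetric, so across the entire spiral region stability is equivalent to $r_1+r_2<2$.

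In the remaining non-spiral case, $(\theta,R)\notin C_1\cup C_2$ forces either $r_1,r_2\le 1$ or $r_1,r_2\ge 1$, and~\cite{bdh08} makes stability equivalent to the absence of a divergent LCP solution. I would enumerate candidate solutions $(u,v)$ by the support of $u$, solve each resulting linear subsystem using the rotational form of $R$, and test nonnegativity of the remaining components. When $r_1,r_2\le 1$ (which forces $r_1+r_2<2$, since $r_1=r_2=1$ is ruled out by non-singularity of $R$), a short positivity argument should exclude every divergent candidate. When $r_1,r_2\ge 1$ with at least one strict (so $r_1+r_2>2$), choosing $u$ supported on two coordinates produces an explicit divergent solution, and one checks that the third residual $v_i$ has the same sign as $1-r_1-r_2+r_1^2+r_2^2-r_1 r_2$ divided by $1-r_1 r_2$; verifying that this sign is nonnegative in the stated range certifies instability.

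The principal obstacle will be this LCP enumeration, especially the boundary points where exactly one of $r_1,r_2$ equals~$1$, since there the linear subsystems degenerate and the signs must be tracked by hand. The fully degenerate symmetric corner $r_1=r_2=1$ is harmlessly excluded: a direct computation gives $\det R=(1+r_1+r_2)\bigl(1-r_1-r_2+r_1^2+r_2^2-r_1 r_2\bigr)$, and the quadratic factor has discriminant $-3(r_1-1)^2$ as a polynomial in $r_2$, so it vanishes only at $(1,1)$, which contradicts the non-singularity hypothesis. This leaves only a finite collection of residual edge sub-cases to clean up, all of which fall into the same $r_1+r_2<2$ versus $r_1+r_2>2$ dichotomy.
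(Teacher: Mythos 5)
Your overall route is the same as the paper's: discharge the top of the Bramson--Dai--Harrison tree with Lemmas \ref{comps} and \ref{nec}, apply the $\beta(\theta,R)<1$ criterion on the spiral region, and settle the remaining region by enumerating LCP solutions according to the support of $u$. The spiral computation is exactly the paper's Case 1 and is correct, as is your factorization of $\det R$ ruling out only the point $(1,1)$.

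The LCP half, however, contains one step that fails as written and one that is only asserted. For the instability region $r_1,r_2\ge 1$ (not both equal to $1$), taking $u$ supported on coordinates $1,2$ gives $u_1=-\theta_0(1-r_2)/(1-r_1r_2)$, $u_2=-\theta_0(1-r_1)/(1-r_1r_2)$ and
$$v_3=\theta_0\,\frac{1-r_1-r_2+r_1^2+r_2^2-r_1r_2}{1-r_1r_2},$$
so $v_3$ has the \emph{opposite} sign of the ratio you wrote down, not the same sign: the numerator $Q=1-r_1-r_2+r_1^2+r_2^2-r_1r_2$ is positive there (your discriminant computation shows it vanishes only at $(1,1)$) while $1-r_1r_2<0$, so the ratio is negative, and it is precisely the omitted factor $\theta_0<0$ that makes $v_3>0$. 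Your proposed verification (``check the ratio is nonnegative in the stated range'') would therefore come out false even though the intended conclusion, existence of a divergent solution, is true. The paper sidesteps both this bookkeeping and your worry about the degenerate edges $r_1=1$ or $r_2=1$ by exhibiting the one-coordinate witness $u=(-\theta_0,0,0)'$, $v=-\theta_0(0,r_1-1,r_2-1)'$, which is manifestly a divergent LCP solution on the whole region, edges included. Finally, the stability direction on $r_1,r_2\le 1$ is where the paper does its real work: candidates with one positive coordinate of $u$ force $v\not\ge 0$, and the two-coordinate candidate forces $v_3=\theta_0 Q/(1-r_1r_2)<0$ since there $Q>0$ and $1-r_1r_2>0$. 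In your write-up this is only the assertion that a short positivity argument ``should exclude every divergent candidate,'' so that enumeration still needs to be carried out before the proof is complete.
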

\begin{proof}
Our proof relies on the results in \cite{bdh08} as depicted in Figure \ref{fig:stable}.
The assumptions of the lemma place us in the lower half of the figure.
To further partition the proof,
we divide the $(r_1,r_2)$ plane into four regions:
\begin{itemize}
  \item $C_{1} = \{(r_1,r_2) \in \reals^2 : r_{1} < 1, r_{2} > 1\}$
  \item $C_{2} = \{(r_1,r_2) \in \reals^2 : r_{1} > 1, r_{2} < 1\}$
  \item $C_{3} = \{(r_1,r_2) \in \reals^2 : r_{1} \ge 1, r_{2} \ge 1   \}  \setminus (1,1)  $
  \item $C_{4} = \{(r_1,r_2) \in \reals^2 : r_{1} \le 1, r_{2} \le 1\}  \setminus (1,1).$
\end{itemize}
We do not include the completely-$\cal{S}$ condition that $r_1 + r_2 > -1$ in this partitioning
scheme because the condition is not employed directly
in the algebraic arguments below.
Under our assumption
$R^{-1}\theta < 0$, the definitions of $C_1$ and
$C_2$ coincide with the Bramson et al.~\cite{bdh08} definitions given in
Section \ref{sec:stable}. Furthermore, note that if $(r_1,r_2) \in C_1 \bigcup C_2$
then $(\theta, R) \in C$.

The one point of the plane not included in the union of these regions is $r_1=r_2=1$. The matrix $R$ is singular in this case, which violates the assumption of the lemma.
Notice that the line $r_1+r_2=2$ bisects $C_1 \bigcup C_2$ and that $C_3$ lies
entirely above this line and $C_4$ entirely below this line.

\vspace{0.1in}

\noindent
\textbf{Case 1}: Suppose $(r_1,r_2) \in C_1 \bigcup C_2$. In this case, stability of the SRBM is equivalent
to $\beta(\theta,R) < 1$.
Now, when $(r_1,r_2) \in C_{1}$ we have, $$\beta(\theta,R)  = \left(\frac{1-r_{2}}{r_{1}-1}\right)^{3}.$$
Since the numerator and denominator are both negative for
$(r_1,r_2) \in C_{1}$, the condition $\beta(\theta,R) < 1$ is equivalent to
$1-r_{2} > r_{1}-1$, which holds iff $r_{1}+r_{2} < 2$.

Next, when $(r_1,r_2) \in C_{2}$, $$\beta(\theta,R) = \left(\frac{1-r_{1}}{r_{2}-1}\right)^{3}.$$
Again, the numerator and denominator in the last expression are both negative for
$(r_1,r_2) \in C_{2}$. Therefore, $\beta(\theta,R) < 1$ is equivalent
$1-r_{1} > r_{2}-1$, which also holds iff $r_{1}+r_{2} < 2$.

So, for all of Case 1, $r_{1}+r_{2} < 2$ is necessary and sufficient for stability.

\vspace{0.1in}

\noindent
\textbf{Case 2}: Suppose $(r_1,r_2) \in C_3 \bigcup C_4$. In this case stability of the SRBM is equivalent to
the nonexistence of a divergent solution to the LCP.

\vspace{0.1in}

\noindent
\textbf{Case 2a}: We examine the case $(r_1,r_2) \in C_3.$
Since $r_1+r_2 >2$ for all points in $C_3$ we need to show instability
for data in this region.
In the LCP, let $u = (-\theta_{0},0,0)'$ and  $v = -\theta_0 (0, r_{1}-1, r_{2}-1)'.$ This
is clearly a divergent solution to the LCP for any $(r_1,r_2) \in C_3$. Therefore the corresponding SRBM is never stable in this
case.

\vspace{0.1in}

\noindent
\textbf{Case 2b}: We examine the case $(r_1,r_2) \in C_4.$
Since $r_1+r_2 <2$ for all points in $C_3$ we need to show stability
for data in this region.

Consider a solution $u,v$ to the LCP. We show that there exist no divergent solutions for
this case.
If $u > 0$, then $v = 0$ and the solution is stable. If $u = 0$, then we must have $v = \theta < 0$ which
is not an allowable LCP solution. So if there exists a divergent solution, either one term or two terms in
$u=(u_1,u_2,u_3)'$ is positive.
Suppose one term is positive and it is $u_1$, which implies $v_1=0$. Then we have
$v_{1} = \theta_{0}+u_{1} = 0$ yielding $u_{1}= -\theta_{0}$. Therefore, in this case
the unique a solution to the LCP must be of the form $u = (-\theta_{0},0,0)',$ $v = -\theta_0 (0,r_{1}-1,r_{2}-1)',$
which violates the non-negativity condition of $v$. Exactly analogous arguments show that
neither $u_2$ nor $u_3$ can be the positive term.  So, there exist no LCP solutions in which
only one term in $u$ is positive.

Next, suppose that two terms of $u$ are positive. Again, without loss of generality,
suppose $u_{1} > 0, u_{2} > 0$, and  $u_{3} = 0$, which implies $v_{1}=v_{2}=0$.
From the LCP equations we have $u_{1}+r_{2}u_{2}+\theta_{0} = 0$ and
$u_{2}+r_{1}u_{1}+\theta_{0} = 0.$ Solving these yields:
\begin{equation} \label{u1u2}
u_{1} = \frac{-\theta_{0}(1-r_{2})}{1-r_{1}r_{2}} \qquad u_{2} = \frac{-\theta_{0}(1-r_{1})}{1-r_{1}r_{2}}.
\end{equation}
If $r_1=1$, then $r_2 <1$ and these equations
force $u_2 =0$, which contradicts our assumption on $u$. Similarly, we cannot
have $r_2=1$. So, we now assume that both $r_1$ and $r_2$ are strictly less than 1.
In this case, the solution given in (\ref{u1u2}) implies that both $u_1$ and $u_2$ are
positive. Once again using the LCP equations we obtain:
\begin{equation}
v_{3} = -\theta_0 \cdot \frac{r_{1}-r_{1}^{2}+r_{2}-r_{2}^{2}+r_{1}r_{2}-1}{1-r_{1}r_{2}}.
\end{equation}
Note that $1-r_{1}r_{2} > 0$, $-\theta_0 > 0$, and $$r_{1}-r_{1}^{2}+r_{2}-r_{2}^{2}+r_{1}r_{2}-1 \leq r_{1}+r_{2}-r_{1}r_{2}-1 = -(1-r_{1})(1-r_{2}) < 0,$$ Therefore, $v_{3}<0$ which violates the non-negativity condition in the LCP.

We have now demonstrated that no divergent LCP solutions exist when
$(r_1,r_2) \in C_4$. So, any SRBM with data
in this region is stable.
\end{proof}

Lemmas \ref{comps} through \ref{suf}
then imply simple existence and stability conditions for RS-SRBM in
three dimensions.

\begin{theorem} \label{srbm_stable}
Consider an SRBM in three dimensions with rotationally symmetric data $(\theta, \Gamma, R)$.
The necessary and sufficient conditions for existence and stability of such an SRBM are $\theta_0 <0$ and
$ -1 < r_1+r_2 <2.$
\end{theorem}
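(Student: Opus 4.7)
The plan is to assemble the theorem by chaining Lemmas \ref{comps}, \ref{nec}, and \ref{suf} along the decision tree in Figure \ref{fig:stable}. First, I would invoke the Reiman--Williams and Taylor--Williams existence results cited in Section \ref{sec:srbm}: existence of an SRBM is equivalent to $R$ being completely-$\mathcal{S}$, which in the rotationally symmetric case reduces, by Lemma \ref{comps}, to $1+r_1+r_2 > 0$, i.e., the left inequality $r_1+r_2 > -1$. This disposes of the top node of the tree.

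Next, for stability, I would walk down the second and third nodes. The second node checks whether $R^{-1}\theta < 0$, and Lemma \ref{nec} (applicable under completely-$\mathcal{S}$ together with non-singularity of $R$) rewrites this as $\theta_0 < 0$. Under both conditions already obtained, Lemma \ref{suf} then characterizes stability as $r_1+r_2 < 2$, supplying the right inequality. So the proof is essentially: combine the three lemmas and read off the conjunction of $\theta_0 < 0$, $r_1+r_2 > -1$, and $r_1+r_2 < 2$.

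The one bookkeeping step that needs care is the non-singularity hypothesis in Lemmas \ref{nec} and \ref{suf}. I would verify that among rotationally symmetric $R$, the only singular matrices are those with $1+r_1+r_2 = 0$ or with $r_1=r_2=1$: the first is already excluded by completely-$\mathcal{S}$, and the second gives $r_1+r_2=2$, which sits on the excluded boundary. Thus non-singularity is automatic on the open interval $-1 < r_1+r_2 < 2$, and the lemmas apply uniformly. One can establish this singularity dichotomy either by directly computing $\det R = 1 + r_1^3 + r_2^3 - 3r_1 r_2$ and factoring, or by using that the circulant structure of $R$ makes its eigenvalues $1+r_1+r_2$ and $1 - (r_1+r_2)/2 \pm i\sqrt{3}(r_2-r_1)/2$.

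The main obstacle, and essentially the only delicate point, is confirming that the strict upper bound $r_1+r_2 < 2$ (rather than $\leq 2$) is correct by ruling out the boundary case $r_1=r_2=1$. Here $R$ is singular but still completely-$\mathcal{S}$, so the existence half of the theorem does not automatically exclude it. I would argue instability directly: with $r_1=r_2=1$, the first two regions $C_3$ reasoning in the proof of Lemma \ref{suf} applies in the limit, since $(-\theta_0,0,0)'$ still yields a divergent LCP-type configuration on the boundary, confirming that the SRBM cannot be stable. Combining everything, $\theta_0 < 0$ together with $-1 < r_1+r_2 < 2$ is both necessary and sufficient.
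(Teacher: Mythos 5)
Your proposal follows essentially the same route as the paper: the theorem is obtained by chaining Lemma \ref{comps} (existence $\Leftrightarrow$ completely-$\mathcal{S}$ $\Leftrightarrow$ $1+r_1+r_2>0$), Lemma \ref{nec} ($R^{-1}\theta<0 \Leftrightarrow \theta_0<0$), and Lemma \ref{suf} (stability $\Leftrightarrow r_1+r_2<2$) along the decision tree of Figure \ref{fig:stable}; the paper's proof is exactly this combination. Your observation that non-singularity is automatic on $-1<r_1+r_2<2$ is correct and worth making explicit, since $\det R = 1+r_1^3+r_2^3-3r_1r_2 = (1+r_1+r_2)\bigl(1+r_1^2+r_2^2-r_1-r_2-r_1r_2\bigr)$ and the second factor vanishes only at $r_1=r_2=1$.

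However, your treatment of the boundary case $r_1=r_2=1$ is wrong as stated. With $R$ the all-ones matrix and $u=(-\theta_0,0,0)'$, one gets $v=\theta+Ru=(\theta_0-\theta_0)(1,1,1)'=0$, so $(u,v)$ is a \emph{stable} LCP solution, not a divergent one; the ``$C_3$ reasoning in the limit'' does not produce the divergent configuration you claim, because the off-diagonal entries $r_1-1$ and $r_2-1$ degenerate to zero. The correct way to exclude this point is not through the LCP node at all but through the second node of the Bramson--Dai--Harrison tree: non-singularity of $R$ together with $R^{-1}\theta<0$ is a necessary condition for positive recurrence, so a singular $R$ (which here forces $r_1+r_2=2$) already rules out stability, with no LCP argument needed. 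This is how the framework the paper invokes (Figure \ref{fig:stable}) disposes of the case; with that substitution your argument is complete and matches the paper's.
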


The results in \cite{kytb11}, which we shall make use of in later sections, require that $R$ be a $\cal{P}$-matrix.
The next result shows that this is not a restriction in the rotationally symmetric case, given
that we only study stable SRBM's. The proof is given in the Appendix.

\begin{theorem} \label{pands}
Suppose $R$ is rotationally symmetric and $r_1+r_2 <2$. Then
$R$ being completely-$\cal{S}$ is equivalent to $R$ being a $\cal{P}$-matrix.
\end{theorem}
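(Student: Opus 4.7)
The plan is to compute each principal minor of $R$ in closed form and analyze signs. The $1\times 1$ principal minors are the diagonal entries, all equal to $1$. Each of the three $2\times 2$ principal submatrices has determinant $1 - r_{1}r_{2}$, as one checks directly from the rotationally symmetric form. For the $3\times 3$ minor, expanding along the first row yields the cubic identity
\begin{equation*}
\det(R) \;=\; 1 + r_{1}^{3} + r_{2}^{3} - 3r_{1}r_{2} \;=\; (1+r_{1}+r_{2})\,Q(r_{1},r_{2}),
\end{equation*}
where
\begin{equation*}
Q(r_{1},r_{2}) \;=\; 1 + r_{1}^{2} + r_{2}^{2} - r_{1} - r_{2} - r_{1}r_{2} \;=\; \tfrac{1}{2}\bigl[(1-r_{1})^{2} + (1-r_{2})^{2} + (r_{1}-r_{2})^{2}\bigr].
\end{equation*}
Thus $Q\ge 0$ always, with equality only at $r_{1}=r_{2}=1$.

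For the implication $\mathcal{P}\text{-matrix}\Rightarrow \text{completely-}\mathcal{S}$, the hypothesis gives $\det(R)>0$, and since $Q\ge 0$ (with $Q=0$ forcing $\det(R)=0$), both factors in the factorization above must be strictly positive. In particular $1+r_{1}+r_{2}>0$, and Lemma \ref{comps} then delivers completely-$\mathcal{S}$. Note that this half of the equivalence does not require $r_{1}+r_{2}<2$. For the converse, assume $R$ is completely-$\mathcal{S}$ and $r_{1}+r_{2}<2$. Lemma \ref{comps} gives $r_{1}+r_{2}>-1$, while $r_{1}+r_{2}<2$ excludes $r_{1}=r_{2}=1$; hence $Q>0$ and $\det(R)>0$. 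It remains only to verify $1-r_{1}r_{2}>0$, which I plan to handle by a short case split on signs: if both $r_{i}$ are non-negative, AM-GM combined with $r_{1}+r_{2}<2$ gives $r_{1}r_{2}<1$; if both are non-positive, AM-GM applied to $-r_{1},-r_{2}$ combined with $r_{1}+r_{2}>-1$ gives $r_{1}r_{2}<1/4$; in the mixed-sign case $r_{1}r_{2}\le 0$.

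The step where care is genuinely needed is the $2\times 2$ minor: the bound $r_{1}+r_{2}<2$ alone does not force $r_{1}r_{2}<1$ (two sufficiently negative entries would violate it), so the lower bound $r_{1}+r_{2}>-1$ furnished by completely-$\mathcal{S}$ is essential. The factorization of $\det(R)$ is the other central observation; once both are in hand, every other step is a routine sign check.
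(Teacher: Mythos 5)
Your proof is correct and takes essentially the same route as the paper: the same principal minors, the same factorization $1+r_{1}^{3}+r_{2}^{3}-3r_{1}r_{2}=(1+r_{1}+r_{2})(1+r_{1}^{2}+r_{2}^{2}-r_{1}-r_{2}-r_{1}r_{2})$, the reduction of completely-$\mathcal{S}$ to $1+r_{1}+r_{2}>0$ via Lemma \ref{comps}, and an AM-GM/sign-based case analysis for the $2\times 2$ minor. Your sum-of-squares identity for the quadratic factor merely streamlines the paper's four-way sign split (which treats the positive-entries case by AM-GM applied to $1,r_{1}^{3},r_{2}^{3}$ instead), and your remark that the implication from $\mathcal{P}$-matrix to completely-$\mathcal{S}$ needs no hypothesis $r_{1}+r_{2}<2$ is a correct minor refinement.
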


\section{Optimal Path Preliminaries}

In this section we establish some notation and review the properties pertaining to optimal paths.
As much as possible we use notation which is consistent with either \cite{adh99} or \cite{kytb11}. Many of our results
rely on algebraic expressions given in \cite{kytb11}. First, we give expressions for the optimal costs
of various types of paths.

Set $I = \{1,2, \ldots, d\}$ and for $K \subset I$ define the face associated with $K$ as follows:
$$ F_K = \{ v \in \mathbb{R}_+^{d} : v_i = 0 \; \mbox{for all $i \in K$} \}.$$
When $d=3$, if $|K|=2$ then $F_K$ is an axis and if $|K|=1$ then $F_K$ is a 2-dimensional face.
\begin{definition} Let $\mathcal{H}^{d}_w$ be the modification of $\mathcal{H}^d$ such that $x(0)=w$. We define the following costs, inspired by the notation
in \cite{kytb11}.
\begin{enumerate}
\item (Direct Path Cost) For $w,v \in \mathbb{R}^{d}_{+}$, set $$\tilde{\mathcal{I}}_{0}(w,v) = \inf_{T \geq 0} \inf_{x \in \mathcal{H}^{d}_w,x(T)=v} \frac{1}{2} \int^{T}_{0} \parallel \dot{x}(t)-\theta \parallel ^{2} dt.$$

In a slight abuse of notation, we set $\tilde{\mathcal{I}}_{0}(v) : =\tilde{\mathcal{I}}_{0}(0,v).$ 

\item (One-piece Reflected Path Cost) 
Let $J$ and $K$ be subsets of $I$ with $K \subset J$ and $0 < |K| \le |J| \le d$.
For points $v \in F_K$ and $w \in F_J$, set
 $$\tilde{\mathcal{I}}_{K}(w,v) = \inf_{T \geq 0} \inf_{x \in \mathcal{H}^{d}_w, z(t) \in F_K \forall t \in [0,T], \psi(x)(T)=v} \frac{1}{2} \int^{T}_{0} \parallel \dot{x}(t)-\theta \parallel ^{2} dt.$$
 Set $\tilde{\mathcal{I}}_{K}(v)=\tilde{\mathcal{I}}_{K}(0,v).$
\item (Two-Piece Path via Face $F_K$) Let $d = 3$ and $K \subset I$ with $|K| \leq 2$. For $v \in \mathbb{R}^{3}_{+} \setminus F_K$, set $$\tilde{\mathcal{I}}^{2}_{K}(v) = \inf_{w \in F_{K}} {(\tilde{\mathcal{I}}_{K}(w) + \tilde{\mathcal{I}}_{0}(w,v))}.$$
\item (Two-Piece Path via an Axis). Let $d = 3$ and $K \subset I$ with $|K| = 2$. For $i \in K$ and $v \in F_{i}$, set $$\tilde{\mathcal{I}}^{2}_{K,i}(v) = \inf_{w \in F_{K}} {(\tilde{\mathcal{I}}_{K}(w) + \tilde{\mathcal{I}}_{i}(w,v))}.$$
\item (Three-Piece Gradual Escape Path) Let $d = 3$ and $K \subset I$ with $|K| = 2$. For $i \in K$ and $v \in int(\mathbb{R}^{3}_{+})$, set $$\tilde{\mathcal{I}}^{3}_{K,i}(v) = \inf_{u \in F_{i}} {(\tilde{\mathcal{I}}^{2}_{K,i}(u) + \tilde{\mathcal{I}}_{0}(u,v))}.$$
\end{enumerate}
\end{definition}
Each cost above corresponds to the cost for an optimal
path of a certain type, as denoted in each item in the list. These costs, and the associated paths,
are the building blocks for constructing paths which are optimal in the original
variational problem.

In \cite{adh99}, the authors established various properties of optimal paths that hold
in all dimensions. The first three items in Lemma \ref{lem:opp} restate those properties. We add a fourth
property for RSVPs and MSVPs in three dimensions. These properties are frequently used to
establish results in subsequent sections. The first three parts are proved in \cite{adh99},
the fourth result is evident using symmetry. In the Appendix, we state and prove a simple
extension to the convexity property (see Lemma \ref{refconvex}). The convexity property below implies that direct
paths within a face should have constant velocity and direction. The
extension shows that this also is true for reflected paths.

\begin{lemma}  \label{lem:opp}

\vspace{0.1in}

\begin{enumerate}
\item (Optimality of Linear Paths via Convexity) Let $g$ be a convex function on $\mathbb{R}^{d}$, and $x \in \mathcal{H}^{d}$. Then for $t_{1} < t_{2}$, $$\int^{t_{2}}_{t_{1}} g(\dot{x}(t))dt \geq \int^{t_{2}}_{t_{1}} g\left(\frac{x(t_{2})-x(t_{1})}{t_{2}-t_{1}}\right) dt.$$
    This implies that a (one-piece) linear path minimizes the unconstrained variational problem.
\item (Scaling) Consider a variational problem with $v \in \mathbb{R}^{d}_{+}$. For $\forall k > 0$, $I(kv) = kI(v)$. Furthermore, if $(x,y,z)$ is the optimal triple for $v$ and $\hat{x},\hat{y},\hat{z}$ is the optimal triple for $kv$, then $\hat{x}(t)=kx(t/k)$, $\hat{y}(t)=ky(t/k)$, $\hat{z}(t)=kz(t/k)$.
\item (Merging Paths) Let $(x_{1}, y_{1}, z_{1})$ be an $R$-regulation triple on $[0,t_{1}]$ with $z_{1}(0) = 0$ and $z_{1}(t_{1}) = w$ and $(x_{2},y_{2},z_{2})$ be an optimal triple on $[s_{2},t_{2}]$ with $z_{2}(s_{2}) = w$ and $z_{2}(t_{2}) = v$. Suppose both $x_{1}$ and $x_{2}$ are absolutely continuous. Define
    $$z(t) = \left \{ \begin{array}{ll}
    z_{1}(t)            & \mbox{for } 0 \leq t \leq t_{1},  \\
    z_{2}(t-t_{1}+s_{2}) & \mbox{for } t_{1} \leq t \leq t_{1}+t_{2}-s_{2}, \\
                       \end{array} \right.
                       $$
    $$x(t) = \left \{ \begin{array}{ll}
    x_{1}(t)            & \mbox{for } 0 \leq t \leq t_{1},  \\
    x_{2}(t-t_{1}+s_{2}) & \mbox{for } t_{1} \leq t \leq t_{1}+t_{2}-s_{2}, \\
                       \end{array} \right.
                        $$
    $$y(t) = \left \{ \begin{array}{ll}
    y_{1}(t)            & \mbox{for } 0 \leq t \leq t_{1},  \\
    y_{2}(t-t_{1}+s_{2}) & \mbox{for } t_{1} \leq t \leq t_{1}+t_{2}-s_{2}, \\
                       \end{array} \right.
                        $$
and $s=t_{1}+t_{2}-s_{2}$. Then $(x,y,z)$ is an $R$-regulation triple on $[0,s]$ with $z(0) = 0$ and $z(s) = v$.

\item (Symmetric Terminal Points) Consider an RSVP. If $v_{1} = (a,b,c)$, $v_{2} = (b,c,a)$, $v_{3} = (c,a,b)$
with $a,b,c \ge 0$, then $I(v_{1}) = I(v_{2}) = I(v_{3})$. Furthermore, each optimal path to one of these points is
rotationally symmetric translation of an optimal path to one of the other points.
For an MSVP case, let $v'_{1} = (b,a,c)$, $v'_{2} = (c,b,a)$, $v'_{3} = (a,c,b)$, then $I(v_{1}) = I(v_{2}) = I(v_{3}) = I(v'_{1}) = I(v'_{2}) = I(v'_{3}).$
\end{enumerate}
\end{lemma}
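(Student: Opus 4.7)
The plan is to verify the four claims in order, relying on well-known convexity/rescaling/concatenation arguments for parts 1--3 (as in \cite{adh99}) and on the matrix identities afforded by rotational symmetry for part 4.

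For part 1, I would apply Jensen's inequality directly. Since $g$ is convex on $\mathbb{R}^d$ and $\dot x$ is integrable on $[t_1,t_2]$, the average velocity $\bar v = (x(t_2)-x(t_1))/(t_2-t_1)$ satisfies
$$g(\bar v) = g\Bigl(\tfrac{1}{t_2-t_1}\int_{t_1}^{t_2}\dot x(t)\,dt\Bigr) \le \tfrac{1}{t_2-t_1}\int_{t_1}^{t_2} g(\dot x(t))\,dt,$$
which is the claimed inequality after multiplying by $t_2-t_1$. Taking $g(v)=\tfrac{1}{2}\|v-\theta\|^2$ shows the linear path realizes the minimum. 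For part 2, given an optimal triple $(x,y,z)$ for $v$ with terminal time $T$, I would set $\hat x(t)=kx(t/k)$, $\hat y(t)=ky(t/k)$, $\hat z(t)=kz(t/k)$ on $[0,kT]$ and check that the Skorohod relations $\hat z = \hat x + R\hat y$, $\hat z\ge 0$, $\hat y$ non-decreasing with $\hat y(0)=0$, and the complementarity integral are preserved. Using $\dot{\hat x}(t)=\dot x(t/k)$ and a change of variables yields cost $kI(v)$, so $I(kv)\le kI(v)$; running the same argument with scaling factor $1/k$ on an optimal triple for $kv$ gives the reverse inequality.

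For part 3, I would verify the Skorohod problem for the concatenated $(x,y,z)$ directly. After if necessary shifting $x_2$ and $y_2$ by additive constants (chosen so that $x_2(s_2)+Ry_2(s_2)=w=x_1(t_1)+Ry_1(t_1)$ while leaving $z_2$ unchanged), continuity of $x$, $y$, and $z$ at $t=t_1$ follows, and $y$ remains non-decreasing on $[0,s]$ because $y_2(s_2)$ is set equal to $y_1(t_1)$. The decomposition $z=x+Ry$ on each subinterval then gives it on the whole interval, $z\ge 0$ is inherited piecewise, and the complementarity integral $\int_0^s z_i\,dy_i$ splits additively across $[0,t_1]$ and $[t_1,s]$, each contribution vanishing by hypothesis.

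For part 4, let $P$ be the cyclic permutation matrix with $P(a,b,c)'=(b,c,a)'$. Rotational symmetry of $(\theta,\Gamma,R)$ directly yields $PRP^{-1}=R$, $P\theta=\theta$, and $P\Gamma P^{-1}=\Gamma$, hence also $P\Gamma^{-1}P^{-1}=\Gamma^{-1}$ and $\|Pu\|=\|u\|$ for every $u$. Given an optimal triple $(x,y,z)$ for $v_1$, set $(\tilde x,\tilde y,\tilde z)=(Px,Py,Pz)$; then
$$\tilde z = P(x+Ry) = Px + (PRP^{-1})Py = \tilde x + R\tilde y,$$
and the remaining Skorohod conditions are preserved componentwise (under the same permutation applied to indices), with $\tilde z(T)=Pv_1=v_2$. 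The cost is unchanged because $\|P(\dot x-\theta)\|=\|\dot x-\theta\|$, so $I(v_1)=I(v_2)$; iterating gives $I(v_2)=I(v_3)$, and the symmetric translation of one optimal path onto another is explicit in the formulas. For the MSVP case, the additional equality $r_1=r_2$ makes the data invariant under the coordinate transposition $Q$ swapping any two axes, so the same argument applied to $Q$ produces the remaining equalities $I(v_1')=I(v_2')=I(v_3')$ and ties them to the $v_i$. The main obstacle throughout is purely bookkeeping in part 3 --- ensuring that the additive shifts used to glue $(x_2,y_2)$ onto $(x_1,y_1)$ preserve $z_2$ and the complementarity relations exactly; parts 1, 2, and 4 reduce to invoking the right convexity/change-of-variables/algebraic identity.
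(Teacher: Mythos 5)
Your proposal is correct and follows essentially the route the paper itself relies on: parts 1--3 are exactly the standard Jensen/scaling/concatenation arguments the paper defers to Avram--Dai--Hasenbein for, and part 4 is the permutation-invariance argument the paper summarizes as ``evident using symmetry.'' Your only addition is to make the bookkeeping explicit --- the additive shifts of $x_2,y_2$ in the merging step (which leave $\dot x_2$, $dy_2$, and hence cost and complementarity unchanged) and the identities $PRP^{-1}=R$, $P\theta=\theta$, $P\Gamma P^{-1}=\Gamma$ (and the transposition $Q$ when $r_1=r_2$) --- all of which check out.
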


\section{Eliminating Bad Faces in RSVPs}

The next result is one of the key results in the paper, since it allows us to eliminate
entire categories of paths by eliminating paths whose penultimate pivot point is on a ``bad face.''
Below, the distance between a face and a point is the standard Euclidean distance from
a point to the associated face.
\begin{theorem}[Bad Faces] \label {theorem:shortest}
Consider a variational problem with terminal point $v \in int(\reals^3_+)$
and consider an optimal triple $(x,y,z)$ to $v$. Let $w$ be the last point of
$z$ which is not in $int(\reals^3_+)$. Then
\begin{itemize}
  \item[(a)] If the variational problem is an RSVP, then there exists an optimal path for which $w$ is in
one of the two nearest faces to $v$.
  \item[(b)] If the variational problem is an MSVP, then there exists an optimal path for which $w$ is in
  the nearest face to $v$.
\end{itemize}
\end{theorem}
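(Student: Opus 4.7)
The plan is to exploit the rotational symmetry of the data to produce an alternative optimal path that lands on a nearer face. Given the optimal triple $(x,y,z)$ with last pre-interior point $w$ on some face $F_i$, I will cyclically permute the entire triple up to the time $t_w$ with $z(t_w)=w$, and then splice on a straight-line segment from $Pw$ to $v$ via Merging Paths (Lemma \ref{lem:opp}(3)), where $P$ is a cyclic permutation matrix chosen so that $Pw$ lies on a more desirable target face.

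The first ingredient is a symmetry observation. For the cyclic permutation matrix $P$, rotational symmetry of the data implies $PR=RP$ (the circulant property of $R$), $P\theta=\theta$ (since $\theta=\theta_0\mathbf{1}$), and $P^\top\Gamma^{-1}P=\Gamma^{-1}$ (from Lemma \ref{lemma:matrix}). Consequently $(Px,Py,Pz)$ is a valid $R$-regulation with the same cost as $(x,y,z)$, but now ending at $Pw$. In MSVP, the analogous identity extends to arbitrary permutation matrices $\Pi\in S_3$ because $r_1=r_2$ makes $R$ invariant under transpositions as well.

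The second ingredient is the cost of the final straight segment. By the convexity property (Lemma \ref{lem:opp}(1)) one has $\tilde{\mathcal{I}}_0(u,v)=\|v-u\|\,\|\theta\|-\langle v-u,\theta\rangle$, and Lemma \ref{lemma:matrix} yields the decomposition
\[
\|v-u\|^2 = \sigma^{-2}\bigl[(\gamma_0-\gamma_1)\|v-u\|_2^2 + \gamma_1(\mathbf{1}^\top(v-u))^2\bigr],
\]
together with $\langle v-u,\theta\rangle = \theta_0\sigma^{-2}(\gamma_0+2\gamma_1)\,\mathbf{1}^\top(v-u)$. Since any coordinate permutation of $w$ preserves its coordinate sum, the pieces of $\tilde{\mathcal{I}}_0$ depending on $\mathbf{1}^\top(v-u)$ are invariant when $w$ is replaced by $Pw$. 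Thus comparing $\tilde{\mathcal{I}}_0(w,v)$ with $\tilde{\mathcal{I}}_0(Pw,v)$ reduces to comparing the Euclidean norms $\|v-w\|_2^2$ and $\|v-Pw\|_2^2$.

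The crux is a direct computation: for $w=(w_1,w_2,0)\in F_3$,
\[
\|v-w\|_2^2-\|v-Pw\|_2^2 = 2\bigl[w_1(v_2-v_1)+w_2(v_3-v_2)\bigr],
\]
with an analogous expression for $P^2w$. For every ordering of $(v_1,v_2,v_3)$ in which $F_3$ is the farthest face, at least one of $P$ or $P^2$ yields a non-negative right-hand side and sends $w$ onto one of the two faces nearest to $v$; symmetric calculations handle $w\in F_1$ and $w\in F_2$. The spliced triple consisting of the rotated prefix up to $Pw$ followed by the direct line from $Pw$ to $v$ (which stays strictly inside $\reals_+^3$, since $v$ is interior and the line leaves the boundary immediately) is a valid Skorohod triple to $v$ with cost at most $I(v)$, hence optimal, and its last pre-interior point $Pw$ lies on one of the two nearest faces, proving (a). For part (b), the extra MSVP symmetries let us compose with a transposition if necessary so that the image lies on the unique nearest face. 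The main obstacle is the bookkeeping in the case analysis — running through each ordering of $(v_1,v_2,v_3)$ and each candidate starting face for $w$, and in each case verifying both the sign of the Euclidean comparison and that $Pw$ is genuinely the last pre-interior point of the newly constructed path.
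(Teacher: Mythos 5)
Your proposal is correct and follows essentially the same route as the paper: the explicit construction of the permuted triple $(Px,Py,Pz)$ is just an unwound proof of the symmetric-terminal-points property (Lemma \ref{lem:opp}, part 4) that the paper invokes directly, and your reduction of the direct-segment comparison to Euclidean distances via $\Gamma^{-1}=\sigma^{-2}[(\gamma_0-\gamma_1)I+\gamma_1\mathbf{1}\mathbf{1}^\top]$ is the same computation as the paper's evaluation of $\langle v,u^1-u^3\rangle$ and $\langle v,u^4-u^2\rangle$, with the transposition argument handling the MSVP case exactly as in the paper. The only material you leave implicit is the tie-breaking bookkeeping (e.g.\ $v_2=v_3$ or $w$ on an axis), which the paper also treats only in a short closing discussion.
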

\begin{proof} Let the terminal point be $v = (v_{1}, v_{2}, v_{3})$,
and without loss of generality,
assume $v_3 \ge v_2 \ge v_1 > 0$.
Define $u^{1}=(0,a,b)$, $u^{2}=(b,0,a)$, $u^{3}=(a,b,0)$ and $u^{4}=(0,b,a)$.
Furthermore, we assume that we cannot have both $a=0$ and $b=0$.

First, we want to compare the optimal cost from $u^i$ to $v$ for various
values of $i$. Of course, by convexity (Lemma \ref{lem:opp}, part 1), the optimal path from any $u^i$ to
$v$ must be a linear path. Now, recall that
$$\tilde{\mathcal{I}}_{0}(u^{i},v) = \|\theta\|\|v-u^{i}\|-\langle\theta,v-u^{i}\rangle,$$
for $i \in \{1, 2, 3, 4\}$.
It can be checked that
$$\langle\theta,v-u^{i}\rangle = \theta_{0}\sigma^{-2}(2\gamma_{1}+\gamma_{0})(v_{1}+v_{2}+v_{3}-a-b).$$
Hence this portion of the cost is independent of $i$.
So it is sufficient to analyze $\|v-u^{i}\|$ or, equivalently, $\|v-u^{i}\|^{2}$.
Note then that
$$\|v-u^{i}\|^{2} = \langle v,v\rangle + \langle u^{i},u^{i}\rangle - 2\langle v,u^{i}\rangle,$$
where $\langle u^{i},u^{i}\rangle = \sigma^{-2}[(a^{2}+b^{2})\gamma_{0}+2ab\gamma_{1}]$.
Therefore, the first two terms in $\|v-u^{i}\|^{2}$ are also independent of $i$.
So, finally, we have
$$\langle v, u^{1}-u^{3}\rangle = \sigma^{-2}[(\gamma_{0}-\gamma_{1})(v_{3}-v_{2})b+(\gamma_{0}-\gamma_{1})(v_{2}-v_{1})a] \geq 0,$$
where the inequality is due to our assumption on $v$ and Lemma \ref{lemma:matrix}.
This of course implies that $\langle v, u^{1} \rangle \ge \langle v, u^{3}\rangle$ with
equality iff $(v_{3}-v_{2})b+(v_{2}-v_{1})a = 0$. Therefore we have
\begin{equation} \label{eqn:mus}
\tilde{\mathcal{I}}_{0}(u^{3},v) \ge \tilde{\mathcal{I}}_{0}(u^{1},v).
\end{equation}
Now, let $w$ be the last point which is not in the interior of the octant, for an optimal path with
terminal point $v$.
By convexity, the segment $\overline{wv}$ must be linear.
Suppose then that $w$ is in $F_3$. If $v_3 \not= v_2$ then $F_3$ is
the furthest face from $v$. Thus $w$ must be of the form of
$u^3$ and accordingly we take $w=u^3=(a,b,0)$.
Now, consider the point $u^1=(0,a,b)$.
Note that the optimal cost from the origin to $u^3$ and the optimal cost from the  origin to $u^1$ must be equal due to rotational symmetry. By the merging and convexity properties of Lemma \ref{lem:opp}, to establish (a) it suffices to show that the
optimal cost from $u^1$ to $v$ using a direct path is less than or equal the cost from $u^3$ to $v$
via a direct path. This result was already demonstrated, as seen in (\ref{eqn:mus}).

At this point some discussion may be needed to see that part (a) of the theorem has been proved.
Suppose first that $v_3 > v_2 > v_1 > 0$. Then $F_3$ is the unique furthest face from
$v$. In this case we can strengthen the conclusion of part (a). In particular, the last boundary
point in an optimal path must emanate from one of the two nearest faces. Next, if $v_2=v_3$ then
all three faces can be classified as ``one of the two nearest'' and the statement of (a) holds
by default. Finally, if $v_3 > v_2=v_1 > 0$ there are two cases. If $b \not=0$, then $u^3$
is in the interior of $F_3$ and there must exist a strictly cheaper path through $u^1$.
If $b=0$, then the cost of the paths through $u^3$ and $u^1$ are the same. Either path is
considered to be via $F_1$(albeit on the boundary) which is one of the two nearest faces to $v$.
Hence, the result in (a) is still valid.

We now address part (b) of the theorem. First, it can be checked that
$$\langle v, u^{4}-u^{2}\rangle = \sigma^{-2}[b(v_{2}-v_{1})(\gamma_{0}-\gamma_{1})] \geq 0,$$
with equality if $b=0$ or $v_1=v_2$. Using the calculations from the RSVP case, we have
\begin{equation} \label{eqn:mus2}
\tilde{\mathcal{I}}_{0}(u^{2},v) \ge \tilde{\mathcal{I}}_{0}(u^{4},v).
\end{equation}
By mirror symmetry, the optimal cost from the origin to $u^3$ and the optimal cost from the  origin to $u^4$ must be equal.

The remainder of the proof is similar to the part (a) argument. Again, let $w$ be the last point which
is not in the interior of the octant, for an optimal path with terminal point $v$.
Suppose first that $w$ is in $F_3$. Unless $v_1=v_2=v_3$ (in which case the result holds trivially),
then $F_3$ is one of the two furthest faces from $v$. Recall that an MSVP is also an RSVP, so we can
apply part (a) of the theorem to conclude that there must exist an optimal path to $v$ with $w \in F_2$.
Without loss of generality, assume then that $w=u^2=(b,0,a)$.
By mirror symmetry, the optimal cost from the origin to $u^2$ and the optimal cost from the  origin to $u^4$ must be equal.
However, by (\ref{eqn:mus2}) the
optimal cost from $u^4$ to $v$ using a direct path is less than or equal the cost from $u^2$ to $v$
via a direct path. Hence, there exists a path for which $u^4$ is last point on the boundary of the octant,
with lower (or equal) cost to the path through $u^2$.

As in part (a), there are some special cases in part (b), specifically, when $v_1=v_2$ or $b=0$.
If $v_1=v_2$ then both $F_1$ and $F_2$ are considered the ``nearest face'' and the statement
holds immediately by applying part (a). If $v_2 > v_1$ and $b \not=0$, then there exists a strictly
cheaper path through $u^4$. If $v_2 > v_1$ and $b=0$, then $u^2$ and $u^4$ coincide and they are considered to be in
$F_1$, immediately implying the result.
\end{proof}

The easiest way of rephrasing the RSVP result is as follows. Consider a terminal point $v$ with
a unique farthest face. Then the last linear segment in an optimal path cannot emanate from the
interior of the farthest face. Similarly, for an MSVP with a unique nearest face to the terminal
point $v$, the last linear segment must emanate from the nearest face.

Note that the results in Theorem \ref{theorem:shortest} are proved only for $v$ in the interior of the octant.
The arguments in the proof of the theorem lead immediately to the following extensions for terminal points on the boundary of the octant.

\begin{remark} When $v_{1} = 0$ and $v_{2} > 0$, $v$ is in the interior of $F_1$. The first part of Theorem \ref{theorem:shortest} holds in the following sense: For an RSVP, there exists an optimal path whose last segment does not emanate from the interior of $F_3$. Furthermore, the last segment can not originate from $F_{2,3}$ although
it may originate from $F_{1,3}$.
\end{remark}

\begin{remark} When $v_{1} = v_{2} =0$ and $v_{3} > 0$, $v$ is on the axis $F_{1,2}$.
Again, the first part of Theorem \ref{theorem:shortest} holds.
In particular, there exists an optimal path whose last segment does not emanate from the interior
of the farthest face, which is $F_3$ in this case.
\end{remark}

Finally, we believe that the results of this section can be generalized to higher dimensional RSVPs and MSVPs
with minor modifications to the proofs.

\section{Further Optimal Path Characterizations} \label{sec:finite}

Our eventual goal is to show that optimal paths in three dimensions can be of only two types:
gradual paths and classic spirals. Demonstrating this requires the establishment of a number of
properties for paths with a finite or infinite number of linear segments. The results in this section
are related to paths with a finite number of segments, although some of these properties are used
later on to establish characterizations for paths with an infinite number of segments.

In various proofs in this section it is useful to consider paths (and the corresponding costs),
which are feasible, but not necessarily optimal. Therefore, we introduce the following definition.
\begin{definition} (Cost of a Feasible Path)
Given a one-piece feasible R-regulated triple $(x,y,z)$
with $z(0)=u$ and $z(T)=v$, define the corresponding cost along that path to be
$$H_{x}(u,v)= \frac{1}{2} \int^{T}_{0} \parallel \dot{x}(t)-\theta \parallel ^{2} 
dt.$$
Note that the pair $(y,z)$ uniquely defines $x$. 
\end{definition}

The next several results provide detailed characterizations of optimal paths. Unfortunately,
the overall connection will not be apparent until we bring them together to prove the main
results.

\begin{lemma}[The Switchback Lemma]   \label{lem:switchback}
Consider a VP with $\Gamma = I$. Let $v^{1}, v^{4} \in int(F_{1})$ and
$v^{2}, v^{3} \in int(F_{2})$. Then the path from $v^1$ to $v^4$ consisting
of the following linear segments is
strictly suboptimal: a direct segment from $v^1$ to $v^2$, a reflected segment from $v^2$
to $v^3$, a direct segment from $v^3$ to $v^4$.
\end{lemma}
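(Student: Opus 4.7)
The plan is to exhibit a strictly cheaper feasible alternative path from $v^1$ to $v^4$. With $\Gamma=I$, the cost of a linear $x$-segment with displacement $\Delta$ equals $c(\Delta)=\|\theta\|\,\|\Delta\|-\theta\cdot\Delta$, a convex and positively homogeneous functional, hence subadditive: $c(u)+c(v)\ge c(u+v)$ with equality only when $u,v$ are non-negative scalar multiples of a common vector.

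First I would write the switchback cost explicitly. Since the reflected middle segment on $F_2$ increments $y_2$ by some $\Delta y_2\ge 0$, the middle $x$-displacement equals $(v^3-v^2)-\Delta y_2\,r^{(2)}$, where $r^{(2)}$ denotes the second column of $R$, giving
\[
\text{Cost}_{\text{sw}}=c(v^2-v^1)+c\bigl((v^3-v^2)-\Delta y_2\,r^{(2)}\bigr)+c(v^4-v^3).
\]
Applying subadditivity threefold yields $\text{Cost}_{\text{sw}}\ge c\bigl((v^4-v^1)-\Delta y_2\,r^{(2)}\bigr)$, and the inequality is \emph{strict}. Equality would require the three $x$-segment unit directions $\hat u_1,\hat u_2,\hat u_3$ to coincide, but $(\hat u_1)_2=-v^1_2/\|v^2-v^1\|<0$ (segment~$1$ descends from $v^1\in F_1$ with $v^1_2>0$ to $v^2\in F_2$) while $(\hat u_3)_2=v^4_2/\|v^4-v^3\|>0$ (segment~$3$ ascends from $v^3\in F_2$ to $v^4\in F_1$ with $v^4_2>0$); these opposite $e_2$-signs rule out collinearity.

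Next I would exhibit the competitor: the one-piece path on $F_1$ from $v^1$ to $v^4$, obtained by running $z$ linearly while $y_1$ increases at constant rate from $0$ to some $\Delta y_1\ge 0$. This path is feasible since $z_1\equiv 0$ and $z_j(t)$ for $j=2,3$ is a convex combination of the strictly positive values $v^1_j,v^4_j$. Its cost is $\text{Cost}_{\text{alt}}=\min_{\Delta y_1\ge 0}c\bigl((v^4-v^1)-\Delta y_1\,r^{(1)}\bigr)$. Combining the strict lower bound from step two with $\text{Cost}_{\text{alt}}\le c\bigl((v^4-v^1)-\Delta y_2\,r^{(2)}\bigr)$ would give $\text{Cost}_{\text{sw}}>\text{Cost}_{\text{alt}}$, establishing strict suboptimality.

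The main obstacle is the last comparison between $\text{Cost}_{\text{alt}}$ and the switchback's lower bound. Under the paper's rotational symmetry, $\|r^{(1)}\|=\|r^{(2)}\|$ and $\theta\cdot r^{(1)}=\theta\cdot r^{(2)}$, so $c\bigl((v^4-v^1)-\Delta y\,r^{(i)}\bigr)$ depends on $i$ only through $(v^4-v^1)\cdot r^{(i)}$. The required inequality reduces to algebraic control of $(v^4-v^1)\cdot(r^{(1)}-r^{(2)})=(v^4_2-v^1_2)(r_1-1)+(v^4_3-v^1_3)(r_2-r_1)$ together with an appropriate choice of $\Delta y_1$; the natural choice $\Delta y_1=\Delta y_2$ works directly when this quantity has a favorable sign, while adverse configurations may require optimizing $\Delta y_1$ freely or substituting a different competitor (e.g., a two-piece path via an axis), and the degenerate sub-case $\Delta y_2=0$ is handled by comparison with the direct free path from $v^1$ to $v^4$.
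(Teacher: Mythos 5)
There is a genuine gap, and it sits exactly at the step you flag as ``the main obstacle.'' Your chain is $\mathrm{Cost}_{\mathrm{sw}} > c\bigl((v^4-v^1)-\Delta y_2\,r^{(2)}\bigr) \ge \mathrm{Cost}_{\mathrm{alt}}$, but the second inequality is not proved and is in fact false in general: since the collapsed displacement is reflected along $r^{(2)}$ while your competitor can only reflect along $r^{(1)}$, whenever $(v^4-v^1)\cdot r^{(2)}$ is substantially larger than $(v^4-v^1)\cdot r^{(1)}$ (e.g.\ $v^4_2<v^1_2$, $v^4_3>v^1_3$, with $r_1>0$, $r_2=0$ and moderate $\Delta y_2$) one gets $c\bigl((v^4-v^1)-\Delta y_2\,r^{(2)}\bigr)<\min_{\Delta y_1\ge 0}c\bigl((v^4-v^1)-\Delta y_1\,r^{(1)}\bigr)$, so the subadditivity lower bound is too lossy to dominate the in-$F_1$ competitor. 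Your own hedge (``optimizing $\Delta y_1$ freely or substituting a different competitor'') is precisely the unproved content of the lemma, so the argument as written does not close. A secondary issue: you invoke rotational symmetry ($\|r^{(1)}\|=\|r^{(2)}\|$, $\theta\cdot r^{(1)}=\theta\cdot r^{(2)}$), whereas the lemma is stated for any VP with $\Gamma=I$ and makes no symmetry assumption on $R$ or $\theta$.

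The paper avoids the global comparison entirely by a local translation argument: assuming $v^2_1\ge v^3_1$, slide the reflected middle segment within $F_2$ to $\tilde{v}^2=(v^2_1-v^3_1,0,v^2_3)$, $\tilde{v}^3=(0,0,v^3_3)$, so that $\tilde{v}^3-\tilde{v}^2=v^3-v^2$ and the reflected piece keeps exactly the same cost (translation within the face preserves feasibility and the $x$-displacement). The drift terms of the two direct pieces also cancel, so the comparison reduces, with $\Gamma=I$, to
$\sqrt{p+(v^2_1)^2}+\sqrt{q+(v^3_1)^2}>\sqrt{p+(v^2_1-v^3_1)^2}+\sqrt{q}$
with $p,q>0$, which is immediate; the case $v^2_1<v^3_1$ is symmetric. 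This produces a strictly cheaper feasible path without ever comparing reflections along different faces, and it needs neither rotational symmetry nor any choice of competitor reflected in $F_1$. If you want to salvage your approach, you would have to prove the missing comparison (or supply and analyze the alternative competitors) in all configurations of $v^4-v^1$ and $\Delta y_2$, which is essentially a new and harder problem than the lemma itself.
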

\begin{proof}
Let $v^{1} = ( 0,v^{1}_{2}, v^{1}_{3})$, $v^{2} = (v^{2}_{1}, 0, v^{2}_{3})$, $v^{3} = (v^{3}_{1}, 0, v^{3}_{3})$, and  $v^{4} = (0,v^{4}_{2}, v^{4}_{3})$.  Assume first that $v^{2}_{1} \geq v^{3}_{1}.$
Define $\tilde{v}^{2} = (v^{2}_{1}-v^{3}_{1}, 0, v^{2}_{3})$ and $\tilde{v}^{3} = (0, 0, v^{3}_{3})$ which are both in $F_2$. Consider a new path from $v^1$ to
$v^4$ as follows: a direct segment from $v^1$ to $\tilde{v}^2$, a reflected segment
from $\tilde{v}^2$
to $\tilde{v}^3$, a direct segment from $\tilde{v}^3$ to $v^4$. We show that
the new path has a strictly lower cost than the original path.
 Notice that $v^{3}-v^{2} = \tilde{v}^{3} - \tilde{v}^{2}$ so it suffices to compare $\tilde{\mathcal{I}}_{0}(v^{1},v^2) + \tilde{\mathcal{I}}_{0}(v^{3},v^4)$ with  $\tilde{\mathcal{I}}_{0}(v^{1},\tilde{v}^2) + \tilde{\mathcal{I}}_{0}(\tilde{v}^{3},v^4)$.
By definition $$\tilde{\mathcal{I}}_{0}(v^{1},v^2) + \tilde{\mathcal{I}}_{0}(v^{3},v^4) = \| \theta \| ( \|v^{2}-v^{1}\| + \|v^{4}-v^{3}\|) - \langle \theta, v^{2}-v^{1}+v^{4}-v^{3} \rangle$$ and $$\tilde{\mathcal{I}}_{0}(v^{1}, \tilde{v}^2) + \tilde{\mathcal{I}}_{0}(\tilde{v}^{3},v^4) = \| \theta \| ( \| \tilde{v}^{2}-v^{1} \| + \|v^{4}-\tilde{v}^{3}\|) - \langle \theta, \tilde{v}^{2}-v^{1}+v^{4}-\tilde{v}^{3} \rangle.$$
It is easy to check that $$\langle \theta, v^{2}-v^{1}+v^{4}-v^{3} \rangle = \langle \theta, \tilde{v}^{2}-v^{1}+v^{4}-\tilde{v}^{3} \rangle$$ so it is enough to compare $\|v^{2}-v^{1}\| + \|v^{4}-v^{3}\|$ with $\|\tilde{v}^{2}-v^{1} \| + \|v^{4}-\tilde{v}^{3}\|$. Now when
$\Gamma=I$, we have $$(\|v^{2}-v^{1}\| + \|v^{4}-v^{3}\|) - (\|\tilde{v}^{2}-v^{1} \| + \|v^{4}-\tilde{v}^{3}\|) = \sqrt{p+(v^{2}_{1})^{2}} + \sqrt{q+(v^{3}_{1})^{2}}-\sqrt{p+(v^{2}_{1}-v^{3}_{1})^{2}}-\sqrt{q} > 0$$ where $p = (v^{1}_{2})^{2} + (v^{2}_{3}-v^{1}_{3})^{2} > 0$ and $q = (v^{4}_{2})^{2}+(v^{4}_{3}-v^{3}_{3})^{2} > 0$.
Thus, the newly constructed path has a strictly lower cost.
If $v^{2}_{1} < v^{3}_{1}$ then re-define $\tilde{v}^{2} = (0, 0, v^{2}_{3})$ and $\tilde{v}^{3} = (v^{3}_{1}-v^{2}_{1}, 0, v^{3}_{3})$. The proof of the corresponding result for this case is analogous to the first case.
\end{proof}
This result is the key to showing that ``exotic'' paths which seem intuitively ``bad''
are indeed suboptimal. In particular, it shows that paths which switch back and
forth between two faces are not cost effective. Analogous arguments show that
the lemma holds for any pair of two-dimensional faces.

The next result is important in establishing the optimality of gradual paths.
In this and later proofs, we use the standard notation $e_3 = (0,0,1)$.

\begin{lemma} \label{axis_eliminate}
Consider an RSVP with $\Gamma=I$ and $r_{2} \geq 0$. Let $v^{1} = (0, v^{1}_{2}, v^{1}_{3})$ and $v^{2} = (v^{2}_{1}, 0, v^{2}_{3})$, such that   $v^1 \in int(F_{1})$ and $v^2 \in  int(F_{2})$. Then the path from $v^1$ to $e_3$ consisting
of the following linear segments is
strictly suboptimal: a direct segment from $v^1$ to $v^2$ and a reflected segment from $v^2$
to $e_3$.
\end{lemma}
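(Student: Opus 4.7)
My approach is to exhibit a strictly cheaper alternative path from $v^1$ to $e_3$. The natural candidate is the single reflected path along $F_1$ from $v^1$ directly to $e_3$, which is feasible because both endpoints lie in $F_1$. If that particular alternative proves too tight, one can enlarge the family to two-piece paths $v^1 \to w \to e_3$ with $w \in F_1$ or $w \in F_{1,2}$ and optimize over $w$. Either way, the proof reduces to an explicit cost comparison.

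Using $\Gamma = I$, each reflected cost $\tilde{\mathcal{I}}_i(u,v)$ has a clean closed form: optimizing over the reflection rate corresponds to projecting the driving direction $\dot z - \theta$ orthogonally onto the complement of the reflection column $R_i = R e_i$, yielding
\[
\tilde{\mathcal{I}}_i(u,v) = \|\pi_i(v-u)\|\cdot\|\pi_i\theta\| - \langle v-u,\, \pi_i\theta\rangle, \qquad \pi_i = I - \frac{R_i R_i^{\prime}}{\|R_i\|^2},
\]
valid whenever the optimal pushing rate is nonnegative. Rotational symmetry forces $R_1\cdot\theta = R_2\cdot\theta = \theta_0(1+r_1+r_2)$ and $\|R_1\| = \|R_2\|$, so in particular $\|\pi_1\theta\| = \|\pi_2\theta\|$. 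Forming the difference $\tilde{\mathcal{I}}_0(v^1,v^2) + \tilde{\mathcal{I}}_2(v^2,e_3) - \tilde{\mathcal{I}}_1(v^1,e_3)$, the linear-in-$\theta$ terms telescope to zero since $(v^2 - v^1) + (e_3 - v^2) = e_3 - v^1$, and a Minkowski triangle inequality shows that the remaining norm part is strictly positive, with the slack coming from $v^2_1 > 0$, which forces a genuine detour through $\mathrm{int}(F_2)$ rather than the straight $F_1$-route.

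The main obstacle is controlling the residual correction $\tfrac{\theta_0(1+r_1+r_2)}{1+r_1^2+r_2^2}\bigl[\,r_1(v^1_2 + 1 - v^2_3) - r_2(v^2_1 + 1 - v^1_3)\,\bigr]$ so that it does not erase the Minkowski slack. This is precisely where the hypothesis $r_2 \ge 0$ enters: geometrically, it says that reflection on $F_2$ pushes in the $+x_1$ direction, which opposes the required decrease of $x_1$ from $v^2_1 > 0$ down to $0$ at $e_3$, so the $F_2$ reflection cannot supply a rebate large enough to offset the detour cost. I expect to finish the argument with a short case split on the signs of $v^1_2 + 1 - v^2_3$ and $v^2_1 + 1 - v^1_3$, combined with the strict Minkowski gap; and if the one-piece alternative is too tight in some regime, I would optimize the intermediate point $w$ in a two-piece alternative $v^1 \to w \to e_3$ to absorb the correction term and preserve the strict inequality.
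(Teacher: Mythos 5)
Your proposal has a genuine gap: it is a plan for a cost comparison rather than a proof, and the two steps you lean on do not go through as stated. First, the linear-in-$\theta$ terms do \emph{not} telescope: the three pieces carry different projections ($I$, $\pi_2$, $\pi_1$), so what cancels is only the unprojected part, and what is left over is exactly the residual $\tfrac{\theta_0(1+r_1+r_2)}{1+r_1^2+r_2^2}\bigl[r_1(v^1_2+1-v^2_3)-r_2(v^2_1+1-v^1_3)\bigr]$ you mention later --- controlling its sign is the entire difficulty, and the lemma gives you no sign information on $r_1$ (only $r_2\ge 0$ is assumed) and none on $\theta_0$, so the promised case split is not obviously completable. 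Second, the ``Minkowski'' step is not valid: the norm terms are $\|\theta\|\,\|v^2-v^1\|$, $\|\pi_2\theta\|\,\|\pi_2(e_3-v^2)\|$ and $\|\pi_1\theta\|\,\|\pi_1(e_3-v^1)\|$, with different projections and different $\theta$-weights, so the triangle inequality does not directly yield positivity of their combination. Third, your closed-form expressions for $\tilde{\mathcal{I}}_1$, $\tilde{\mathcal{I}}_2$ are only valid when the optimal pushing rate is nonnegative, a caveat you note but never discharge; and more fundamentally, the one-piece $F_1$-reflected path need not be cheaper than the given two-piece path in all regimes, so even a correct computation along your lines could fail to prove strict suboptimality. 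Your fallback (``optimize over an intermediate $w\in F_{1,2}$'') is in fact where the real proof lives, but you do not execute it.

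For contrast, the paper's argument avoids all of these issues by never invoking the projection formulas. It takes the optimal triple $(x^1,y^1,z^1)$ for the reflected segment from $v^2$ to $e_3$, keeps the \emph{same} pushing process $y^1$, and builds a comparison path through the axis point $\tilde v^2=(0,0,v^2_3)$: a direct segment from $v^1$ to $\tilde v^2$ (zero first coordinate) followed by a segment along $F_{1,2}$ to $e_3$ with $\tilde x^1=t(-r_2y^1_2,\,-y^1_2,\,x^1_3)'$. Because $\Gamma=I$, the cost difference decomposes coordinatewise, only the first coordinates differ, the $\theta_0$ cross terms cancel exactly between the two segments, and the difference is bounded below by $\tfrac12\bigl[(v^2_1)^2/T^1+(v^2_1)^2/T^2+2r_2v^2_1y^1_2\bigr]>0$, with $r_2\ge0$ entering only through the last term. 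If you want to salvage your approach, the cleanest route is to abandon the one-piece $F_1$ comparison and carry out your own fallback with the specific choice $w=\tilde v^2$ and the reused pushing process, which is precisely the paper's construction.
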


\begin{proof}
Define $\tilde{v}^{2} = (0, 0, v^{2}_{3})$.
We show that
$$\tilde{\mathcal{I}}_{0}(v^{1},v^2) + \tilde{\mathcal{I}}_{2}(v^{2},e_3) >
\tilde{\mathcal{I}}_{0}(v^{1},\tilde{v}^{2}) + \tilde{\mathcal{I}}_{2}(\tilde{v}^{2}, e_3),$$
implying that there exists a better path from $v^1$ to $e_3$, via $\tilde{v}^2$.

Suppose $(x^{1}, y^{1}, z^{1})$ is an optimal triple from $v^2$ to $e_3$ with
corresponding time $T^1$, and let $(x^{2}, y^{2}, z^{2})$ be the optimal triple
from $v^1$ to $v^2$ with
corresponding time $T^2$ (since this path is direct $x^2=z^2$).
Set $(x^{1}(t), y^{1}(t), z^{1}(t)) = (\dot{x}^1, \dot{y}^1, \dot{z}^1)t$, $x^{2}(t) = \dot{x}^{2}t$, with $\dot{z}^1 = (z^{1}_{1}, z^{1}_{2}, z^{1}_{3})'$. Notice that $\dot{y}^{1}= (0, y^{1}_{2},0)'$ and $z^{1}_{2} = 0$. Let $\tilde{z}^{1}(t) = t(0, 0, z^{1}_{3})'$ and $\tilde{x}^{1}(t) = t(-r_2 y^{1}_{2}, -y^1_2, x^{1}_{3})'$. It can be checked that $(\tilde{x}^{1}, y^{1}, \tilde{z}^{1})$ is a feasible
triple from $\tilde{v}^2$ to $e_3$ with $\tilde{T}^1 = T^1$.
Similarly, setting $\tilde{x}^{2}(t) = t(0, x^{2}_{2}, x^{2}_{3})'$ yields the feasible triple
$(\tilde{x}^{2}, 0, \tilde{x}^{2})$ from $v^1$ to $\tilde{v}^2$, with $\tilde{T}^2 = T^2$.

Using the paths defined above we have:
 $$\tilde{\mathcal{I}}_{2}(v^{2}, e_3) = \frac{1}{2} T^{1}[(z^{1}_{1}-r_{2}y^{1}_{2}-\theta_{0})^{2} + (-y^{1}_{2}-\theta_{0})^{2} + (z^{1}_{3}-r_{1}y^{1}_{2}-\theta_{0})^{2}]$$ and $$\tilde{\mathcal{I}}_{2}(\tilde{v}^{2}, e_3) \leq \frac{1}{2} T^{1}[(-r_{2}y^{1}_{2}-\theta_{0})^{2} + (-y^{1}_{2}-\theta_{0})^{2} + (z^{1}_{3}-r_{1}y^{1}_{2}-\theta_{0})^{2}],$$
where the inequality is due to the fact that $(\tilde{x}^{1}, y^{1}, \tilde{z}^{1})$ need not be optimal.

Recall that $\dot{z}^{1}T^{1} = e_{3}-v^{2}.$ Finally, we expand the direct path costs similarly
and compute:
\begin{align*}
&  \hspace{-0.5in}  \tilde{\mathcal{I}}_{0}(v^{1},v^2) + \tilde{\mathcal{I}}_{2}(v^{2}, e_3) - \tilde{\mathcal{I}}_{0}(v^{1},\tilde{v}^{2}) - \tilde{\mathcal{I}}_{2}(\tilde{v}^{2}, e_3) \\
\geq \, &\frac{1}{2}T^{1}\left[\left(-\frac{v^{2}_{1}}{T^{1}}-r_{2}y^{1}_{2}-\theta_{0}\right)^{2} - (-r_{2}y^{1}_{2}-\theta_{0})^{2}\right] + \frac{1}{2}T^{2}\left[\left(\frac{v^{2}_{1}}{T^{2}}-\theta_{0}\right)^{2} - (-\theta_{0})^2\right] \\
= &  \;  \frac{1}{2}\left[\frac{(v^{2}_{1})^{2}}{T^{1}}+\frac{(v^{2}_{1})^{2}}{T^{2}}+2r_{2}v^{2}_{1}y^1_2
\right].
\end{align*}
Since $r_2 \ge 0$, $y^1_2 \ge 0$, and $v_1^2$ can assumed to be positive,
the last term is strictly positive, establishing the result. (If $v_1^2=0$ then the theorem
holds trivially by convexity.)
\end{proof}
In general, we apply this result under the condition that $r_{1}, r_{2} \geq 0$. By symmetry it is easily seen
that the result applies to rotational variations of the paths involved in the result.

\begin{lemma} \label{differentR}
Consider an RSVP and the points $v=(0,v_{1},v_{2})$ and $\bar{v}=(v_{1},0,v_{2})$.
If $r_{1} \geq r_{2}$ and $v_{2} \geq v_{1}$, then $$\tilde{\mathcal{I}}_1(v) \geq \tilde{\mathcal{I}}_2(\bar{v}).$$
\end{lemma}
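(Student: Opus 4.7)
The plan is to reduce the inequality to a pointwise comparison of linear one-piece reflected-path integrands on a single face, and then finish with a short algebraic check using the explicit form of $\Gamma^{-1}$ from Lemma \ref{lemma:matrix}.

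First I would use rotational symmetry to move both sides onto $F_2$. The cyclic rotation $\sigma:(x_1,x_2,x_3)\mapsto(x_3,x_1,x_2)$ carries $F_1$ onto $F_2$, sends the first column of $R$ (the reflection direction used on $F_1$) to the second column (the reflection direction used on $F_2$), fixes $\theta$, and preserves $\|\cdot\|$ because $\Gamma^{-1}$ is cyclically symmetric by Lemma \ref{lemma:matrix}. Hence a one-piece reflected path on $F_1$ of cost $c$ terminating at $v=(0,v_1,v_2)$ corresponds under $\sigma$ to an equal-cost one-piece reflected path on $F_2$ terminating at $w_2:=\sigma v=(v_2,0,v_1)$; this is the one-piece analogue of part 4 of Lemma \ref{lem:opp}. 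So it suffices to prove
$$\tilde{\mathcal{I}}_2(v_2,0,v_1)\;\ge\;\tilde{\mathcal{I}}_2(v_1,0,v_2).$$

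Second, by Lemma \ref{refconvex} any optimal one-piece reflected path can be assumed linear, so for $w\in F_2$
$$\tilde{\mathcal{I}}_2(w)=\inf_{T>0,\,\beta\ge 0}\frac{1}{2T}\,\bigl\|w-\beta(r_2,1,r_1)'-T\theta\bigr\|^2,$$
where $\beta=y_2(T)$ is the total reflection mass. I would then compare the two integrands at common parameters $(T,\beta)$: setting $a:=(v_1,0,v_2)-\beta(r_2,1,r_1)'-T\theta$ and $b:=(v_2,0,v_1)-\beta(r_2,1,r_1)'-T\theta$, one immediately has $a_2=b_2$, $a_1+a_3=b_1+b_3$, and $b_1-a_1=v_2-v_1=-(b_3-a_3)$. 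Expanding $\|b\|^2-\|a\|^2$ via the $(\gamma_0,\gamma_1)$ form of $\Gamma^{-1}$, all contributions involving the second coordinate cancel, the cross terms built from $a_1+a_3$ and $b_1+b_3$ cancel, and what remains collapses to
$$\|b\|^2-\|a\|^2 \;=\; 2\sigma^{-2}\,(v_2-v_1)\,\beta\,(r_1-r_2)\,(\gamma_0-\gamma_1),$$
which is non-negative under $v_2\ge v_1$, $r_1\ge r_2$, $\beta\ge 0$, and $\gamma_0>\gamma_1$.

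Finally, the pointwise inequality $\|b\|^2\ge\|a\|^2$ gives $\tfrac{1}{2T}\|b\|^2\ge\tfrac{1}{2T}\|a\|^2$ for every admissible $(T,\beta)$; taking infima yields $\tilde{\mathcal{I}}_2(w_2)\ge\tilde{\mathcal{I}}_2(\bar v)$, and combining with the first step proves the lemma. The principal conceptual step is the rotation, which forces both sides to use the same reflection vector $(r_2,1,r_1)'$ and the same quadratic form; after that, the only real work is the bookkeeping to see that the $\gamma_1$ off-diagonal cross terms cancel against the diagonal $\gamma_0$ terms in exactly the right way, which should be routine given the coordinate-swap structure of $a$ and $b$.
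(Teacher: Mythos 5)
Your proof is correct and essentially the paper's own argument: the paper transplants the optimal $F_1$-reflected triple onto $F_2$ (same reflection mass $y_1^*T^*$ and time $T^*$, with the $z$-velocities permuted) and computes exactly the same cost difference $\sigma^{-2}(\gamma_0-\gamma_1)(r_1-r_2)(z_2^*-z_1^*)y_1^*T^*\ge 0$, so your rotation-to-$F_2$ step followed by the same-face comparison at common $(T,\beta)$ is that one construction split into two stages. Both versions rest on the same implicit fact (linear $(z,y)$ paths suffice for one-piece reflected costs, via convexity/Lemma \ref{refconvex}), so no gap is introduced beyond what the paper itself assumes.
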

\begin{proof}
Let $(x^{*}(t),y^{*}(t),z^{*}(t))$ be an optimal triple corresponding to $\tilde{\mathcal{I}}_1(v)$,
and let $T^*$ be the corresponding optimal time. It is clear that $\dot{z}^{*}(t)$ and $\dot{y}^{*}(t)$
are constant functions due to the convexity property of Lemma \ref{lem:opp}. Thus, we set
$z^*: = \dot{z}^{*}(t)=(0,z_{1}^{*}, z_{2}^{*})'$ and $y^*=\dot{y}^{*}(t)=(y_{1}^{*},0,0)'$. Therefore, $\dot{x}^{*}(t)=\dot{z}^{*}-R\dot{y}^{*}$ and $z^{*}$ satisfies $z_{1}^{*}T^{*} = v_{1}$, and $z_{2}^{*}T^{*} = v_{2}$. It is clear that $z_{1}^{*} \leq z_{2}^{*}$ since $v_{1} \leq v_{2}$.

Setting $\dot{\bar{z}}(t)=\bar{z}=(z_{1}^{*}, 0, z_{2}^{*})'$, $\dot{\bar{y}}(t)=\bar{y}=(0,y_{1}^{*},0)'$ and $\dot{\bar{x}}(t)=\dot{\bar{z}}-R \dot{\bar{y}}$, we note that $(\bar{x}(t),\bar{y}(t),\bar{z}(t))$ is
a feasible one-piece triple from the origin to $\bar{v}$ where $\bar{z}(t) \in F_2$ for $t \ge 0$ and $T^*$ is the corresponding time for this path to reach $\bar{v}$.
Then $$\tilde{\mathcal{I}}_1(v)=\frac{1}{2} \|\dot{z}^{*}(t)-R \dot{y}^{*}(t)-\theta\|^{2}T^{*}=\frac{1}{2} \|z^{*}-R y^{*}-\theta\|^{2}T^{*}.$$
On the other hand, since $(\bar{x}(t),\bar{y}(t),\bar{z}(t))$ is feasible,
$$\tilde{\mathcal{I}}_2(\bar{v}) \leq H_{x}(\bar{v}) = \frac{1}{2}\|\bar{z}-R \bar{y}-\theta\|^{2}T^{*}.$$
So $$\tilde{\mathcal{I}}_1(v) - \tilde{\mathcal{I}}_2(\bar{v}) \geq \frac{1}{2}(\|z^{*}-Ry^{*}-\theta\|^{2} - \|\bar{z}-R \bar{y}-\theta\|^{2})T^{*} = \sigma^{-2}(r_{1} - r_{2})(\gamma_{0} - \gamma_{1})(z_{2}^{*}-z_{1}^{*})y_{1}^{*}T^{*} \geq 0.$$
The last inequality follows from our assumptions and because $\gamma_0 > \gamma_1$, due to Lemma \ref{lemma:matrix}.
\end{proof}

Our study of optimal path characterizations rests crucially on comparing the paths depicted in
Figure \ref{fig:redblue}. For paths with both a finite number of segments and an infinite number of segments,
we wish to establish that the blue path is ``cheaper'' than the red path. In most instances
it is difficult to establish this as a general property, so we provide a sufficient condition
for this blue-path-red-path condition to hold. For specific numerical instances of an RSVP,
this condition is easily verified. Furthermore, a combination of numerical
and analytical arguments can be used to show that the condition holds in general on $R_f$, defined
below.

\begin{figure}
\begin{centering}
\includegraphics[width=8cm]{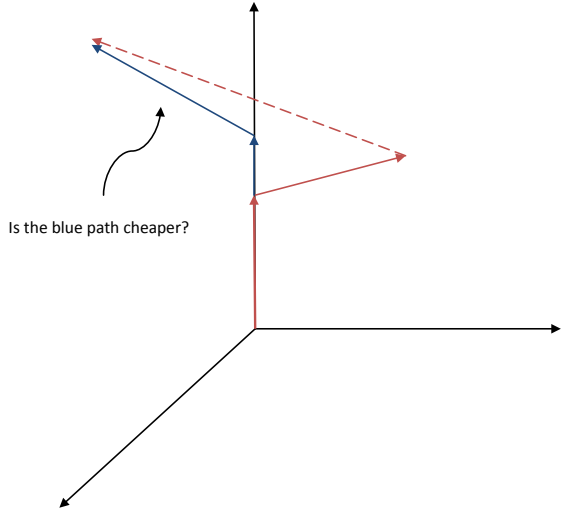}\\
\caption{Red Path - Blue Path Comparison} \label{fig:redblue}
\end{centering}
\end{figure}

Let $R_f = \{ (r_1,r_2) \in \R^2_+ \; | \: r_1 > r_2, -1 < r_1 + r_2< 2 \}$.

\begin{condition} For an RSVP with reflection matrix $R$,
$(r_1,r_2) \in R_f$ and
$$(1+r_{2}^2)(1+r_{1}^2-r_{2}-r_{1}r_{2})^{2} \geq 2(r_{1}r_{2})^{2}(1+r_{1}^{2}+r_{2}^{2}-r_{1}-r_{2}-r_{1}r_{2}).$$
\end{condition}

This condition is required to prove the next two results. Lemma \ref{lem:case3} is proved
in the Appendix.

\begin{lemma} \label{lem:case3}
Given an RSVP with $\Gamma=I$, $\theta_0 < 0$, and $r_1 > r_2 \ge 0$, define points $v=(v_{1},0,\bar{v}_{3})$
and $v'=(0,v_{2},v_{3})$ with $\bar{v}_3 > 0$ and $v_i > 0$ for $i=1,2,3$.
Suppose further that $v_{2} < v_{3}$  and $v_1 < \bar{v}_3$. Then
\begin{itemize}
  \item[(a)] If Condition 1 holds, then for all $k \in [0,1],$  $\tilde{\mathcal{I}}_2(v) > \tilde{\mathcal{I}}_2(k \bar{v}_3e_{3})$. 
  \item[(b)] There exists a $k \in [0,1]$ such that 
  $ \tilde{\mathcal{I}}_{0}(v,v') \geq \tilde{\mathcal{I}}_1(k \bar{v}_3e_{3},v').$
\end{itemize}
\end{lemma}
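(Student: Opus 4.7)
My plan divides naturally along the two claims. For part~(a) I reduce to a single extremal case using scaling and then compute both sides explicitly; for part~(b) I construct a specific feasible path by copying the Brownian velocity of the optimal direct path from $v$ to $v'$ and adding a compensating reflection.

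For part~(a), the scaling property (part~2 of Lemma~\ref{lem:opp}) transfers directly to one-piece $F_2$-reflected paths: if $(x,y,z)$ is optimal for target $\bar v_3 e_3$ on $F_2$, then $(kx(t/k), ky(t/k), kz(t/k))$ is feasible for target $k\bar v_3 e_3$ with cost $k\,\tilde{\mathcal{I}}_2(\bar v_3 e_3)$, and vice versa. Hence $\tilde{\mathcal{I}}_2(k\bar v_3 e_3) = k\,\tilde{\mathcal{I}}_2(\bar v_3 e_3)$, which is linear and strictly increasing in $k$ on $[0,1]$, so it suffices to prove the extremal inequality $\tilde{\mathcal{I}}_2(v) > \tilde{\mathcal{I}}_2(\bar v_3 e_3)$. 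On each side, a one-piece $F_2$-reflected triple is fully parametrized by a time $T>0$ and a scalar reflection rate $\dot y=(0,y,0)$ with $y\ge 0$; the endpoint fixes $\dot z$, and the cost $\tfrac{T}{2}\|\dot z - R\dot y - \theta\|^2$ is a convex quadratic in $(y,1/T)$. Solving the first-order conditions gives explicit $(y^*,T^*)$ on both sides. Substituting back and forming the difference, the gap reduces after simplification to a polynomial inequality in $(r_1,r_2)$ which is precisely Condition~1; indeed this is how Condition~1 was derived.

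For part~(b), let $T_d$ and $\dot x_d:=(v'-v)/T_d$ be the optimal time and constant Brownian velocity realizing $\tilde{\mathcal{I}}_0(v,v')$. Define a one-piece $F_1$-reflected triple by using this same $\dot x_d$ together with reflection rate $\dot y_1 = v_1/T_d$. Since $\dot x_{d,1} = -v_1/T_d$, one has $\dot z_1=0$, so the path stays on $F_1$. Matching the remaining coordinate endpoint conditions forces
\[
T \;=\; \frac{T_d\,v_2}{v_2 + r_1 v_1} \;\le\; T_d, \qquad
k \;=\; \frac{v_2\bar v_3 + v_1(r_1 v_3 - r_2 v_2)}{\bar v_3(v_2 + r_1 v_1)}.
\]
Because the Brownian velocity is unchanged, the rate-of-cost is identical, so the total cost of this feasible path equals $(T/T_d)\,\tilde{\mathcal{I}}_0(v,v') \le \tilde{\mathcal{I}}_0(v,v')$, yielding the required inequality the moment $k\in[0,1]$. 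The hypotheses $r_1>r_2\ge 0$ and $v_3>v_2$ immediately give $k>0$, and $k\le 1$ is equivalent to $r_1(v_3-\bar v_3)\le r_2 v_2$. In the complementary regime in which $v_3$ is so large that the formula gives $k>1$, I cap at $k=1$, drop the velocity-matching requirement, and re-optimize the reflection rate $y_1$ in the resulting one-dimensional convex quadratic (and then in $T$); a direct computation then shows the minimized cost at $k=1$ still lies at or below $\tilde{\mathcal{I}}_0(v,v')$.

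The main obstacle is the algebraic bookkeeping in part~(a): even though both minima are in closed form, many cross terms involving the optimizers $y^*$ and $T^*$ must be tracked before the difference collapses to exactly the polynomial form of Condition~1. Part~(b) is conceptually much cleaner; the only subtle point is the complementary regime $k>1$, where the elegant ``match-the-Brownian-velocity'' construction is no longer compatible with $k\le 1$ and a separate quadratic minimization at $k=1$ must be carried out.
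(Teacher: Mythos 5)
Part (b) of your proposal is correct and takes a genuinely different, and in fact cleaner, route than the paper. Your velocity-matching construction (copy $\dot x_d=(v'-v)/T_d$, add reflection rate $v_1/T_d$ along the first column of $R$, and read off $T=T_d v_2/(v_2+r_1v_1)$ and the stated $k$) is sound: its cost is $(T/T_d)\,\tilde{\mathcal{I}}_0(v,v')\le\tilde{\mathcal{I}}_0(v,v')$, the bound $k>0$ follows from $r_1>r_2\ge0$ and $v_3>v_2$, and $k\le1$ is exactly $r_1(v_3-\bar v_3)\le r_2v_2$, which in particular covers every case with $v_3\le\bar v_3$. In the complementary regime one necessarily has $v_3>\bar v_3$, and there your ``cap at $k=1$ and re-optimize the reflection rate'' fallback does check out: keeping the direct path's travel time fixed, the cost difference is a concave quadratic in the reflection amount whose maximizer is nonnegative and whose maximum is bounded below by $(2r_1+2r_2+2r_1r_2)(T^*)^2\ge0$, using $2T^*v_1-v_1^2\le (T^*)^2$; this is precisely the paper's easy ``$v_3\ge\bar v_3$'' computation. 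The paper instead reserves its velocity-matching construction (with the specific choice $k=1-r_2v_1$) for $v_3<\bar v_3$ and must then close a dichotomy between two sufficient inequalities by a contradiction argument invoking $r_1\ge 2/\sqrt3$ and the stability bound $r_1+r_2<2$; your case split avoids that hard sub-case entirely.

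Part (a), however, has a genuine gap. The scaling reduction to the single inequality $\tilde{\mathcal{I}}_2(v)>\tilde{\mathcal{I}}_2(\bar v_3 e_3)$ is fine, but the final step --- ``forming the difference, the gap reduces after simplification to a polynomial inequality in $(r_1,r_2)$ which is precisely Condition 1'' --- cannot be carried out as stated. Writing $G(v_1)=\tilde{\mathcal{I}}_2\bigl((v_1,0,\bar v_3)\bigr)=\|Av\|\,\|A\theta\|-\langle Av,A\theta\rangle$, where $A=I-R_2(R_2'R_2)^{-1}R_2'$ and $R_2=(r_2,1,r_1)'$ is the second column of $R$, the difference $G(v_1)-G(0)$ genuinely depends on $v_1$ (through $\|Av\|=\sqrt{\|A\bar v_3e_3\|^2+2\bar v_3 v_1A_{13}+v_1^2A_{11}}$) and tends to $0$ as $v_1\downarrow0$; no $v_1$-free polynomial inequality can be equivalent to it, and the quantifier ``for all $v_1\in(0,\bar v_3)$'' must be handled by an argument in $v_1$, with the binding case being the limit $v_1\downarrow 0$. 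The missing idea --- and the paper's actual argument --- is that $G$ is strictly convex in $v_1$, so it suffices to show $\partial_+G(0)\ge0$; since $A_{13}+A_{31}\le0$ and $(A\theta)_1\le0$ on $R_f$, that derivative condition can be squared, and only then does the algebra collapse to the polynomial inequality of Condition 1. Your proposal uses convexity only in the decision variables $(y,T)$ to obtain the closed forms, not convexity of the value in $v_1$, so as written the plan for (a) does not go through; adding the convexity-in-$v_1$ and boundary-derivative step repairs it and essentially reproduces the paper's proof.
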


\begin{lemma} \label{DFO}
Given an RSVP with $\Gamma=I$, $\theta_0 < 0$, and $r_1,r_2 \ge 0$, define points $v=(v_{1},0,\bar{v}_{3})$
and $v'=(0,v_{2},v_{3})$ with $\bar{v}_3 >0$ and $v_i > 0$ for $i=1,2,3$. Then if Condition 1 holds, the least cost
two-piece path from the origin to $v$ and from $v$ to $v'$ is not an optimal
path to $v'$.
\end{lemma}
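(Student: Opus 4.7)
The plan is to exhibit a concrete alternative path to $v'$ whose cost strictly undercuts the best two-piece path that pivots at $v$. The alternative pivots instead at a point $k^{*}\bar{v}_{3}e_{3}$ on the $x_{3}$-axis, and the comparison is driven entirely by Lemma \ref{lem:case3}: part (b) picks a critical $k^{*}$ which makes the second leg of the alternative (reflected along $F_{1}$) no more expensive than the direct segment $v\to v'$, while part (a) then forces the first leg (reflected along $F_{2}$) to strictly undercut the corresponding first leg of the original path.

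First I would identify the least cost two-piece path that pivots at $v$. Since $v$ lies in the interior of $F_{2}$ while $v'$ lies in $F_{1}$ but outside $F_{2}\cup F_{3}$, the only feasible realization of the segment $v\to v'$ is a direct interior segment of cost $\tilde{\mathcal{I}}_{0}(v,v')$. The first piece from the origin to $v$ could be either direct or reflected on $F_{2}$; but the origin and $v$ both have $x_{2}=0$, so a direct segment between them already lies in $F_{2}$ and is a zero-reflection special case of a one-piece reflected path in $F_{2}$. Hence $\tilde{\mathcal{I}}_{2}(v)\le\tilde{\mathcal{I}}_{0}(v)$, and the least cost two-piece path through $v$ has cost exactly $\tilde{\mathcal{I}}_{2}(v)+\tilde{\mathcal{I}}_{0}(v,v')$.

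Next I invoke Lemma \ref{lem:case3}. Choose $k^{*}\in[0,1]$ as in part (b), so that $\tilde{\mathcal{I}}_{0}(v,v')\ge\tilde{\mathcal{I}}_{1}(k^{*}\bar{v}_{3}e_{3},v')$. Part (a), which is where Condition 1 actually enters, then yields $\tilde{\mathcal{I}}_{2}(v)>\tilde{\mathcal{I}}_{2}(k^{*}\bar{v}_{3}e_{3})$ for this same $k^{*}$. Adding the two inequalities produces
$$\tilde{\mathcal{I}}_{2}(v)+\tilde{\mathcal{I}}_{0}(v,v') \;>\; \tilde{\mathcal{I}}_{2}(k^{*}\bar{v}_{3}e_{3})+\tilde{\mathcal{I}}_{1}(k^{*}\bar{v}_{3}e_{3},v').$$
The right-hand side is the cost of a legitimate two-piece path from the origin to $v'$: the origin and $k^{*}\bar{v}_{3}e_{3}$ both lie in $F_{2}$, and $k^{*}\bar{v}_{3}e_{3}$ and $v'$ both lie in $F_{1}$, so each leg is a feasible one-piece reflected path on the appropriate face (and if $k^{*}=0$ happens to occur, the first leg collapses and the alternative is simply a one-piece direct path from the origin to $v'$, which is still feasible). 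Hence the original two-piece path through $v$ is strictly more expensive than this feasible competitor to $v'$, so it cannot be optimal.

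The main obstacle is concentrated in Lemma \ref{lem:case3}, where Condition 1 is genuinely needed; at the level of the present lemma the work is assembly, but one step that must not be glossed over is scoring the origin-to-$v$ piece as a reflected path in $F_{2}$ rather than as a direct path, since otherwise part (a) would not apply. A secondary concern is that Lemma \ref{lem:case3} as stated assumes $v_{2}<v_{3}$ and $v_{1}<\bar{v}_{3}$, so if these inequalities are not already implicit in the setup of the present lemma the complementary cases would need to be handled separately --- most naturally by rotational symmetry via Lemma \ref{lem:opp}(4), by the Switchback Lemma \ref{lem:switchback}, or by Lemma \ref{axis_eliminate}.
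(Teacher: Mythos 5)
Your central assembly---taking $k^{*}$ from Lemma \ref{lem:case3}(b) so that $\tilde{\mathcal{I}}_{0}(v,v')\ge\tilde{\mathcal{I}}_{1}(k^{*}\bar{v}_{3}e_{3},v')$, adding the strict inequality $\tilde{\mathcal{I}}_{2}(v)>\tilde{\mathcal{I}}_{2}(k^{*}\bar{v}_{3}e_{3})$ from part (a), and reading the right-hand side as the cost of a feasible two-piece competitor pivoting on the $x_3$-axis---is exactly the paper's argument, but the paper deploys it only as one of three cases, precisely because Lemma \ref{lem:case3} carries hypotheses your write-up does not secure: it requires $v_{2}<v_{3}$, $v_{1}<\bar{v}_{3}$, and $r_{1}>r_{2}\ge 0$. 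The statement you are proving imposes no ordering on $(v_2,v_3)$ or on $(v_1,\bar v_3)$, so the complementary regimes are genuinely part of the lemma, and your closing deferral (``handle them by rotational symmetry, the Switchback Lemma, or Lemma \ref{axis_eliminate}'') is not a proof: neither the Switchback Lemma nor Lemma \ref{axis_eliminate} addresses these configurations, and rotational symmetry alone is only half of what is needed.

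Concretely, the paper treats $v_{2}\ge v_{3}$ by the Bad Faces Theorem \ref{theorem:shortest} (with terminal point $v'\in F_1$, a last segment emanating from $v\in F_2$ would then come from the farthest face), and treats $v_{1}\ge\bar v_{3}$ (after scaling $\bar v_3=1$) by pivoting at the rotated point $\tilde v=(0,1,v_1)$, where $\tilde{\mathcal{I}}_{2}(v)=\tilde{\mathcal{I}}_{1}(\tilde v)$ by symmetry \emph{and} the direct legs are compared explicitly via $\|v'-v\|>\|v'-\tilde v\|$; the latter distance estimate is the part your sketch omits. In addition, if Condition 1 is read as just the displayed inequality (as the paper's own case split indicates, since it also covers $r_2\ge r_1$), then the regime $r_{2}\ge r_{1}$ with $v_1<\bar v_3$ likewise falls outside Lemma \ref{lem:case3}; the paper handles it through Lemma \ref{differentR} ($\tilde{\mathcal{I}}_{2}(v)\ge\tilde{\mathcal{I}}_{1}(\hat v)$ with $\hat v=(0,v_1,1)$) plus the same kind of direct-segment comparison, and your proposal has no substitute for this case. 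Within the regime where Lemma \ref{lem:case3} does apply, your argument is correct and matches the paper's, including the observation that the competitor is feasible even when $k^{*}$ degenerates.
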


\begin{proof} When $v_{2} \geq v_{3}$, the two-piece path through $v$ is not optimal due to Theorem \ref{theorem:shortest}. Next, invoking the scaling property of Lemma \ref{lem:opp}, we assume without loss of generality that
$\bar{v}_3=1$. 

\textbf{Case 1.} Consider then the case where $v_2<v_3$, $v_1 < 1$, $r_{2} \geq r_{1}$ and set $\hat{v}=(0,v_{1},1)$.
We claim that the optimal two-piece path through $\hat{v}$ is strictly better than
the optimal two-piece path through $v$. First, Lemma \ref{differentR} gives $\tilde{\mathcal{I}}_2(v) \geq \tilde{\mathcal{I}}_1(\hat{v})$. In other words, the first segment of the path through $\hat{v}$ has
a lower (or equal) cost than the first segment through $v'$.

To compare the second segments, note that
\begin{eqnarray*}
\tilde{\mathcal{I}}_{0}(v,v') & = & \| \theta \| \|v'-v \| - \langle \theta, v'-v \rangle \qquad \mbox{and} \\ \tilde{\mathcal{I}}_{0}(\hat{v},v') & = & \| \theta \| \|v'-\hat{v}\| - \langle \theta, v'-\hat{v}\rangle.
\end{eqnarray*}
Furthermore, we have $\langle \theta, v'-v \rangle = \langle \theta, v'-\hat{v}\rangle,$ and  $$ \|v'-v \| = \sqrt{v_{1}^{2} +v_{2}^{2}}> \sqrt{(v_{2}-v_{1})^{2}} = \|v'-\hat{v} \|.$$
Thus, $\tilde{\mathcal{I}}_{0}(v,v') > \tilde{\mathcal{I}}_{0}(\hat{v},v')$ and the result is established
for this case.

\textbf{Case 2.} Suppose next that $v_2<v_3$ and $v_1 \ge 1$ (with no restriction on $r_1$ and $r_2$).
Let $\tilde{v} = (0,1,v_{1})$ and consider the two-piece path to $v'$ via $\tilde{v}$. Again,
we show that the optimal two-piece path through $\tilde{v}$ is strictly better than
the optimal two-piece path through $v$. By rotational symmetry $\tilde{\mathcal{I}}_2(v) = \tilde{\mathcal{I}}_1(\tilde{v})$. On the other hand $\langle \theta, v'-v \rangle = \langle \theta, v'-\tilde{v} \rangle$, and $$ \|v'-v \| = \sqrt{v_{1}^{2} +v_{2}^{2}}> \sqrt{(v_{2}-1)^{2}+(1-v_{1})^{2}} = \|v'-\tilde{v} \|.$$ As in Case 1, this implies $$\tilde{\mathcal{I}}_{0}(v,v') > \tilde{\mathcal{I}}_{0}(\tilde{v},v'),$$
which establishes the result for this case.

\textbf{Case 3.} The remaining case is when $v_{2} < v_{3}$, $r_{2} < r_{1}$, and $v_1 < 1$.
Once again we find an alternate two-piece path to $v'$ which has a lower cost. In this
case, consider the two-piece path to $v'$ via $e_3$. If Condition
1 holds, then Lemma \ref{lem:case3} indicates that for some $k \in [0,1]$, 
$\tilde{\mathcal{I}}_2(v) > \tilde{\mathcal{I}}_2(ke_{3})$ and $ \tilde{\mathcal{I}}_{0}(v,v')
\geq \tilde{\mathcal{I}}_1(ke_{3},v')$.
This establishes the result for this case.
\end{proof}

Note that Condition 1 is only needed to establish the third case. It may be possible
to replace this condition by a simpler expression for special cases.

%\begin{remark} \label{DFO2}
%It can be seen that the two piece path from any point on the axis 3 to $v$ then to $v'$ cannot be %optimal. The starting point does not have to be the origin.
%\end{remark}

%\begin{lemma} If there exists one terminal point on any of the faces that has an spiral optimal %path then:
%\begin{enumerate}
%\item All the points on the faces will have spiral optimal paths.
%\item The optimal paths of all the inner terminal points can be separated into two parts: the first %is a spiral path that goes from origin to some point on the face; the second is an interior escape %path from that point to the terminal point.
%In this case we say that the problem has a spiral optimal solution.
%\end{enumerate}
%\end{lemma}

Next, we are now able to establish the result that there always exist
gradual optimal paths to points on the boundary of the octant. It is important
to note that the class of gradual paths do not include paths which traverse an
axis and then cross the interior to a point on a two-dimensional face.
\begin{theorem} \label{thm:gradual}
Consider an RSVP with $\Gamma = I$, $\theta_0 < 0$, $r_1,r_2 \ge 0$. Suppose Condition 1 holds
and that there exists an optimal path with a finite number of segments.
Then:
\begin{itemize}
\item[(i)] For any point on an axis there exists an optimal path consisting of
a single segment; and
\item[(ii)] For any point on a two-dimensional face there exists an optimal
gradual path, consisting of one or two segments.
\end{itemize}
\end{theorem}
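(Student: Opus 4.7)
My plan is to prove both parts by an induction on the number $n$ of segments of the given optimal path, using the location of the penultimate pivot $w$ (the starting point of the final linear segment) to drive the case analysis. Theorem \ref{theorem:shortest} together with Remarks 1 and 2 for boundary terminals dramatically restricts where $w$ can lie, and the detour-elimination lemmas \ref{lem:switchback}, \ref{axis_eliminate}, and \ref{DFO} rule out every remaining non-gradual configuration by exhibiting a strictly cheaper alternative, contradicting optimality.

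For part (i), rotational symmetry (Lemma \ref{lem:opp}, part 4) lets us take $v = v_3 e_3$. Remark 2 after Theorem \ref{theorem:shortest} confines $w$ to $F_1 \cup F_2$. If $w=0$ we are done. If $w$ lies on the $x_3$-axis, the final segment runs along that axis, and the inductive hypothesis applied to the (shorter) axis-terminating sub-path origin-to-$w$ combined with reflected convexity (Lemma \ref{refconvex}) collapses everything into one segment. If $w$ lies on $F_{1,3}$ or $F_{2,3}$, then $w$ and $v$ share a zero coordinate, so $\overline{wv}$ lies in a single 2D face $F_K$, and reflected convexity again collapses the two-segment path into one reflected segment on $F_K$ from the origin to $v$. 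Finally, if $w \in \operatorname{int}(F_1) \cup \operatorname{int}(F_2)$, the segment preceding $w$ must originate from a face not containing $w$, and the resulting tail two-segment configuration matches (after rotational relabelling) the suboptimal pattern of Lemma \ref{axis_eliminate}, contradicting optimality.

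For part (ii), take $v = (0, v_2, v_3) \in \operatorname{int}(F_1)$ with $v_3 \ge v_2$. Remark 1 after Theorem \ref{theorem:shortest} excludes $w \in \operatorname{int}(F_3) \cup F_{2,3}$. The cases $w=0$ and $w$ on $F_{1,2}$ or $F_{1,3}$ directly yield a gradual path of at most two segments after applying part (i) to origin-to-$w$. If $w \in \operatorname{int}(F_2)$, the merging property (Lemma \ref{lem:opp}, part 3) replaces the first $n-1$ segments by the optimal single-piece path to $w$, producing a two-piece path origin-to-$w$-to-$v$ of no greater cost; Lemma \ref{DFO}, under Condition 1, then provides a strictly cheaper alternative, a contradiction. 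If $w \in \operatorname{int}(F_1)$, the last segment is reflected along $F_1$; applying the inductive hypothesis to the sub-path origin-to-$w$ (whose terminal lies on the 2D face $F_1$) combined with reflected convexity within $F_1$ merges the trailing in-face pieces into a single reflected segment, giving a final gradual path with at most two segments.

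The main technical obstacle will arise in the last subcase of part (i), when the penultimate segment enters $w \in \operatorname{int}(F_1)$ from $\operatorname{int}(F_3)$ rather than from $\operatorname{int}(F_2)$: this configuration is not directly eliminated by Lemma \ref{axis_eliminate}. The plan for handling it is to iterate the Bad Faces argument on the sub-path terminating at $w$, invoking Remark 1 at the 2D-face point $w$ (noting that $F_3$ is among the farthest faces from $w$ in the relevant coordinate regime) to push the offending segment off $\operatorname{int}(F_3)$; the merging property then converts this into a contradiction with optimality. The scaling property (Lemma \ref{lem:opp}, part 2) ensures that this iterated reduction terminates within at most $n$ rounds, so that the induction closes cleanly at one or two segments.
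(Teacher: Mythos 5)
Your terminal-side induction on the penultimate pivot $w$ is a genuinely different strategy from the paper's (which classifies the whole path as a symbolic sequence of pivot types read from the terminal to the origin and argues from the \emph{origin} end), but as written it has two real gaps. First, in part (i) you dispose of $w\in \mathrm{int}(F_1)\cup \mathrm{int}(F_2)$ by claiming the tail pattern ``different face $\to w \to$ axis'' matches Lemma \ref{axis_eliminate} ``after rotational relabelling.'' Only cyclic permutations preserve RSVP data, and the cyclic images of the pattern $F_1\to F_2\to e_3$ are $F_2\to F_3\to e_1$ and $F_3\to F_1\to e_2$; hence for a terminal on the axis $F_{1,2}$ the lemma covers only the orientation $\mathrm{int}(F_1)\to \mathrm{int}(F_2)\to v$. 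The mirrored orientations $\mathrm{int}(F_2)\to \mathrm{int}(F_1)\to v$, $\mathrm{int}(F_3)\to \mathrm{int}(F_2)\to v$ and $\mathrm{int}(F_3)\to \mathrm{int}(F_1)\to v$ are not rotational variants (mirror symmetry is not assumed here), and your proposed repair for the one case you flag --- pushing the offending segment off $\mathrm{int}(F_3)$ via Remark 1 at $w$ --- only works when $F_3$ happens to be the farthest face from $w$; if $w=(0,w_2,w_3)$ with $w_2>w_3$ the Bad Faces Theorem permits the preceding pivot to lie in $\mathrm{int}(F_3)$, so the ``relevant coordinate regime'' is not something you control. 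The paper never needs to kill these local tail patterns: it shows the symbolic series must \emph{end} with $(A,O)$ (convexity, Lemma \ref{DFO}, Theorem \ref{theorem:shortest}), so the first forward segment runs along an axis, and then scaling plus rotational symmetry (Lemma \ref{lem:opp}) transfers single-segment optimality of that first piece to every axis point.

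Second, in part (ii) your treatment of $w\in \mathrm{int}(F_2)$ assumes you may replace the first $n-1$ segments by ``the optimal single-piece path to $w$'' and then invoke Lemma \ref{DFO}. The inductive hypothesis only gives a gradual optimal path to $w$ with one \emph{or two} pieces; when it has two pieces (axis, then face), the resulting path to $v$ has the form origin $\to$ axis $\to F_2 \to F_1$, to which Lemma \ref{DFO} does not apply, and no contradiction is available --- indeed none should be, since such a path can have the optimal cost. This is exactly the $(F,DF,A,O)$ configuration in the paper, which is handled not by exhibiting a cheaper path but by a scaling/symmetry argument: optimality of that configuration forces the optimal path to points of $w$'s type to be of the form $(F,A,O)$, and rotational symmetry then yields a gradual $(F,A,O)$ path to $v$ itself (using the coordinate ordering supplied by the Bad Faces Theorem). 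That idea, together with the origin-side argument above, is the essential content of the paper's proof and is missing from your proposal; without it the induction does not close in either part.
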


\begin{proof}
To prove the result we need to eliminate a large number of path types. In order to
categorize these types, note that each type can be classified according to the endpoints of the
linear segments. The endpoint of each piece can be on the interior of a two-dimensional face ($F$),
on the interior of an axis ($A$), or the origin ($O$). In all the arguments below, we consider a path
with a finite number of pieces, and thus a finite number of endpoints, which starts at a point
$v$ in the octant and terminates at the origin. Specifically, we label the endpoints in ``reverse
order.'' Note that an endpoint cannot be in the interior
of the octant due to the convexity property in Lemma \ref{lem:opp}.
Furthermore, note that the last point is always of type $O$ and of course, this is the only
position at which this type occurs.

Next, consider an endpoint of type $F$. There are three possibilities for the
previous endpoint:
\begin{itemize}
\item The endpoint is on an axis $A$ (either one of the two axes adjoining this face, or
the remaining axis)
\item The endpoint is on the same face $SF$.
\item The point is on a difference face $DF$.
\end{itemize}
Similarly, for an endpoint of type $A$, there are two possibilities for the
previous endpoint:
\begin{itemize}
\item The point is on an axis $A$.
\item The point is on the same face $SF$ (i.e., a face adjoining the axis).
\end{itemize}
Notice that for a point of type $A$ the previous point cannot be on the face not adjoining the axis
as a consequence of the Bad Faces Theorem.
With this notation, we can categorize a piecewise linear path by a finite series whose
elements are in the set $\{SF, DF, A, O \}$.

For a series corresponding to a finite-piece optimal path, we infer the following rules:
\begin{enumerate}
\item By the convexity property of
optimal paths, none of the following pairs can appear in the series: $(DF, DF)$, $(SF,SF)$, $(SF, A)$.
\item If $A$ appears somewhere in the series, then the end of the series cannot be $(A,O)$, due to the scaling
and symmetry properties of optimal paths. The only exception is, of course a series which is simply $(A,O)$.
\item The series cannot end with $(SF, O)$ by convexity.
\item The series cannot end with $(DF, O)$ due to Lemma \ref{DFO}.
\end{enumerate}
Note that rules 3 and 4 imply that the series must end with $(A,O)$.

We now establish
part (i). Consider a path with the terminal point on say axis $F_{1,2}$
and the first segment of the path emanating from the origin. If this first
segment traverses an axis, then by scaling and symmetry, part (i) immediately
holds for any terminal point on an axis. By convexity, the first segment cannot
be in the interior of the octant. So, the first segment must be embedded in
a two-dimensional face. Now, the second segment cannot be embedded
in this same face due to convexity. So, it must cross the interior and
terminate either in a different face, or on the opposing axis. The first
case is ruled out by Lemma \ref{DFO}. The second case is not possible
by the Bad Faces theorem. Hence, the first, and only segment, must be embedded
in an axis.
% Previously, rule 4.

In consideration of Rules 1 through 4 above, to prove part (ii) we must exclude two
remaining cases:
 $(F, DF, A, O)$ and $(F, DSF_{i},DF, A, O)$, $i=0,1,2, \ldots.$  Here $SDF_{i}$ is a subsequence of $(SF,DF)$ that repeats $i$ times, and $DSF_{i}$ is a subsequence of $(DF,SF)$ that repeats $i$ times. Consider the case $(F, DF, A, O)$ first.
Without loss of generality let the terminal point be on face $F_1$, denote it
 $v^{1}=(0,v^{1}_{2}, v^{1}_{3})$, and assume that $0 < v^{1}_{2} \leq v^{1}_{3}$.
The endpoint before $v^{1}$ has to be a point $v^{2}$ on $F_2$. So $v^{2}=(v^{2}_{1},0, v^{2}_{3})$ which is the $DF$ in the series. We must have $v^{2}_{1} \leq v^{2}_{3}$
for the path to be optimal, by the assumed type of path and the Bad Faces Theorem.
The next endpoint
$v^3$ cannot be on axis $F_{1,3}$ again by the Bad Faces Theorem. Furthermore, it cannot be on axis $F_{1,2}$ due to the arguments in the proof of Lemma \ref{DFO}. Hence,
$v^3$ must be in $F_{2,3}$.

Now, if the path just described is optimal, this implies that the optimal path to $v^{2}$ is from the origin to $v^{3}$ then to $v^{2}$.
Then by the scaling property, the optimal path to an arbitrary point $(u_{1}, 0, u_{3})$
on face $F_2$ is of the form $(F,A,O)$ if $u_{1} \leq u_{3}$.
By symmetry, the optimal path to an arbitrary point $(0, u_{2}, u_{3})$ on $F_1$ is also of the form $(F,A,O)$ if $u_{2} \leq u_{3}$. However, $v^{1}$ is indeed of this form, which
means we can replace the proposed optimal path of the form $(F,DF, A, O)$
by a gradual path of the form $(F,A,O)$.

Next consider the case $(F, SF, DF, A, O)$. If the terminal point $v$ is in say $F_1$ then
so is the endpoint $v^1$ immediately preceding this point. This implies that the optimal
path to $v^1$ is of the form $(F, DF,A,O)$. As argued above, we can eliminate this
form. All the remaining cases can be eliminated by analogous arguments that reduce the
end of the series to the $(F,DF,A,O)$ case. We conclude that any optimal path to
a point on the interior of a two-dimensional face can be reduced to the gradual
path forms $(F,A,O)$ or $(F,O)$. This establishes part (ii).
\end{proof}

Finally, we present the main result for optimal paths with a finite number of segments.

\begin{theorem} \label{thm:supergradual}
Suppose the conditions in Theorem \ref{thm:gradual} hold for an RSVP.
For any point in $\reals_+^3$, if there exists an optimal path with a finite number of pieces
then there exists a gradual optimal path.
\end{theorem}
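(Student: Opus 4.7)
The plan is to reduce the statement to Theorem \ref{thm:gradual}. If $v$ lies on the boundary of the octant, Theorem \ref{thm:gradual} produces a gradual optimal path directly, so assume $v \in \mathrm{int}(\reals_+^3)$. Let $(x,y,z)$ be a finite-piece optimal triple ending at $v$. By the convexity property (Lemma \ref{lem:opp}(1)), no intermediate pivot of a piecewise linear optimal path can lie in the strict interior of the octant; consequently the last endpoint $w$ of $z$ preceding $v$ lies on the boundary of $\reals_+^3$, and the final piece of $z$ is the straight segment from $w$ to $v$.

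Next, invoke the Bad Faces Theorem (Theorem \ref{theorem:shortest}(a)) to replace the optimal path, if necessary, by another optimal path whose $w$ sits in one of the two faces nearest to $v$. Thus $w$ falls into one of three cases: (a) $w$ is the origin, in which case the whole path is the single segment from $O$ to $v$, trivially gradual (dimensions $0 \to 3$); (b) $w$ lies on an axis bounding one of the two nearest faces; or (c) $w$ lies in the relative interior of one of those $2$-dimensional faces.

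In cases (b) and (c), decompose $(x,y,z)$ at $w$: the sub-triple from the origin to $w$ must itself be optimal for the sub-problem ending at $w$, for otherwise the merging property (Lemma \ref{lem:opp}(3)) would let us splice a strictly cheaper sub-triple with the final piece to produce a strictly cheaper triple to $v$. Since this sub-triple still has finitely many pieces, Theorem \ref{thm:gradual} applies and yields a gradual optimal sub-triple $\tilde z$ to $w$: a single segment along the axis in case (b), or one or two segments through faces of strictly increasing dimension in case (c). Merge $\tilde z$ with the final segment from $w$ to $v$ via Lemma \ref{lem:opp}(3); the concatenation is a feasible triple ending at $v$ whose cost equals $I(v)$ and is therefore optimal. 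Its endpoints traverse faces of strictly increasing dimension ending at $\dim(w) \in \{1,2\}$ and then $v$ (dimension $3$), so the concatenation is gradual, consisting of at most three linear pieces.

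The main potential obstacle lies in two subsidiary claims: that the sub-problem to $w$ inherits a finite-piece optimal triple (handled by the Bellman-type additivity argument above) and that the case enumeration produced by the Bad Faces Theorem together with Theorem \ref{thm:gradual}'s axis/face classification is exhaustive. Both are straightforward once the setup is in place, so the substantive work has already been done in Theorems \ref{theorem:shortest} and \ref{thm:gradual}, and Theorem \ref{thm:supergradual} is really the geometric glue that lifts them from the boundary to the interior.
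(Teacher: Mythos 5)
Your proof is correct and follows essentially the same route as the paper: boundary terminal points are handled by Theorem \ref{thm:gradual}, and for interior points the last segment's boundary endpoint $w$ is isolated by convexity, the (optimal) sub-path to $w$ is replaced by a gradual one, and the pieces are merged via Lemma \ref{lem:opp}. Your extra invocation of the Bad Faces Theorem is superfluous (the trichotomy origin/axis/face already covers every boundary $w$), but it does no harm, and your explicit Bellman-type argument for optimality of the sub-path to $w$ simply spells out what the paper leaves implicit.
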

\begin{proof}Theorem \ref{thm:gradual} establishes the result for points on
the boundary of the octant. By convexity, the last segment of an optimal
path to an interior point must have an endpoint on the boundary. The result
then follows directly from Theorem \ref{thm:gradual}.
\end{proof}

\section{Results for Exotic Paths}  \label{sec:exotic}

This entire section is devoted to arguing that certain ``exotic spirals'' cannot be optimal. Depicted in the left-hand side of Figure \ref{exotic_pic} is
what we call a classic spiral, a path type which has appeared in other contexts in the
literature on fluid models. In Section \ref{sec:optspiral} we show that such a path can
indeed be the optimal solution to the variational problem we consider in the paper.
For now, however, we wish to show that other types of paths, exotic spirals, cannot be
better than a classic spiral. One important type of exotic spiral appears in
the right-hand side of Figure \ref{exotic_pic}. Eliminating this type of path from
consideration is the focus of much of the next results.

\begin{figure}
\begin{centering}
 \includegraphics[height=7cm]{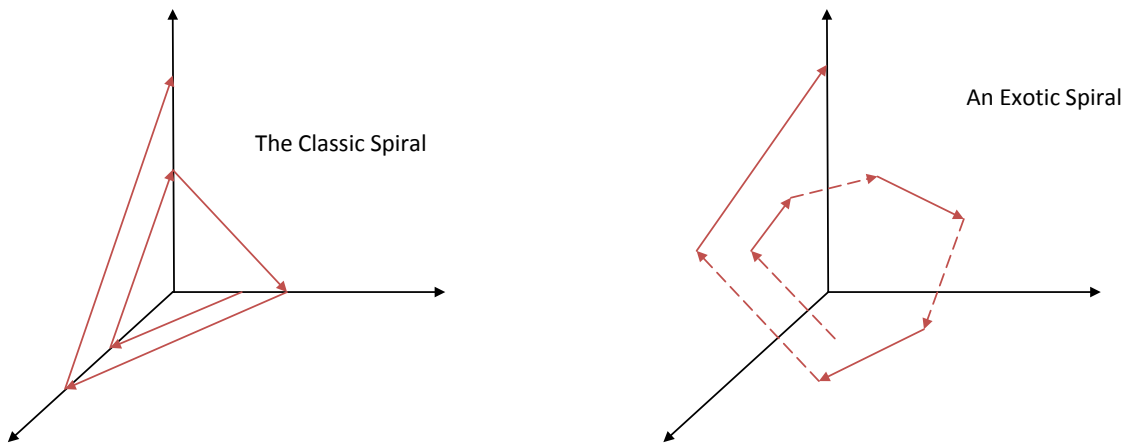}
\caption{Spiral paths} \label{exotic_pic}
\end{centering}
\end{figure}

\begin{lemma} \label{parallel}
For any two-dimensional face, define the \emph{bisecting ray} to be the ray which forms an
angle of $\pi/4$ radians with the adjacent axes and whose endpoint is the origin.
Consider an optimal path which the following characteristics: it contains a line segment that intersects the bisecting ray in $F_1$
and it contains another line segment that intersects the bisecting ray in $F_2$. Then
there exists an optimal path with the following characteristics:
\begin{itemize}
\item[(a)] The path has two segments (as defined above) which form the same angle with the bisecting rays (i.e., if the segments are rotated to lie
in the same face, then they must be parallel).
\item[(b)] The path contains another segment in $F_3$ which intersects the bisecting
ray at the same angle.
\item[(c)] The path contains an infinite number of segments.
\end{itemize}
\end{lemma}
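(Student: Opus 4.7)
The plan is to extract an archetypal sub-arc from the given optimal path $\gamma$, then tile infinitely many scaled and rotated copies of it together using the scaling, symmetric-terminal-points, and merging properties of Lemma~\ref{lem:opp}. Let $p_1\in F_1$ and $p_2\in F_2$ be the points where the two hypothesized segments meet the respective bisecting rays, ordered so that $p_1$ precedes $p_2$ along $\gamma$. Since $\gamma$ is globally optimal, its sub-arc $\gamma_{12}$ from $p_1$ to $p_2$ is itself optimal between its endpoints, with some cost $C$. Let $\tau$ denote the cyclic coordinate rotation $(x_1,x_2,x_3)\mapsto(x_3,x_1,x_2)$, which sends the bisecting ray of $F_i$ to that of $F_{i+1}$ (indices cyclic mod $3$); set $\lambda=\|p_2\|/\|p_1\|$.

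I would build the next arc of the spiral by composing rotation and scaling. Lemma~\ref{lem:opp} part 4 gives that $\tau(\gamma_{12})$ is optimal from $\tau(p_1)$ (on the bisecting ray of $F_2$) to $\tau(p_2)$ (on the bisecting ray of $F_3$) with cost $C$; by part 2, the rescaled path $\lambda\,\tau(\gamma_{12})$ is optimal from $p_2=\lambda\tau(p_1)$ to $p_3:=\lambda\tau(p_2)$ with cost $\lambda C$. Its two distinguished segments are scaled-rotated copies of those in $\gamma_{12}$, so they form identical angles with the respective bisecting rays in their faces, delivering (a) and (b) once part 3 glues $\gamma_{12}$ and $\lambda\tau(\gamma_{12})$ into one feasible path. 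Iterating the construction with $\tau^n$ and $\lambda^n$ for every positive and negative integer $n$ generates bisecting-ray crossings $p_n$ with $\|p_n\|=\lambda^{n-1}\|p_1\|$. When $\lambda>1$ the backward geometric series $C\sum_{k\ge 1}\lambda^{-k}$ converges, anchoring the spiral at the origin with finite total cost and forcing infinitely many linear pieces in every neighborhood of the origin, which establishes (c).

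The main obstacle is to verify that the concatenated infinite spiral is genuinely \emph{optimal} rather than merely feasible, and to reconnect it to the original terminal of $\gamma$. For optimality at the spiral's own endpoint, one must rule out a cheaper competitor from the origin; the natural strategy is to cut any competitor at its bisecting-ray crossings and bound each piece below by the cost of the corresponding rotated and scaled copy of $\gamma_{12}$, exploiting again the symmetry of the cost under $\tau$ and scaling. To recover the original terminal $v$, one retains the tail of $\gamma$ beyond $p_2$ and merges it onto the spiral after sufficiently many iterations, preserving the total cost $I(v)$ by sub-path optimality together with Lemma~\ref{lem:opp} part 3. A secondary obstacle is the degenerate case $\lambda\le 1$, where the backward spiral fails to approach the origin; this is handled either by reversing the orientation of the sub-arc (so that the new ratio exceeds $1$) or by arguing that $\lambda\le 1$ is incompatible with the assumption that $\gamma$ itself starts at the origin and is globally optimal.
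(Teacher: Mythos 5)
There is a genuine gap: part (a), the heart of the lemma, is never actually proved. Your construction tiles rotated and scaled copies of the sub-arc $\gamma_{12}$ and then asserts that the distinguished segments of the tiled path ``form identical angles, delivering (a) and (b)'' simply because they are copies of one another. But at every junction $p_n$ of your tiling the incoming direction is (a rotated image of) the direction of the original $F_2$-crossing segment while the outgoing direction is (a rotated image of) the direction of the original $F_1$-crossing segment; if those two angles differ --- which is precisely the possibility (a) must exclude --- your spiral has a kink inside a face at every crossing, its ``segments'' only touch the bisecting rays at their endpoints rather than crossing them, and by reflected convexity (Lemma \ref{refconvex}) such a kinked path can be improved, so it cannot serve as the optimal path the lemma asks for. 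The paper obtains (a) exactly from this kink obstruction, run as a contradiction: replace the initial portion of the \emph{original} optimal path, from the origin up to the $F_2$-crossing point $w$, by the rotated and scaled copy of its portion from the origin up to the $F_1$-crossing point; by the scaling and symmetry properties of Lemma \ref{lem:opp} this new prefix costs exactly $I(w)$, so the spliced path is still optimal, and if the two angles differed it would have a kink at $w$, contradicting Lemma \ref{refconvex}. Your proposal never invokes any convexity argument, so the equal-angle property is assumed rather than established.

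The second problem is the optimality of your infinite spiral. Your suggested repair --- cut an arbitrary competitor at its bisecting-ray crossings and bound each piece below by a rotated, scaled copy of $\gamma_{12}$ --- fails at the first step: a competitor from the origin need not intersect the bisecting rays at all (a direct linear path or a gradual path along an axis and across a face, for instance), and even when it does there is no lemma comparing its pieces to copies of $\gamma_{12}$. The paper sidesteps this entirely: every replacement it performs is a prefix anchored at the origin whose cost equals that of the prefix it replaces, so the modified path still reaches the original terminal point with total cost $I(v)$ and is optimal with no competitor analysis; iterating the replacement produces the $F_3$ crossing for (b) and infinitely many crossings for (c). For the same reason, your case analysis on $\lambda\le 1$ (reversing the orientation of a path, or asserting incompatibility with optimality) is both unsubstantiated and unnecessary in the paper's scheme.
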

\begin{proof}
Let $\overline{v^{0}v^{1}} \in F_{1}$ and $\overline{v^{2}v^{3}} \in F_{2}$ be the segments which
intersect the respective bisecting rays and suppose the points are traversed by the optimal path in the
order $v^0$, $v^1$, $v^2$, and $v^3$.

We prove part (a) by contradiction, assuming the segments do not form the same angles with
the bisecting rays.
Consider then the portion of the path from the origin to $v^1$.
This portion can be rotated and scaled to create an optimal path which passes through the
point, call it $w$, where $\overline{v^{2}v^{3}}$ intersects the bisecting ray in $F_2$.
Thus, we can create a path from the origin through $w$ to $v_3$ which is optimal yet
has a ``kink'' at $w$. However, this path cannot be optimal due to reflected convexity
(Lemma \ref{refconvex}). This establishes (a).

It is clear by rotation, scaling, and merging that one can form an optimal path to $v_3$
which intersects the bisecting ray in $F_3$. Repeating the process results in the formation
of an optimal path with an infinite number of such segments. This establishes (b) and (c).
\end{proof}

We present one more lemma before giving the main result which eliminates
exotic spirals from consideration. 

\begin{lemma}
\label{thm:newspiral}
Consider an RSVP with $\theta_0 < 0$. Define $v = (v_1, 0, v_3)$ and $u = (u_1, u_2, 0)$, where $v_1, v_3, u_1, u_2 > 0$. Set $v' = (kv_3, kv_1, 0)$, and $u' = (0, ku_1, ku_2).$ Then, for all $k \in (0,1)$  the three-piece path passing through $u'$, $v'$,
$u$, and $v$, with cost $\tilde{\mathcal{I}}_{0}(u',v') +
\tilde{\mathcal{I}}_{3}(v',u) + \tilde{\mathcal{I}}_{0}(u,v)$ is suboptimal. 
\end{lemma}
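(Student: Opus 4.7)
The plan is to construct a strictly cheaper path from $u'$ to $v$ than the given three-piece path, by first using the rotational symmetry to reveal the path's spiral-fragment structure and then applying a switchback-style modification of the intermediate vertices in $F_3$.

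First, I would exploit the cyclic rotation $\sigma(x_1,x_2,x_3) = (x_3,x_1,x_2)$ under which the RS data is invariant. Since $u' = k\sigma(u)$ and $v' = k\sigma(v)$, the $\sigma$-invariance of the inner product $\langle\cdot,\cdot\rangle$ and of $\theta$, together with the linear scaling of direct costs, and the scaling and rotational-symmetry properties in Lemma \ref{lem:opp}(2) and (4), yield $\tilde{\mathcal{I}}_0(u',v') = k\,\tilde{\mathcal{I}}_0(u,v)$. The total three-piece cost therefore simplifies to $(1+k)\,\tilde{\mathcal{I}}_0(u,v) + \tilde{\mathcal{I}}_3(v',u)$, exhibiting the proposed trajectory as an outward-going half-loop of a classical spiral whose per-loop scaling factor is $k$.

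Second, I would identify competitor paths whose costs can be controlled in closed form. Two natural candidates are the two-piece paths $u' \to u \to v$ and $u' \to v' \to v$, each obtained by collapsing the $F_3$-reflected middle segment to one of its endpoints. Using the explicit direct-cost formula $\tilde{\mathcal{I}}_0(a,b) = \|\theta\|\,\|b-a\| - \langle\theta,b-a\rangle$, the cost differential with the three-piece path reduces to an algebraic expression. For the reflected segment itself, $\tilde{\mathcal{I}}_3(v',u)$ can be computed by optimizing over a constant increment $\dot y_3 \ge 0$ in the Skorohod representation, which effectively reduces that segment to a planar direct cost within $F_3$ under a drift that has absorbed part of the negative third coordinate of $\theta$. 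Once everything is expressed as costs on the two-dimensional face $F_3$ with appropriate effective endpoints and drift, a square-root inequality of the switchback type---$\sqrt{p+a^2} + \sqrt{q+b^2} > \sqrt{p+(a-b)^2} + \sqrt{q}$, exactly as used in the proof of Lemma \ref{lem:switchback}---should deliver strict suboptimality of the three-piece path relative to one of the two collapsed competitors.

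The main obstacle is the careful treatment of the reflected cost $\tilde{\mathcal{I}}_3(v',u)$. Because $\theta_0 < 0$, the optimal nonzero $\dot y_3$ strictly lowers $\tilde{\mathcal{I}}_3(v',u)$ below the ordinary direct cost $\tilde{\mathcal{I}}_0(v',u)$, so a naive triangle-inequality bound such as $\tilde{\mathcal{I}}_0(u',v') + \tilde{\mathcal{I}}_3(v',u) \ge \tilde{\mathcal{I}}_0(u',u)$ can fail. Handling this properly requires the explicit planar reduction above, followed by the switchback-style inequality in the reduced setting. The condition $k \in (0,1)$ is essential at this step: $k=1$ would make the proposed path spiral-consistent in the sense of Lemma \ref{parallel}, and $k=0$ would collapse the first segment to a point; strictly between these, the displacement $v'-u'$ is genuinely asymmetric relative to $v-u$, which is precisely what produces a strictly positive cost gap and hence suboptimality.
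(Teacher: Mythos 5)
Your proposal has a genuine gap at its central step. The competitors you propose --- the two-piece paths obtained by collapsing the reflected $F_3$ segment to one of its endpoints ($u'\to u\to v$ or $u'\to v'\to v$) --- are not shown to be cheaper, and in general they need not be. As you yourself note, $\tilde{\mathcal{I}}_3(v',u)$ can be strictly smaller than the direct cost $\tilde{\mathcal{I}}_0(v',u)$ because reflection absorbs part of the drift; precisely in the regimes where spiral-like paths are competitive (e.g.\ the example of Section \ref{sec:optspiral}, where reflected segments are what make the spiral beat more direct routes), deleting a cheap reflected segment and replacing it with extra direct travel can \emph{increase} total cost. Your appeal to a ``switchback-style'' square-root inequality does not repair this: Lemma \ref{lem:switchback} compares two paths whose reflected displacement is \emph{identical} (only the direct pieces are re-routed), whereas your modification changes the reflected portion itself, so the reduction ``to a planar direct cost with an effective drift'' followed by that inequality is asserted rather than proved, and the asserted conclusion is doubtful. (Also, your remark that $k\in(0,1)$ is what produces the cost gap is off the mark; the gap in the correct argument is independent of $k$.)

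The paper's proof keeps the reflected segment intact --- which is exactly how it sidesteps the difficulty you flagged. It translates both endpoints of the $F_3$ segment by $(-x,-x,0)$, setting $u_x=(u_1-x,u_2-x,0)$ and $v'_x=(kv_3-x,kv_1-x,0)$; since the displacement $u_x-v'_x=u-v'$ is unchanged and both points stay in $int(F_3)$ for small $x\ge 0$, the reflected cost satisfies $\tilde{\mathcal{I}}_3(v'_x,u_x)=\tilde{\mathcal{I}}_3(v',u)$ exactly. One then only has to differentiate the sum of the two direct costs, $f(x)=\tilde{\mathcal{I}}_0(u',v'_x)+\tilde{\mathcal{I}}_0(u_x,v)$ (the linear term $\langle\theta,\cdot\rangle$ is invariant under this simultaneous shift), and compute
\[
f'(0)=-\frac{\|\theta\|}{\|v-u\|}\,\sigma^{-2}(\gamma_0-\gamma_1)(u_2+v_3)<0,
\]
using $\gamma_0>\gamma_1$ from Lemma \ref{lemma:matrix}. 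Strict negativity of this one-sided derivative immediately gives a strictly cheaper nearby three-piece path and hence suboptimality. Your first observation, $\tilde{\mathcal{I}}_0(u',v')=k\,\tilde{\mathcal{I}}_0(u,v)$ by rotational symmetry and scaling, is correct but is not used in, and does not substitute for, this perturbation argument.
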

\begin{proof}

Define $u_x = (u_1 - x, u_2 - x, 0)$ and $v'_x = (kv_3 - x, kv_1 - x, 0)$ for some $x > 0$. Since $v_1, v_3, u_1, u_2 > 0$, when $x$ is small enough, $u_x, v'_x \in int(F_{3})$ for all $x$ in a non-negative neighborhood of 0. For all such $x$, $\tilde{\mathcal{I}}_{3}(v',u) = \tilde{\mathcal{I}}_{3}(v'_x,u_x).$ In the same non-negative neighborhood, define $$f(x) = \tilde{\mathcal{I}}_{0}(u', v'_x) + \tilde{\mathcal{I}}_{0}(u_x,v).$$ To establish the result, it is enough to show that $f'(0) < 0.$
\\
We have that $$f(x) =  \|\theta\| (\| v'_x -u'\|+ \|v-u_x \| ) - \langle \theta, v'_x-u'+v-u_x \rangle.$$
So straightforward calculations yield
\begin{equation*}
\begin{aligned}
&f'(x) = \sigma ^{-2} \|\theta\| \left[\frac{\gamma_0(2x+v_1-u_1-u_2)+\gamma_1(2x+2v_3+v_1-u_1-u_2)}{\|v-u_x\|} \right. \\
 + &  \left.   \frac{\gamma_0(2x+ku_1-kv_1-kv_3) + \gamma_1(2x+2ku_2+ku_1-kv_3-kv_1)}  {\|v_x - u'\|}\right].
\end{aligned}
\end{equation*}
Recalling that $\gamma_0 > \gamma_1$ by Lemma \ref{lemma:matrix}, we obtain
$$f'(0) = -\frac{\|\theta\|}{\|v-u_x\|}\sigma ^{-2}(\gamma_0 -\gamma_1)(u_2+v_3) < 0, $$
establishing the desired result.
\end{proof}

\begin{theorem}[Elmination of Exotic Spirals]  \label{thm:ex}
Consider an RSVP with $\Gamma = I$, $\theta_0 < 0$, $r_1,r_2 \ge 0$ and suppose Condition 1 holds.
For any optimal path to a point on the axis with a countably infinite number of segments, there exists
another path, with lower or equal cost, which is of the form of the classic spiral (i.e., of the form $(A,A,A, \ldots, O)$).
\end{theorem}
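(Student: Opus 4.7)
The plan is to show that any infinite-piece optimal path can be replaced by a classic spiral of at most the same cost, by applying the structural lemmas \ref{parallel} and \ref{thm:newspiral} together with the convexity, scaling, and rotational symmetry properties established in Lemma \ref{lem:opp}.

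Since by convexity each pivot of an optimal path lies on the boundary of the octant, and by Theorem \ref{thm:supergradual} no finite-piece optimal path to a boundary point is a spiral, I consider an infinite-piece optimal path and split into two cases based on where its pivots lie. If every pivot lies on a coordinate axis, the path is already a classic spiral of the form $(A, A, A, \ldots, O)$, and there is nothing to prove. Otherwise, some pivot lies in the interior of a 2D face, and the task is to derive a contradiction with optimality.

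In the latter case, I first argue that scaling (Lemma \ref{lem:opp} part 2) forces the pivots to contract geometrically toward the origin, while rotational symmetry (Lemma \ref{lem:opp} part 4) forces the infinite-piece path to visit all three 2D faces cyclically. Consequently, the path contains segments crossing the bisecting rays in at least two adjacent 2D faces, so Lemma \ref{parallel} produces an equivalent optimal path whose segments are mutual rotations of a single template — three segments, one per 2D face, recurring infinitely with a common scaling ratio $k \in (0,1)$. I then apply Lemma \ref{thm:newspiral} to any three consecutive segments of this self-similar structure: after a cyclic relabeling of the coordinate axes via rotational symmetry, these three segments match the template $u' \to v' \to u \to v$ of the lemma with $u' \in F_1$, $v' \in F_3$, $u \in F_3$, $v \in F_2$. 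The lemma then yields a strict cost decrease under the perturbation $u \mapsto u_x$, $v' \mapsto v'_x$ for small $x > 0$, which contradicts the assumed optimality of the original spiral.

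The main obstacle is coordinate bookkeeping: one must carefully verify that three consecutive segments of the self-similar structure given by Lemma \ref{parallel} really satisfy the endpoint pattern required by Lemma \ref{thm:newspiral} under the appropriate cyclic relabeling, and that the perturbation preserves a valid $R$-regulation of the Brownian path. The latter uses the assumption $r_1, r_2 \ge 0$ to guarantee that after the small shift of $u$ and $v'$ the reflected segment in $F_3$ remains admissible. A secondary technical issue is ensuring that improving a single three-piece block of the spiral does indeed produce a globally feasible path whose remaining (rescaled and rotated) tail is still valid; this follows because the perturbation modifies only finitely many pivots and leaves the geometric contraction of the tail untouched. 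Once these details are settled, the strict improvement rules out every non-classic-spiral optimal infinite path, proving the claim.
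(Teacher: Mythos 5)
Your overall target is too narrow, and a key class of paths is missing. The paper's proof does not simply split into ``all pivots on axes'' versus ``some pivot in the interior of a 2D face and then a strict contradiction.'' After invoking a self-similarity principle (scaling, rotation, merging: if an optimal characterization starting with $A$ contains another $A$ at position $n$, there is an optimal path whose whole characterization equals the tail from position $n$), together with the Bad Faces Theorem, the Switchback Lemma \ref{lem:switchback}, and Lemma \ref{axis_eliminate}, it shows that only two non-classic-spiral infinite types survive: (i) $(A,SDF,O)$, the genuine exotic spiral, and (ii) $(A,SDF_i,A,SDF_i,\ldots,O)$, paths that return to an axis after finitely many face excursions. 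Your argument, which asserts that ``rotational symmetry forces the path to visit all three 2D faces cyclically'' and then funnels everything into the template of Lemma \ref{thm:newspiral}, only addresses type (i); the cyclic-visitation claim is not a consequence of symmetry of the data and is exactly what the paper has to earn through the face-by-face bookkeeping you skip (Bad Faces to locate each successive pivot, Lemma \ref{axis_eliminate} to rule out certain exits toward an axis, the Switchback Lemma to forbid bouncing between two faces, and only then Lemma \ref{parallel} to get rotational parallelism). For type (i) your use of Lemma \ref{thm:newspiral} does match the paper's endgame, but without those intermediate eliminations you have not shown the path actually has the $u'\to v'\to u\to v$ structure the lemma requires.

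The deeper problem is type (ii): such a path has pivots in the interiors of 2D faces, yet it need not admit any strict improvement, so your plan of ``derive a contradiction with optimality'' cannot work there --- the theorem only asserts existence of a classic spiral of \emph{lower or equal} cost, and the paper handles (ii) not by contradiction but by a patching/induction argument: using Lemma \ref{parallel} to find an axis point $u$ whose optimal path shares the segment $\overline{v^3v^2}$, splicing paths to reduce $(A,SDF_i,A,\ldots)$ to $(A,SDF_{i-1},A,\ldots)$, and iterating down to $(A,A,A,\ldots,O)$, together with a separate argument that an optimal path can have only finitely many segments between consecutive axis visits (otherwise infinitely many crossings to $F_3$ near the limit point force infinite cost via the Switchback Lemma). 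None of this appears in your proposal, so as written it does not prove the theorem.
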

\begin{proof}We begin with a general principle that holds for paths with an infinite number
of segments. Consider an optimal path characterization which begins with an $A$ and contains another
$A$  at position $n$, elsewhere in the sequence. Then there exists an optimal
path whose entire characterization
must be identical to the (original) characterization starting at position $n$. This principle follows
directly by scaling, rotation, and merging and it can be thought of as enforcing a ``self-similarity'' property of optimal paths.
As an example, consider an optimal path of the form $(A, AS, A, A, A, A, \ldots, O)$ where $AS$ is an arbitrary subsequence. The principle implies that such a path can be replaced
by a classic spiral of the form $(A,A,A, \ldots, O)$.

Now, consider the terminal point of an optimal path, which by assumption lies on an axis and which
by our convention is represented by the first $A$ in the sequence characterizing this path.
If the next endpoint lies on an axis, then the path is a classic spiral (or can be replaced by one),
based on the principle above. So suppose this is not the case. The second endpoint cannot
be on a different face $(DF)$ by the Bad Faces Theorem (Theorem \ref{theorem:shortest}). Thus the only remaining
possibility is that the second endpoint is characterized as $SF$, that is, it lies on the interior of
one of the two adjoining faces.

Next, in any place in the sequence only a $DF$ can follow $SF$ by convexity and the Bad Faces Theorem.
After a $DF$, either an $SF$ or an $A$ may follow ($DF$ cannot follow, again by convexity). Finally,
between any two appearances of an $A$ in the sequence, the $SF$ and $DF$ sequences can be assumed
to be the same, again invoking the self-similarity principle above. Putting all of these observations together, we conclude
that apart from the classic spiral case, there are only two other general categories of paths with an infinite number
of segments:
\begin{itemize}
\item[(i)] $(A,SDF,O)$, where $(SDF)$ is an infinite subsequence of $(SF,DF)$.
\item[(ii)] $(A,SDF_{i}, A,SDF_{i}, A, SDF_{i}, \ldots, O)$, $i=1,2,3,\ldots.$
\end{itemize}
We now proceed to eliminate these two types of paths.

\textbf{Part (i).} We consider first a path of type $(A,SDF,O)$. Without loss of generality, assume that the
terminal point (the first $A$ in the sequence) is $e_3$ and the next pivot point is
$v^1 \in int(F_2)$. Now the farthest face from $v^1$ can be either $F_1$ or $F_3$
(since the point is on $F_2$, this cannot be the farthest face). Suppose $v^1$ is strictly
closer to $F_1$. Then by the Bad Faces Theorem, the next endpoint must be in $F_1$.
However, such a path can be eliminated from consideration by Lemma \ref{axis_eliminate}.
Therefore we assume that $v^1$ is closer to $F_3$ than $F_1$ and the next endpoint
in the path is in $F_3$ again by the Bad Faces Theorem. (If $v^1$ is equidistant to $F_1$
and $F_3$, then the segments to $F_1$ and $F_3$ have the same costs and we choose
the segment going to $F_3$). The next point, $v^3$ is also in the interior of $F_3$ due to our assumption
on the path type. Using arguments from the proof of the Bad Faces Theorem, it
can be shown that $v^2$ must be closer to $F_2$ than $F_1$. 
Next, if $v^3$ is closer to $F_2$ than $F_1$ then the resulting path is
of ``switchback'' form. Such a path is suboptimal by Lemma  
\ref{lem:switchback}. Thus, $v^3$ must be closer to $F_1$ than $F_2$. 
Furthermore, by Lemma \ref{parallel}, $\overline{e_{3}v^{1}}$ is rotationally parallel to
$\overline{v^{2}v^{3}}$. Hence, after $e_3$ the faces containing the endpoints
are in this order: $F_2, F_3, F_3, F_1, F_1, F_2, F_2, \ldots.$
By the usual arguments using rotation and scaling, all the $DF$ segments are rotationally
parallel. So, for
this path type, the path is an ``exotic spiral'' as depicted in the right half of Figure \ref{exotic_pic}. However, Lemma \ref{thm:newspiral} implies that such an exotic spiral is suboptimal. 

\textbf{Part (ii).} We now turn our attention to the other general type of exotic spiral, one with the characterization
 $(A,SDF_{i}, A,SDF_{i}, A, SDF_{i}, \ldots, O),$ $i \in \{1,2,3,\ldots \}$ where each $SDF_i$ is
 a sequence with $SF/DF$ segments repeated $i$ times. By symmetry and scaling arguments,
 we can assume that each of these sequences is identical. 
 Suppose that the end point $v^0$ is on axis $F_{1,2}$ and the SF is from $v^1$ on $F_1$.  By Lemma \ref{axis_eliminate}, since $v^{2}$ is on a different face than $v^1$, $v^1$ must be closer to $F_{1,3}$ than to $F_{1,2}$ and $v^2$ must be in $F_3$. The next point, $v^3$, is either on axis $F_{2,3}$ or in $F_{3}$. However, it must be closer to $F_{2,3}$ than to $F_{1,3}$. Then based on Lemma \ref{parallel}, there exists point $u$ on axis $F_{1,3}$ for which an optimal path to $u$ contains $\overline{v^{3}v^{2}}$. By scaling and symmetry, this path can be assumed to be of the $(A,SDF_{i}, A,SDF_{i}, A, SDF_{i}, \ldots, O)$ form posited for the path to $v_0$. 
 Now, by assumption, there are $i$ $SF/DF$ segments between $v^0$ and the next point on the axis
 and of course this path passes through $v_2$. Considering the optimal path to $u$, since
 it also passes through $v_2$ en route to $u$, the portion of the this path to $v_2$ can
 be replaced by the optimal path to $v_0$, up to $v_2$. This patching process forms
 another optimal path to $u$. Since there are $i$ $SF/DF$ segments between $v^0$ and the next
 axis point, there are then $i-1$ $SF/DF$ segments between $u$ and this same point.
 Therefore we have constructed an optimal path to a point on the axis ($u$) which is of the
 form $(A,SDF_{i-1}, A,SDF_{i-1}, A, SDF_{i-1}, \ldots, O)$. Hence, there must exist
 an optimal path of the same form to $v_0$. Repeating this patching process results in the
 construction of an optimal path to $v_0$ (and thus any point on any axis) of the
 form  $(A, A, A, \ldots,0)$. 
 
 It remains to be argued that there must be a finite number of segments between
in an optimal path between any two axis points. We only give an outline here. 
Consider a path with an infinite number of segments which converge to a point
$v_0$ on $F_{1,2}$. There must be an infinite
number endpoints of such segments in an $\epsilon$-ball around $v_0$. Furthermore, by
Lemma \ref{lem:switchback} there must exist an infinite subsequence of 
endpoints for which the other terminal point of the segment is in $F_3$. 
The cost of all such segments can be uniformly bounded away from zero 
(using the infimum of the cost from the $\epsilon$-ball to $F_3$, which
is strictly positive). This implies however, that the total cost of any such path
is infinite. Hence, the path cannot be optimal. 
\end{proof}

\section{An Optimal Spiral Path} \label{sec:optspiral}
In Example 2 of Section 6 of El Kharoubi et al.\ \cite{kytb11}, it is shown
that a spiral path has a lower cost than a two-piece gradual
path, for the corresponding RSVP.  Here we give a related example, and using the results of previous sections,
show that a spiral path is indeed optimal.

Let $\theta = (-1,-1,-1)'$, $\Gamma=I$ and $$ R =\left(
                                                                     \begin{array}{ccc}
                                                                       1 & 0 & \frac{3}{2} \\
                                                                       \frac{3}{2} & 1 & 0 \\
                                                                       0 & \frac{3}{2} & 1 \\
                                                                     \end{array}
                                                                   \right). $$
To establish that a spiral path is optimal, we need to undertake three steps.
First, we check that Condition 1 of Section \ref{sec:finite} holds. If so, then we know
that only gradual paths or spiral paths are optimal. Second,
using results from \cite{kytb11} we check the reflectivity characteristics of optimal
paths traversing an axis. Third, to travel to a point, say $e_3$, on an axis, we verify
that it is less costly to traverse one of the other axes and then cross a two-dimensional
face. If this is the case, then one can construct a spiral path to $e_3$ that is cheaper
than the gradual path to $e_3$ (which simply travels along the axis).

The next proposition simplifies the process of checking Condition 1 and
may be useful in producing other examples.
\begin{proposition} For a RSVP with
$r_2 = 0$ and $0< r_1 < 2$
Condition 1 holds.
\end{proposition}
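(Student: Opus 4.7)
The plan is essentially to verify that both parts of Condition 1 degenerate to trivial statements once $r_2 = 0$ is substituted in. There is no serious obstacle here; the point of the proposition is to identify a clean parametric slice on which the somewhat opaque polynomial inequality in Condition 1 is automatic, and thereby furnish a concrete family of RSVPs to which the machinery of Section \ref{sec:finite} applies.

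First I would check set membership $(r_1, r_2) \in R_f$. Recall $R_f = \{(r_1,r_2) \in \R^2_+ : r_1 > r_2,\; -1 < r_1 + r_2 < 2\}$. With $r_2 = 0$ and the hypothesis $0 < r_1 < 2$, we have $r_1, r_2 \ge 0$, strict inequality $r_1 > 0 = r_2$, and $-1 < r_1 < 2$, so all three conditions defining $R_f$ are immediate.

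Second I would substitute $r_2 = 0$ into the polynomial inequality
$$(1+r_{2}^2)(1+r_{1}^2-r_{2}-r_{1}r_{2})^{2} \geq 2(r_{1}r_{2})^{2}(1+r_{1}^{2}+r_{2}^{2}-r_{1}-r_{2}-r_{1}r_{2}).$$
The left-hand side collapses to $(1)(1+r_1^2)^2 = (1+r_1^2)^2$, which is strictly positive. The right-hand side contains the factor $(r_1 r_2)^2$, so it vanishes identically. Thus the inequality reduces to $(1+r_1^2)^2 \ge 0$, which holds trivially (with strict inequality, in fact). Combining this with the $R_f$ membership check completes the verification of Condition 1.

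The only thing to watch is that one interprets the condition exactly as stated, with the factor $(r_1 r_2)^2$ on the right-hand side; the whole point is that this factor kills the right side whenever either reflection coefficient is zero, which is what makes the $r_2 = 0$ slice an easy source of examples. Beyond that, the argument is a one-line substitution and no auxiliary lemmas are required.
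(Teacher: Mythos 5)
Your proof is correct and follows essentially the same route as the paper: verify $(r_1,0)\in R_f$ under the hypothesis $0<r_1<2$, then observe that setting $r_2=0$ reduces the inequality of Condition 1 to $(1+r_1^2)^2\ge 0$, which is trivial since the right-hand side carries the factor $(r_1r_2)^2$. Nothing further is needed.
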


\begin{proof}
Clearly $(r_1,r_2) \in R_f$ under the assumptions given.
Recall that Condition 1 is given by
\begin{equation}
(1+r_{2}^2)(1+r_{1}^2-r_{2}-r_{1}r_{2})^{2} \geq 2(r_{1}r_{2})^{2}(1+r_{1}^{2}+r_{2}^{2}-r_{1}-r_{2}-r_{1}r_{2}).
\end{equation}
For $r_2=0$ the condition reduces to $(1+r_1^2)^2 \ge 0$ which clearly holds for all real $r_1$.
\end{proof}
So, the proposition provides verification of Condition 1 for the example in this section.
Next, we use results from \cite{kytb11} to check reflectivity of the axes. In
particular we use equations (24) and (25), Remark 2, and Proposition 1 from
that paper.
Let $R_{1}=(1,3/2,0)'$, $R_{2}=(0,1,3/2)'$, and  $R_{1,2}= (R_{1}, R_{2})$.
Define $A_{1,2}=I-R_{1,2}B_{1,2}$ and $B_{1,2}=(R_{1,2}'R_{1,2})^{-1}R_{1,2}'$.
Some algebra shows that $$\frac{\|A_{1,2}\theta \|}{\|A_{1,2}e_{3}\|}B_{1,2}e_{3}-B_{1,2}\theta
\approx (0.0526, 1.5526)  > 0.$$ So optimal one-piece reflected paths confined to an axis use both corresponding reflection vectors.

Finally, we check the spiral condition for the point $e_3$. In particular either
$$\tilde{\mathcal{I}}_{1,2}(e_{3}) \geq \tilde{\mathcal{I}}^2_{\{2,3\},2}(e_{3})
\quad \mbox{or} \quad  \tilde{\mathcal{I}}_{1,2}(e_{3}) \geq \tilde{\mathcal{I}}^2_{\{1,3\},1}(e_{3})$$
must hold. We verify the first inequality.
For the parameters of our example we have $$\tilde{\mathcal{I}}_{1,2}(e_{3}) = \|A_{1,2}\theta \| \|A_{1,2}e_{3}\|- \langle A_{1,2}\theta, A_{1,2}e_{3} \rangle \approx 0.4211.$$
Next, let $u=(0.5,0,0)$. Then $$\tilde{\mathcal{I}}^2_{\{2,3\},2}(e_{3}) \leq \tilde{\mathcal{I}}_{2,3}(u)+\tilde{\mathcal{I}}_2(e_{3}-u) \approx 0.3317 <\tilde{\mathcal{I}}_{1,2}(e_{3}).$$
Therefore, for the given RSVP, the optimal path to any point on the boundary of the octant is a classic
spiral optimal path.

We can more precisely characterize this optimal spiral path. In particular, the last piece connects
the points $ke_1$ and $e_3$, where $k$, $0 < k < 1$, is the shrink factor. The optimal
value of $k$ can be calculated by defining the corresponding spiral cost as a function
of $k$:
$$f(k) = \frac{\tilde{\mathcal{I}}_{2}(e_3 - ke_1)}{1-k}. $$
Applying the data for this problem and setting $f'(k)=0$ results in the quadratic root-finding problem
 $1228123k^2 - 3690960k + 1626300 = 0$. The appropriate root is
 $k^* \approx 0.5363.$
 From this we can calculate the cost of the optimal spiral as $$\frac{\tilde{\mathcal{I}}_{2}(e_3 - k^*e_1)}{1-k^*} \approx 0.2384.$$

\section{Conclusions}
As mentioned in the introduction, this paper only provides a piece of the puzzle of the
variational problem related to large deviations for SRBM in the orthant. We have only
addressed problems with symmetric data, and even then some of the results have further
restrictions on the parameters. Although we do not provide an analytical
proof, in Liang \cite{liang12} convincing numerical evidence indicates that 
Condition 1 holds whenever the SRBM is stable and $r_1, r_2 \ge 0$. Other results 
require that the covariance matrix $\Gamma$ is the identity. The covariance
condition seems more difficult to remove, since this matrix could affect
the types of paths
that are optimal for a given SRBM. As noted earlier, it is already know that our results cannot be generalized to arbitrary (stable)
SRBM data. The example
in \cite{dupram02} which was discussed in the introduction is particularly troubling
because the reflection matrix in that example is partially rotationally symmetric
(two of the three reflection vectors exhibit rotationally symmetry).

In addition, even once one has a handle on the types of paths which are optimal,
computation and comparison of these path costs appear still requires considerable
effort. This indicates that fully solving large deviations problems in high dimensions
is likely to remain a challenge and future work in dimensions four and higher will
require examination of specialized cases and increasing mathematical creativity.

\textbf{Acknowledgments.} We would like to thank Jim Dai, Martin Day, Kasie Farlow, Michael Harrison,
and Kavita Ramanan for enlightening discussions on this problem. 
Our appreciation also goes out to the anonymous review team which 
provided excellent suggestions for improvement. 
Finally, we are indebted to Ahmed El Kharroubi
for sharing a preprint of his paper with us. 

\bibliographystyle{abbrv}
%\bibliography{has}

\section{Appendix}

\begin{proof}[Proof of Lemma \ref{lemma:matrix}]
 It can be checked that if $\Gamma^{-1}$ has the form of the lemma, then for a rotationally symmetric matrix $\Gamma$ the nine equations
in $\Gamma \Gamma^{-1} = I$ are consistent. Since matrix inverses are unique, it follows immediately that $\Gamma^{-1}$ can be written as stated in the lemma. Next, in order for $\Gamma \Gamma^{-1} = I$ to hold
we must have
\begin{eqnarray*}
\gamma_{0}+2\rho\gamma_{1} & = & 1 \qquad \mbox{and} \\
\gamma_{1}+\rho(\gamma_{0}+\gamma_{1}) & = & 0.
\end{eqnarray*}
Solving these equations yields
\begin{equation} \label{eqn:rho}
\rho=-\frac{\gamma_{1}}{\gamma_{0}+\gamma_{1}}=\frac{1-\gamma_{0}}{2\gamma_{1}},
\end{equation}
which implies
\begin{equation}  \label{eqn:gam}
2\gamma_{1}^{2}=(\gamma_{0}-1)(\gamma_{0}+\gamma_{1}).
\end{equation}
Note that $\gamma_0 = 0$ is not possible. To prove $\gamma_0 > \gamma_1$ we examine
four cases.

\begin{enumerate}
\item If $\gamma_{0}>0$ and  $\gamma_{1} \leq 0$ then the result follows immediately.
\item Suppose $\gamma_{0}>0, \gamma_{1}>0$ and $\gamma_{0} \le  \gamma_{1}$. Then we have
$$(\gamma_{0}-1)(\gamma_{0}+\gamma_{1}) \le (\gamma_{1}-1)(\gamma_{1}+\gamma_{1})<2\gamma_{1}^{2}.$$ This contradicts (\ref{eqn:gam}).
\item Suppose $\gamma_{0}<0, \gamma_{1} \leq 0$ and $\gamma_{0} \le \gamma_{1}$.
Then we have
$$(\gamma_{0}-1)(\gamma_{0}+\gamma_{1}) \ge (\gamma_{1}-1)(\gamma_{1}+\gamma_{1})>2\gamma_{1}^{2}.$$
This again contradicts (\ref{eqn:gam}).
\item Suppose finally that  $\gamma_{0}<0, \gamma_{1}>0$ and  $\gamma_{0} \le \gamma_{1}$.
Since $\rho < 1$ by definition, (\ref{eqn:rho}) implies that $-2\gamma_{1}+1 < \gamma_0 < 0$.
Solving (\ref{eqn:gam}) gives
$$\gamma_{0}=\frac{1-\gamma_{1}-\sqrt{9\gamma_{1}^{2}+2\gamma_{1}+1}}{2}<\frac{1-\gamma_{1}-3\gamma_{1}}{2}
=-2\gamma_{1}+1/2,$$
which contradicts $-2\gamma_{1}+1 < \gamma_0$. In solving (\ref{eqn:gam}) we take the smaller
root, since we must have $\gamma_0 < 0$.
\end{enumerate}
Thus, by contradiction, we have established $\gamma_{0} > \gamma_{1}.$
\end{proof}

\begin{proof}[Proof of Lemma \ref{lem:abc}]
It can be checked that if $R^{-1}$ has the form of the lemma, then for a rotationally symmetric matrix $R$ the nine equations
in $R R^{-1} = I$ are consistent. Since matrix inverses are unique, it follows immediately that $R^{-1}$ can be written as stated in the lemma.
Also, from $R R^{-1} = I$, we have
\begin{eqnarray*}
a+r_{2}c+r_{1}b & = & 1 \\
b+r_{2}a+r_{1}c & = & 0 \\
c+r_{2}b+r_{1}a & = & 0.
\end{eqnarray*}
Summing these equations gives the claimed equality.
\end{proof}

\begin{proof}[Proof of Theorem \ref{pands}]
Lemma \ref{comps} states that
$R$ being completely-$\cal{S}$ is equivalent to
\begin{equation} \label{s}
r_{1}+r_{2}+1 > 0.
\end{equation}
By definition, the conditions
\begin{eqnarray}
1-r_{1}r_{2} & > & 0 \label{p1} \\
1+r_{1}^{3}+r_{2}^{3}-3r_{1}r_{2} & > & 0 \label{p2}
\end{eqnarray}
are necessary
and sufficient for $R$ to be a $\cal{P}$-matrix.
We prove that (\ref{s}) is equivalent to (\ref{p1}) and
(\ref{p2}) by partitioning the possible values of $r_1$ and $r_2$.

\begin{enumerate}
\item Suppose that either $r_{1}=0$ or $r_{2}=0$.
We prove the case in which $r_2=0$, the other case is analogous.
When $r_2=0$ (\ref{s}) reduces to $r_1+1>0$, (\ref{p1}) is trivially
satisfied, and (\ref{p2}) reduces to $r_1^3+1>0$. The first inequality
is equivalent to the last, establishing the result for this case.

\item Suppose $r_{1},r_{2} > 0$. It is easy to see that (\ref{s}) always
holds in this case. Furthermore, $r_1+r_2 <2$ implies
$$\left(\frac{r_{1}+r_{2}}{2}\right)^{2} < 1.$$
The arithmetic-geometric mean (AGM) inequality gives
\begin{equation}
r_{1}r_{2} \leq \left(\frac{r_{1}+r_{2}}{2}\right)^{2},
\end{equation}
and thus (\ref{p1}) always holds.
Invoking the AGM inequality again yields
$$ r_{1}r_{2} = \sqrt[3]{r_{1}^{3}r_{2}^{3}} \le \frac{1+r_{1}^{3}+r_{2}^{3}}{3}.$$
Recall that equality in the AGM inequality holds iff the three terms are equal.
Equality of the terms in this case implies $r_1=r_2=1$ which is not possible due
to $r_1+r_2 <2$. Therefore (\ref{p2}) automatically holds.

\item Suppose $r_{1},r_{2} < 0$.
Note that
\begin{equation}  \label{pos}
1+r_{1}^{3}+r_{2}^{3}-3r_{1}r_{2} = (r_{1}+r_{2}+1)(1+r_{1}^{2}+r_{2}^{2}-r_{1}-r_{2}-r_{1}r_{2})
\end{equation} and
$$1+r_{1}^{2}+r_{2}^{2}-r_{1}-r_{2}-r_{1}r_{2} > 0,$$
when $r_{1},r_{2} < 0$. Therefore, in this case,
(\ref{s}) and (\ref{p2}) are equivalent and we need only show that (\ref{s}) implies
(\ref{p1}). Note then that $r_{1}+r_{2}+1 > 0$, implies that $r_{1} > -1$ and  $r_{2} > -1$.
Given that both $r_1$ and $r_2$ are also negative this yields $r_{1}r_{2} <1$.

\item Suppose $r_{1} > 0$ and $r_{2} < 0$ (the case $r_1<0$, $r_2>0$ is analogous).
It is obvious that (\ref{p1}) always holds in this case and so we need only
show that (\ref{s}) and (\ref{p2}) are equivalent.
Consider again the last term in (\ref{pos}):
$$1+r_{1}^{2}+r_{2}^{2}-r_{1}-r_{2}-r_{1}r_{2} = (r_{1}-r_{2})^{2}-(r_{1}-1)(1-r_{2}).$$
Note that when $r_1 < 1$ this term is positive as can be seen from the right-hand side
above. When $r_1 \ge 1$ we have $r_{1}-r_{2} > r_{1}-1 \ge 0$ and $r_{1}-r_{2} \ge 1-r_{2} > 0$
and again the right-sand side above is clearly positive. This fact implies that
(\ref{s}) and (\ref{p2}) are equivalent, as argued in Case 3.

\end{enumerate}
\end{proof}

\begin{lemma}[Reflected Convexity] \label{refconvex}
Consider a section of a feasible triple $(x,y,z)$ in which the path $z$ consists of
segments $\overline{v^1v^2}$ and $\overline{v^2v^3}$, with
$v^1,v^2, v^3 \in F_j$ where $j \in \{1,2,3\}$. Suppose that
$\overline{v^1v^2}$ is a reflected segment and $\overline{v^2v^3}$
is direct.  Then
there exists a linear reflected path from $v_1$ to $v_3$ whose cost
is no greater than the original path.
\end{lemma}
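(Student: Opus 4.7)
The plan is to construct an explicit replacement path that is (i) a single linear reflected segment from $v^1$ to $v^3$ and (ii) whose integrand is a convex combination of the integrands on the two original segments, so that convexity of $v \mapsto \tfrac{1}{2}\|v-\theta\|^2$ does the rest.

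First I would invoke the standard convexity result (Lemma \ref{lem:opp}, part 1) on each of the two pieces separately so that I may assume $\dot z$, $\dot y$, and $\dot x$ are constant on each of the two time intervals of lengths $T_1$ and $T_2$. Call the associated constant rates $\dot z_i, \dot y_i, \dot x_i = \dot z_i - R\dot y_i$ for $i=1,2$. Since the first piece is reflected within $F_j$ the only possibly nonzero component of $\dot y_1$ is the $j$-th, and since the second piece is direct we have $\dot y_2 = 0$, so $\dot x_2 = \dot z_2$.

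Next I would define the candidate replacement triple $(\tilde x, \tilde y, \tilde z)$ on $[0, T_1+T_2]$ by the constant rates
\begin{equation*}
\dot{\tilde z} \;=\; \frac{v^3 - v^1}{T_1+T_2}, \qquad \dot{\tilde y} \;=\; \frac{T_1}{T_1+T_2}\,\dot y_1, \qquad \dot{\tilde x} \;=\; \dot{\tilde z} - R\,\dot{\tilde y}.
\end{equation*}
A short check using $\dot z_1 T_1 = v^2-v^1$ and $\dot z_2 T_2 = v^3-v^2$ gives the key identity
\begin{equation*}
\dot{\tilde x} \;=\; \frac{T_1 \dot x_1 + T_2 \dot x_2}{T_1+T_2},
\end{equation*}
so $\dot{\tilde x}$ is a convex combination of $\dot x_1$ and $\dot x_2$.

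Then I would verify that $(\tilde x, \tilde y, \tilde z)$ is a valid $R$-regulation: the path $\tilde z$ is a linear interpolation between two points of $F_j$ so it stays in $F_j$, in particular $\tilde z \ge 0$; $\tilde y$ is linear with non-negative slope, starting at $0$; and the Skorohod complementarity $\int \tilde z_i\, d\tilde y_i = 0$ holds componentwise because $\tilde y$ can only increase in the $j$-th coordinate and $\tilde z_j \equiv 0$. This makes the new piece a linear reflected segment from $v^1$ to $v^3$ as required.

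Finally, Jensen's inequality applied to the convex map $v \mapsto \tfrac12\|v-\theta\|^2$ gives
\begin{equation*}
\tfrac12\|\dot{\tilde x}-\theta\|^2 \;\le\; \tfrac{T_1}{T_1+T_2}\,\tfrac12\|\dot x_1-\theta\|^2 + \tfrac{T_2}{T_1+T_2}\,\tfrac12\|\dot x_2-\theta\|^2,
\end{equation*}
and integrating over $[0, T_1+T_2]$ yields that the cost of $\tilde x$ is at most the sum of the two original-piece costs. There is no real obstacle here; the only subtlety is recognizing that the right convex combination to take for $\dot{\tilde y}$ is the weighted one $(T_1/(T_1+T_2))\dot y_1$, which is exactly what makes $\dot{\tilde x}$ itself come out as the time-weighted average of $\dot x_1$ and $\dot x_2$ so that Jensen can be applied directly.
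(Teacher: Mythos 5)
Your construction is correct and is essentially the paper's own argument: both proofs straighten the driving path $x$ (whose total displacement is $\dot{x}^{1}T^{1}+\dot{x}^{2}T^{2}$), re-attach the regulation by scaling $\dot{y}^{1}$ so that the resulting triple is a feasible linear reflected segment from $v^1$ to $v^3$, and then invoke convexity of $\tfrac12\|\cdot-\theta\|^2$ to bound the cost. The only cosmetic difference is that the paper passes through the translated endpoints $u^2,u^3$ and the optimal direct costs $\tilde{\mathcal{I}}_{0}$ (re-optimizing the travel time $T^3$), whereas you keep the total time $T_1+T_2$ fixed and apply Jensen directly; both yield the same inequality.
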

\begin{proof}
Without loss of generality, assume that $j=1$. Let $(x^1(t), y^1(t), z^1(t))$ be the  triple
corresponding to the segment $\overline{v^1v^2}$ with $T=T^1$. Similarly, let
$(x^2(t), y^2(t), z^2(t))$ be the triple corresponding to $\overline{v^2v^3}$ with $T=T^2$.
Note that $(x^1(t), y^1(t), z^1(t)) = (\dot{x}^1, \dot{y}^1, \dot{z}^1)t$ and $(x^2(t), y^2(t), z^2(t))=(\dot{x}^2, \dot{y}^2, \dot{z}^2)t.$ Further denote $\dot{x}^1 = (x^1_{1}, x^1_{2},x^1_{3})'$. Similar notation is used for the other variables. Notice that $\dot{y}^1 = (y^1_{1}, 0, 0)'$ and $\dot{y}^2 = 0.$

By our assumptions on the segments, we have  $\dot{x}^1 +R \dot{y}^1 = \dot{z}^1$, $\dot{x}^2 = \dot{z}^2$,
$\dot{z}^1_{1} = 0$, $\dot{x}^1_{1} < 0$ and $\dot{x}^2_{1} = 0.$ 
By translation, we set $z^{1}(T^{1}) = v^{2}-v^{1}$ and $z^{2}(T^{2}) = x^{2}(T^{2}) = v^{3}-v^{2}.$  Also, define points $u^{2} = v^{1}+x^{1}(T^{1})$ and $u^{3} = u^{2}+x^{2}(T^{2})$. Notice that these two points are not in the interior of the octant. Based on convexity, $$\tilde{\mathcal{I}}_{0}(u^{2},u^3)+\tilde{\mathcal{I}}_{0}(v^{1},u^2) \geq \tilde{\mathcal{I}}_{0}(v^{1},u^3).$$
Let $x^3(t)$ be optimal to $\tilde{\mathcal{I}}_{0}(v^{1},u^3)$ with corresponding $T = T^3$, where $x^{3}(t) = \dot{x}^{3}t.$
It is clear that $\dot{x}^{3}T^{3} = u^{3}-v^{1} = \dot{x}^{1}T^{1}+\dot{x}^{2}T^{2}.$ Define $\dot{y}^{3} = \frac{\dot{y}^{1}T^{1}}{T^{3}}$ and $y^{3}(t) = \dot{y}^{3}t.$ Also define $z^{3}(t) = z^{3}(t)+Ry^{3}(t).$ So $z^{3}(T^{3}) = v^{3}-v^{1}.$ Thus $(x^{3}(t), y^{3}(t), z^{3}(t))$ is a feasible triple for $\tilde{\mathcal{I}}_{1}(v^{1},v^3).$
Therefore, $$\tilde{\mathcal{I}}_{1}(v^{1},v^3) \leq \frac{1}{2} \int_0^{T^3} || \dot x^{3}(t) - \theta||^2\, dt = \tilde{\mathcal{I}}_{0}(v^{1},u^3) \leq \tilde{\mathcal{I}}_{0}(u^{2},u^3)+\tilde{\mathcal{I}}_{0}(v^{1},u^2) = \tilde{\mathcal{I}}_{1}(v^{1},v^2) + \tilde{\mathcal{I}}_{0}(v^{2},v^3),$$
which establishes the result.

\end{proof}

\begin{proof}[Proof of Lemma \ref{lem:case3}]

\textbf{Part (a).} 
First, without loss of generality we set $v=(v_1,0,1)$.
We prove that if Condition 1 holds, then
 $\tilde{\mathcal{I}}_2(v) > \tilde{\mathcal{I}}_{2}(e_{3}).$
 Of course, this immediately implies that 
 $\tilde{\mathcal{I}}_2(v) > \tilde{\mathcal{I}}_{2}(ke_{3}),$
 for all $k \in [0,1]$.
 
For all non-negative $v_1$, define the function
$$G(v_1):=\tilde{\mathcal{I}}_2(v)=\|Av\| \|A\theta\|-\langle Av, A\theta \rangle,$$
where $A=I-R_{2}B$, $B=(R_{2}'R_{2})^{-1}R_{2}'$, and $R_{2}=(r_{2},1,r_{1})'$. It can
be checked that $G(\cdot)$ is strictly convex on $(0,1)$.
Therefore, to prove $\tilde{\mathcal{I}}_2(v) > \tilde{\mathcal{I}}_2(e_{3})$
for $v_1 >0$,
 it
is enough to show that $\frac{\partial_+  G(v_1)}{\partial v_1}|_{v_1=0} \ge 0$. Some algebra yields
$$\left. \frac{\partial_+ G(v_1)}{\partial v_1}\right|_{v_1=0}=\frac{1}{2}\frac{\|A\theta\|}{\|Ae_{3}\|}(A_{31}+A_{13})-(A\theta)_{1}.$$
Note that $A_{31}+A_{13} \leq 0$ and $(A\theta)_{1} \leq 0$ in $R_{f}$.
So, to prove the non-negativity of the derivative, it is sufficient to show that
\begin{equation} \label{derv}
(A\theta)_{1}^{2} \geq  \frac{1}{4}\left[\frac{\|A\theta\|}{\|Ae_{3}\|}(A_{31}+A_{13})\right]^{2}.
\end{equation}
Next, we have
 $$\left(\frac{\|A\theta\|}{\|Ae_{3}\|}\right)^2 = \frac{2(1+r_{1}^{2}+r_{2}^{2}-r_{1}-r_{2}-r_{1}r_{2})}{1+r_{2}^2}\theta_{0}^{2},$$ and $$(A\theta)_{1}=\frac{1+r_{1}^2-r_{2}-r_{1}r_{2}}{1+r_{1}^2+r_{2}^2}\theta_{0}.$$
Plugging these equalities into (\ref{derv}) yields the condition
\begin{equation} \label{rfinal}
(1+r_{2}^2)(1+r_{1}^2-r_{2}-r_{1}r_{2})^{2} \geq 2(r_{1}r_{2})^{2}(1+r_{1}^{2}+r_{2}^{2}-r_{1}-r_{2}-r_{1}r_{2}).
\end{equation}
In summary, if (\ref{rfinal}) holds, then $\frac{\partial_+ G(v_1)}{\partial v_1}|_{v_1=0} \ge 0$ which in turn
implies $\tilde{\mathcal{I}}_2(v) > \tilde{\mathcal{I}}_2(e_{3})$.

\textbf{Part (b).} The claim is that if the conditions of the lemma statement hold then $\tilde{\mathcal{I}}_{0}(v,v') \geq \tilde{\mathcal{I}}_1(ke_{3},v')$ for some
$k \in [0,1]$.
First consider the case when $ k =1$.
Since $\tilde{\mathcal{I}}_{0}(v,v')$ and $\tilde{\mathcal{I}}_1(e_{3},v')$ are both proportional to $\theta_{0}$ it is enough to verify this case when $\theta_{0} =-1$. We have that $\tilde{\mathcal{I}}_{0}(v,v') = \frac{1}{2}\|\dot{x}^{*}(t)-\theta\|^{2} T^{*}$ where
$$T^{*} = \frac{\|v'-v\|}{\|\theta \|} = \sqrt {\frac{(v_{1})^2+(v_{2})^{2}+(v_{3}-1)^{2}}{3}}$$ and $x^{*}(t) =x^{*}t=t(x_{1}^{*}, x_{2}^{*}, x_{3}^{*})'$ for $t \in [0,T]$.
We construct a feasible reflected path contained in $F_1$ from $e_3$ to $v'$ with a cost
$H_{\tilde{x}}(e_3,v')$
less than or equal to $\tilde{\mathcal{I}}_{0}(v,v')$. This construction then implies
$\tilde{\mathcal{I}}_{0}(v,v') \geq \tilde{\mathcal{I}}_1(e_{3},v').$
%Below, we construct feasible triples with two different ``speeds.'' Of course, in the search
%for a lower cost path, one could examine all possible feasible speeds to generate
%a weaker sufficient condition than that which appears in the lemma, but the cases we
%consider turn out to be sufficient for our purposes.

Denote a feasible triple from $e_{3}$ to $v'$ by $(\tilde{x}(t),\tilde{y}(t),\tilde{z}(t))$ on $[0,\tilde{T}]$.   Let $\tilde{T}=T^{*}$, $\tilde{z}(t)=\tilde{z}t=t(0,\tilde{z}_{2},\tilde{z}_{3})'$ and $\tilde{y}(t) = t (\tilde{y}_1,0,0)'$ for some $\tilde{y}_1 \geq 0$. It is clear that $\tilde{z}_{2}=x_{2}^{*}$ and $\tilde{z}_{3}=x_{3}^{*}$. The goal now is to determine if there exists a $\tilde{y}_1 \geq 0$ such that $$\tilde{\mathcal{I}}_{0}(v,v')-H_{\tilde{x}}(e_{3},v') \geq 0.$$
Plugging in $\tilde{x}(t)=\tilde{z}(t)-R\tilde{y}(t)$, $x^{*}T^{*} = v'-v$, and writing $\bar{y}=\tilde{y}_1 T^{*}$ we see that the inequality above is equivalent to
\begin{equation}
\begin{aligned} \label{old2}
&(T^{*}-v_{1})^{2}+(T^{*}+v_{2})^{2}+(T^{*}+v_{3}-1)^{2} \\
&-[(T^{*}-\bar{y})^2+(T^{*}+v_{2}-r_{1}\bar{y})^{2}+(T^{*}+v_{3}-1-r_{2}\bar{y})^{2}] \geq 0.
\end{aligned}
\end{equation}

So, for given problem data and points $v$ and $v'$
if there exists a $\bar{y} \geq 0$ such that (\ref{old2}) is satisfied, then the desired feasible path construction can be achieved.
Notice that the left-hand side (LHS) in this equation is a concave, quadratic function of
$\bar{y}$. So to prove the desired inequality, it is necessary
that this function has a non-negative maximum which is achieved at a
non-negative value.

In (\ref{old2}), the maximum value of  the LHS is reached at $$\bar{y}^{*} = \frac{(1+r_{1}+r_{2})T^{*}+r_{1}v_{2}+r_{2}(v_{3}-1)}{1+r_{1}^{2}+r_{2}^{2}}$$ and this maximum is achieved at a non-negative value when
\begin{equation} \label{old4}
[(1+r_{1}+r_{2})T^{*}+r_{1}v_{2}+r_{2}(v_{3}-1)]^{2}-(1+r_{1}^{2}+r_{2}^{2})(2T^{*}v_{1}-v_{1}^{2}) \geq 0.
\end{equation}
When $v_{3} \geq 1$ it is easy to see that $\bar{y}^{*} \geq 0$. Considering now
the LHS of (\ref{old4}) we have that $$ \mathrm{LHS} \geq (1+r_{1}+r_{2})^{2}(T^*)^{2} -(1+r_{1}^{2}+r_{2}^{2})(T^*)^{2}=(T^*)^{2}(2r_{1}+2r_{2}+2r_{1}r_{2}) \geq 0.$$
So, when $v_3 \geq 1$ we have verified that ${\mathcal{I}}_{0}(v,v') \geq \tilde{\mathcal{I}}_1(e_{3},v'),$ i.e., this part of the lemma holds with $k=1$.

Next, for the case $v_3 < 1$, we prove that if the lemma does not hold
for $k=1$ it must hold for some $k \in [0,1)$. 
When $v_3 < 1$, $[(1+r_{1}+r_{2})T^{*}+r_{1}v_{2}+r_{2}(v_{3}-1)]^{2} \geq [(1+r_{1}+r_{2})T^{*}]^{2}$, which was used to establish (\ref{old4}) in
the $v_3 \ge 1$ case, no longer holds.
However, since $$T^{*}= \sqrt {\frac{(v_{1})^2+(v_{2})^{2}+(v_{3}-1)^{2}}{3}} \geq \sqrt {\frac{(v_{3}-1)^{2}}{3}} = \frac{(1-v_{3})}{\sqrt{3}}$$ and $1+r_1+r_2 > 3r_2$, it is clear that $$(1+r_{1}+r_{2})T^{*}+r_{2}(v_{3}-1) > \frac{3r_2}{\sqrt{3}}(1-v_3) > 0. $$ Thus the following related inequality does hold: $$[(1+r_{1}+r_{2})T^{*}+r_{1}v_{2}+r_{2}(v_{3}-1)]^{2} >
[(1+r_{1}+r_{2})T^{*} + r_{2}(v_{3}-1)]^{2}.$$
Using this result, we can now relax (\ref{old4}). The resulting inequality, \begin{equation*}
[(1+r_{1}+r_{2})T^{*}+r_{2}(v_{3}-1)]^{2}-(1+r_{1}^{2}+r_{2}^{2})(2T^{*}v_{1}-v_{1}^{2}) \geq 0,
\end{equation*}
is now used to obtain sufficient conditions on $r_1$ and $r_2$ to guarantee  (\ref{old4}).
Now, the inequality directly above 
is equivalent to
\begin{equation} \label{new1}
\left[(1+r_{1}+r_{2})+r_{2}\frac{(v_{3}-1)}{T^{*}}\right]^{2}+(1+r_{1}^{2}+r_{2}^{2})\left(-2\frac{v_{1}}{T^{*}}+\left(\frac{v_{1}}{T^{*}}\right)^{2}\right)  = (a-r_2 d)^2 +b(c^2 -2c) \geq 0,
\end{equation}
where $a:=1+r_1+r_2$, $b: = 1+r_1^2 +r_2^2$, $c: = \frac{v_{1}}{T^{*}}$, and $d := \frac{1-v_{3}}{T^{*}}.$ When (\ref{new1}) holds then (\ref{old4}) and thus (\ref{old2}) is satisfied. 

Now we turn to the cases for which $ 0 \leq k < 1$, still assuming $v_3 < 1$.
Construct a feasible triple $(\hat{x}(t),\hat{y}(t),\hat{z}(t)) = t(\hat{x},\hat{y},\hat{z})$ and $\hat{T} = T^{*}$ from $v'$ to $ke_3$ with cost
$H_x(ke_{3},v')$. 
Notice that $x_{1}^{*} = -\frac{v_1}{T^{*}}$ and  $x_{3}^{*} =\frac{v_3 - 1}{T^{*}}.$
Let $\hat{x}_1 = x_{1}^{*}$ and $\hat{x}_3= x_{3}^{*}.$ As $\hat{z}(\hat{T}) = T^{*}\hat{z} = (0, v_{2}, v_{3}-k)'$ it is clear that $\hat{y}_1 = \frac{v_1}{T^{*}} > 0$. Also since $\hat{z} = \hat{x} +R \hat{y}$ we have $$\hat{z}_3 = \frac{v_3 - 1}{T^{*}} +r_{2}\frac{v_1}{T^{*}} = \frac{v_3 - k}{T^{*}}.$$ So $ k = 1-r_2 v_1$. Recalling the stability condition $r_1 + r_2 < 2$ and the assumption of the lemma statement that $r_2 < r_1$ we have $r_2 <1$. Since $v_1 < 1$ also it is clear that $ 0 < k < 1$.
\\
From $\hat{z}_2 = \frac{v_2}{T^{*}}  = \hat{x}_2 +r_{1}\hat{y}_1$ we have $\hat{x}_2 =\frac{v_2-r_1 y_1 }{T^{*}}.$ Since $${\mathcal{I}}_{0}(v,v') = \frac{1}{2}T^{*}[(x_{1}^{*} +1)^2 + (x_{2}^{*} +1)^2 + (x_{3}^{*}+1)^2]$$ and
$$\tilde{\mathcal{I}}_1(ke_{3},v') \leq \frac{1}{2}\hat{T}[(\hat{x}_1 +1)^2 + (\hat{x}_2+1)^2 + (\hat{x}_3 +1)^2]
 = \frac{1}{2}T^{*}[(x_{1}^{*} +1)^2 + (\hat{x}_2 +1)^2 + (x_{3}^{*} +1)^2],$$ it is enough to show 
$$ {\mathcal{I}}_{0}(v,v') - H_x(ke_{3},v') = \frac{1}{2}T^{*}[(x_{2}^{*} +1)^2 -(\hat{x}_2+1)^2] \geq 0,$$ which reduces to
\begin{equation} \label{new2}
(x_{2}^{*} +1)^2 \geq (\hat{x}_2+1)^2.
\end{equation}
 Since $x_{2}^{*} = \frac{v_2}{T^{*}}$ and $\hat{x}_2 =\frac{v_2-r_1 y_1 }{T^{*}}$, (\ref{new2}) is equivalent to
\begin{equation} \label{new3}
 0 \leq r_1 \frac{v_1}{T^{*}}  = r_1 c \leq 2\left(\frac{v_2}{T^{*}} +1\right).
\end{equation}
When this condition is satisfied the inequality ${\mathcal{I}}_{0}(v,v') \geq  
\tilde{\mathcal{I}}_1(ke_{3},v')$ holds for $ k = 1-r_2 v_1$.
If (\ref{new2}) does not hold, then  $$2 \leq 2\left(\frac{v_2}{T^{*}} +1\right) \leq r_1 c \leq 2c$$ which means that $c \geq 1$.
On the other hand, since $$\frac{v_1}{T^{*}} = \frac{v_1}{\sqrt {\frac{(v_{1})^2+(v_{2})^{2}+(v_{3}-1)^{2}}{3}}} \leq \frac{v_1}{\sqrt {\frac{(v_{1})^2}{3}}} = \sqrt{3},$$ 
(\ref{new3}) and thus (\ref{new2})  can be violated only if $r_1 \geq \frac{2}{\sqrt{3}}$. 
\\
Summarizing the arguments so far, we have that if (\ref{new1}) holds then
the lemma is true with $k=1$ and if (\ref{new2}) holds, then the lemma is
true for some $k \in [0,1)$. As a final step, we prove by contradiction that (\ref{new1}) and 
(\ref{new2}) cannot both be false. So, assume that both conditions are violated. 
 If (\ref{new2}) does not hold then $c \geq 1$ and $r_1 \geq \frac{2}{\sqrt{3}}$. As $$ d = \frac{1-v_{3}}{T^{*}} \leq \frac{1-v_{3}}{\sqrt {\frac{(1-v_{3})^2}{3}}} = \sqrt{3},$$ we have $ a-r_2 d \geq 1+r_1+r_2 - \sqrt{3}r_2 > 0.$ Hence $(a-r_2 d)^2$ is decreasing in $d$. Also $b(c^2 -2c)$ is increasing in $c$ when $c > 1.$
When $c = 1$, $d$ reaches its minimum of $\sqrt{2}$. So 
\begin{equation}
\begin{aligned}
&(a-r_2 d)^2 +b(c^2 -2c) > (a-\sqrt{2}r_2)^2-b
= [1+r_1-(\sqrt{2}-1)r_2]^2-(1+r_1^2+r_2^2) \\
> &[1+r_1-(\sqrt{2}-1)(2-r_1)]^2-(1+r_1^2+(2-r_1)^2).
\end{aligned}
\end{equation}
To violate (\ref{new1}) it is necessary to have $$[1+r_1-(\sqrt{2}-1)(2-r_1)]^2-(1+r_1^2+(2-r_1)^2) = (6\sqrt{2}-4)r_1-12(\sqrt{2}-1) < 0$$ which is equivalent to $$r_1 < \frac{12(\sqrt{2}-1)}{6\sqrt{2}-4}.$$ 
However, the right-hand side is smaller than $2/\sqrt{3}$. Since 
$r_1 \geq 2/\sqrt{3}$ is a necessary condition for (\ref{new2}) to be false,
we have reached a contradiction. 
\end{proof}

\end{document}